\newtheorem{proposition}{Proposition}[section]
\newtheorem{lemma}[proposition]{Lemma}
\newtheorem{corollary}[proposition]{Corollary}
\newtheorem{theorem}[proposition]{Theorem}
\theoremstyle{definition}
\newtheorem{definition}[proposition]{Definition}
\newtheorem{remark}[proposition]{Remark}
\numberwithin{equation}{section}
\newtheorem*{example*}{Example}
\begin{document}

\begin{center}
\LARGE
\textbf{A sum-bracket theorem for simple Lie algebras}
\bigskip\bigskip

\large
Daniele Dona
\bigskip

\normalsize
Einstein Institute of Mathematics, The Hebrew University of Jerusalem

Edmond J. Safra Campus Givat Ram, Jerusalem 9190401, Israel

\texttt{daniele.dona@mail.huji.ac.il}
\bigskip\medskip
\end{center}

\begin{minipage}{110mm}
\small
\textbf{Abstract.} Let $\mathfrak{g}$ be an algebra over $K$ with a bilinear operation $[\cdot,\cdot]:\mathfrak{g}\times\mathfrak{g}\rightarrow\mathfrak{g}$ not necessarily associative. For $A\subseteq\mathfrak{g}$, let $A^{k}$ be the set of elements of $\mathfrak{g}$ written combining $k$ elements of $A$ via $+$ and $[\cdot,\cdot]$.

We show a ``sum-bracket theorem'' for simple Lie algebras over $K$ of the form $\mathfrak{g}=\mathfrak{sl}_{n},\mathfrak{so}_{n},\mathfrak{sp}_{2n},\mathfrak{e}_{6},\mathfrak{e}_{7},\mathfrak{e}_{8},\mathfrak{f}_{4},\mathfrak{g}_{2}$: if $\mathrm{char}(K)$ is not too small, we have growth of the form $|A^{k}|\geq|A|^{1+\varepsilon}$ for all generating symmetric sets $A$ away from subfields of $K$. Over $\mathbb{F}_{p}$ in particular, we have a diameter bound matching the best analogous bounds for groups of Lie type \cite{BDH21}.

As an independent intermediate result, we prove also an estimate of the form $|A\cap V|\leq|A^{k}|^{\dim(V)/\dim(\mathfrak{g})}$ for linear affine subspaces $V$ of $\mathfrak{g}$. This estimate is valid for all simple algebras, and $k$ is especially small for a large class of them including associative, Lie, and Mal'cev algebras, and Lie superalgebras.
\medskip

\textbf{Keywords.} growth in algebras, Lie algebras, non-associative algebras, sum-product theorem, diameter.
\medskip

\textbf{MSC2010.} 17B20, 17B70, 17D10, 05E16, 11B75.
\end{minipage}
\medskip

\section{Introduction}

The concept of \textit{growth} of a set inside an algebraic structure has been extensively investigated. Given a set $X$ inside some structure $G$ and an integer $k\geq 1$, how large is the set of elements of $G$ that can be written using the operations of $G$ and at most $k$ elements of $X$? Cases that have been studied include:
\begin{itemize}
\item abelian groups, starting with results on $\mathbb{Z}$ by Freiman (see the monograph \cite{Fre73}) and Ruzsa \cite{Ruz94}, which led to the development of \textit{sumset theory} and of results for arbitrary abelian groups as in \cite{GR07};
\item fields, starting with Erd\H{o}s and Szemer\'edi \cite{ES83}, who proved for sets of integers the first \textit{sum-product theorem} (see \S\ref{se:prelfields} for a few examples);
\item finite simple groups, chiefly with \textit{Babai's conjecture} \cite[Conj.~1.7]{BS88}, with many partial results dependent on the Classification of Finite Simple Groups (CFSG), such as \cite{BGT11} \cite{HS14} \cite{PS16} \cite{BDH21};
\item general non-abelian groups, for instance with the \textit{Helfgott-Lindenstrauss conjecture} \cite[\S 1.1]{GH14}, solved for all groups \cite[Thm.~1.6]{BGT12} in a weak quantitative sense, and more strongly in the recent preprint \cite{EMPS21} for $\mathrm{GL}_{n}(K)$ with bounded $n$;
\item rings, where sum-product theorems also apply to a certain extent \cite{Tao09}.
\end{itemize}
In all the cases above, the lesson is the same: for any set $X$, either $X$ grows quickly, or there are constraints on its structure.

In the present paper we consider the problem of growth in yet another class of objects, namely \textit{non-associative algebras}. Having two operations at our disposal, we would expect to have growth as in the case of sum-product theorems for fields and rings.
It turns out that this expectation is correct for simple Lie algebras $\mathfrak{g}$ of the form $\mathfrak{sl}_{n},\mathfrak{so}_{n},\mathfrak{sp}_{2n},\mathfrak{e}_{6},\mathfrak{e}_{7},\mathfrak{e}_{8},\mathfrak{f}_{4},\mathfrak{g}_{2}$ (\textit{classical Lie algebras}, see \S\ref{se:generic} for the choice of nomenclature), over a field $K$ with the mild restriction that $\mathrm{char}(K)$ is either $0$ or $\geq 3\dim(\mathfrak{g})$: in this case, the sets do indeed have exponential growth depending only on $\dim(\mathfrak{g})$, and the only obstacles lie in the structure of $K$. We call our result \textit{sum-bracket theorem}, in analogy with sum-product theorems.

Furthermore, we provide as an intermediate result a \textit{dimensional estimate}, which holds not only for Lie algebras, but for all finite-dimensional simple algebras $\mathfrak{g}$. In brief, for any $d$-dimensional subspace $V$ of $\mathfrak{g}$, a set $A$ has at most roughly $|A^{k}|^{\frac{d}{\dim(\mathfrak{g})}}$ elements lying on $V$, for some power $A^{k}$ of $A$. Moreover, $k$ is particularly small for a large class of algebras that encompasses \textit{associative algebras}, \textit{Lie algebras}, \textit{Mal'cev algebras}, and \textit{Lie superalgebras} as particular cases.

\subsection{Results}

We work often with general (finite-dimensional) algebras $\mathfrak{g}$: $(\mathfrak{g},+,[\cdot,\cdot])$ is an \textit{algebra} (over $K$) if $(\mathfrak{g},+)$ is a $K$-vector space and $[\cdot,\cdot]:\mathfrak{g}\times\mathfrak{g}\rightarrow\mathfrak{g}$ is bilinear. We do not require the algebra $\mathfrak{g}$ to be associative. For any $X,Y\subseteq\mathfrak{g}$, we write
\begin{align*}
X+Y& =\{x+y|x\in X,y\in Y\}, & [X,Y] & =\{[x,y]|x\in X,y\in Y\}.
\end{align*}
We also define recursively
\begin{align*}
X^{0} & =\{0\}, & X^{1} & =X, & X^{k} & =\bigcup_{0<j<k}\left((X^{j}+X^{k-j})\cup[X^{j},X^{k-j}]\right).
\end{align*}
A set $X\subseteq\mathfrak{g}$ is \textit{symmetric} if it is finite, $0\in X$, and $X=-X$. If $X$ is symmetric, the \textit{algebra generated by $X$} is $\langle X\rangle=\bigcup_{j=0}^{\infty}X^{j}$, whereas the space \textit{spanned} by $X$ is $\mathrm{span}_{K}(X)=\sum_{x\in X}Kx$.

We can now state the main result of the paper.

\begin{theorem}[Sum-bracket theorem]\label{th:main}
Let $\mathfrak{g}$ be a classical Lie algebra over a field $K$, and let $A\subseteq\mathfrak{g}$ be a symmetric set with $\mathrm{span}_{K}(\langle A\rangle)=\mathfrak{g}$.

Then, for some absolute constant $\varepsilon>0$ and some $k=e^{O(\dim(\mathfrak{g})^{2}\log\dim(\mathfrak{g}))}$, the following statements hold.
\begin{enumerate}[(i)]
\item\label{th:mainzero} If $\mathrm{char}(K)=0$, then
\begin{equation*}
|A^{k}|\geq|A|^{1+\varepsilon}.
\end{equation*}
\item\label{th:mainprime} If $\mathrm{char}(K)=p>0$, then
\begin{equation*}
|A^{k}|\geq\min\{|A|^{1+\varepsilon},p^{\dim(\mathfrak{g})}\}.
\end{equation*}
\item\label{th:mainstuck} Let $0<\delta<1$. If $\mathrm{char}(K)\geq 3\dim(\mathfrak{g})$ and there are no subfields $K'\leq K$ with $|A|^{\frac{1-\delta}{\dim(\mathfrak{g})}}\leq|K'|\leq|A|^{\frac{1+\delta}{\dim(\mathfrak{g})}}$, then
\begin{equation*}
|A^{k^{\lceil 1/\delta\rceil}}|\geq|A|^{1+\delta\varepsilon}.
\end{equation*}
\item\label{th:mainbig} Let $m\geq 1$ be an integer. If $K=\mathbb{F}_{q}$ with $\mathrm{char}(K)\geq 3\dim(\mathfrak{g})$ and $|A|\geq|\mathfrak{g}|^{\frac{1}{2}\left(1+\frac{1}{m}\right)}$, then
\begin{equation*}
A^{k^{m}}=\mathfrak{g}.
\end{equation*}
\end{enumerate}
\end{theorem}

The value of $k$ above is in line with the bound in \cite[Thm.~6.4]{BDH21} for classical Chevalley groups: up to standard manipulations, that result asserts that $|A^{k}|\geq|A|^{1+\varepsilon}$ for some absolute constant $\varepsilon>0$ and $k=e^{O(r^{4}\log r)}$, where $r$ is the rank of $G$.

In analogy with group diameters, for an algebra $\mathfrak{g}$ define the \textit{diameter} $\mathrm{diam}(\mathfrak{g})$ to be the smallest $k$ such that $A^{k}=\mathfrak{g}$ for all symmetric sets $A$ generating $\mathfrak{g}$. Theorem~\ref{th:main}\eqref{th:mainprime} implies the following.

\begin{corollary}[Diameter bound]\label{co:diam}
Let $p$ be a prime, and let $\mathfrak{g}$ be a classical Lie algebra over $\mathbb{F}_{p}$. Then
\begin{equation*}
\mathrm{diam}(\mathfrak{g})\leq(\log|\mathfrak{g}|)^{C}
\end{equation*}
for $C=O(\dim(\mathfrak{g})^{2}\log\dim(\mathfrak{g}))$.
\end{corollary}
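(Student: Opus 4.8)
The plan is to bootstrap Theorem~\ref{th:main}\eqref{th:mainprime} in the standard fashion. Write $N=\dim(\mathfrak{g})$, so that $|\mathfrak{g}|=p^{N}$ and $\log|\mathfrak{g}|=N\log p$, and let $k=e^{O(N^{2}\log N)}$ be the exponent furnished by Theorem~\ref{th:main}. Fix any symmetric $A$ with $\langle A\rangle=\mathfrak{g}$; since $\mathfrak{g}\neq\{0\}$ one has $|A|\geq 2$. Define iterates by $A_{0}=A$ and $A_{i+1}=A_{i}^{k}$. A straightforward induction on the recursive definition of $X^{k}$ gives $X^{m}+X^{n}\subseteq X^{m+n}$, $[X^{m},X^{n}]\subseteq X^{m+n}$, and hence $(X^{a})^{b}\subseteq X^{ab}$ for all $a,b\geq 1$; consequently each $A_{i}$ is symmetric, contains $A$ (so it still generates $\mathfrak{g}$), and satisfies $A_{i}\subseteq A^{k^{i}}$.

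Now apply Theorem~\ref{th:main}\eqref{th:mainprime} to each $A_{i}$, obtaining $|A_{i+1}|=|A_{i}^{k}|\geq\min\{|A_{i}|^{1+\varepsilon},\,p^{N}\}$. Set $t=\lceil\log_{1+\varepsilon}(N\log_{2}p)\rceil$. I claim $A_{t}=\mathfrak{g}$. Indeed, if at some step $i_{0}<t$ the second term is the active branch of the minimum, then $|A_{i_{0}+1}|\geq p^{N}=|\mathfrak{g}|$, so $A_{i_{0}+1}=\mathfrak{g}$ and a fortiori $A_{t}=\mathfrak{g}$ (using $A_{i}\subseteq A_{i+1}$); otherwise $|A_{i+1}|\geq|A_{i}|^{1+\varepsilon}$ for all $i<t$, whence $|A_{t}|\geq|A|^{(1+\varepsilon)^{t}}\geq 2^{(1+\varepsilon)^{t}}\geq 2^{N\log_{2}p}=p^{N}$, forcing $A_{t}=\mathfrak{g}$ as well. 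In either case $A^{k^{t}}\supseteq A_{t}=\mathfrak{g}$, and since trivially $A^{k^{t}}\subseteq\mathfrak{g}$ we get $A^{k^{t}}=\mathfrak{g}$. As $A$ was an arbitrary symmetric generating set and $t=O(\log(N\log p))=O(\log\log|\mathfrak{g}|)$ — here one uses that $\dim(\mathfrak{g})\geq 2$ for a classical Lie algebra, so that $t$ is a well-defined positive integer and $\log\log|\mathfrak{g}|$ is positive and bounded below by an absolute constant — we conclude
\[
\mathrm{diam}(\mathfrak{g})\leq k^{t}=e^{O(N^{2}\log N)\cdot O(\log\log|\mathfrak{g}|)}=(\log|\mathfrak{g}|)^{O(N^{2}\log N)},
\]
which is the asserted bound with $C=O(\dim(\mathfrak{g})^{2}\log\dim(\mathfrak{g}))$.

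I do not anticipate any genuine difficulty: this is precisely the ``escape to exponential growth, then iterate a doubly-logarithmic number of times'' scheme used to deduce diameter bounds for finite simple groups of Lie type, and Theorem~\ref{th:main}\eqref{th:mainprime} is tailor-made for it (in particular it carries no lower bound on $p$, so it applies for every prime). The only points requiring a little care are the bookkeeping that converts $t$ successive applications of $A\mapsto A^{k}$ into the single power $A^{k^{t}}$ via $(X^{a})^{b}\subseteq X^{ab}$, and the verification that feeding the doubly-logarithmic iteration count $t$ into the exponent of $k$ reproduces exactly the exponent $C=O(\dim(\mathfrak{g})^{2}\log\dim(\mathfrak{g}))$ appearing in $(\log|\mathfrak{g}|)^{C}$.
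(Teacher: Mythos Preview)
Your argument is correct and is essentially the same as the paper's: iterate Theorem~\ref{th:main}\eqref{th:mainprime} on $A,A^{k},A^{k^{2}},\ldots$ until the algebra is filled, bound the number $t$ of iterations by $O(\log\log|\mathfrak{g}|)$, and read off $\mathrm{diam}(\mathfrak{g})\leq k^{t}=(\log|\mathfrak{g}|)^{O(\dim(\mathfrak{g})^{2}\log\dim(\mathfrak{g}))}$. You are merely a bit more explicit about the bookkeeping (the containment $(X^{a})^{b}\subseteq X^{ab}$ and the two branches of the minimum), but there is no substantive difference.
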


Again, the bound in Corollary~\ref{co:diam} is qualitatively as sharp as the diameter bound in \cite[Thm.~1.2]{BDH21} for classical Chevalley groups, for which we have $\mathrm{diam}(G)\leq(\log|G|)^{O(r^{4}\log r)}$. We remark that this is not a hard limit, and that eliminating at least the logarithmic factor inside the exponent $C$ should be entirely possible (see Appendix~\ref{se:appfurther} for further comments).

Before stating the dimensional estimate result, we need some more definitions. If $\mathfrak{g}$ is a finite-dimensional vector space over a field $K$, a \textit{linear affine space} inside $\mathfrak{g}$ is the set of zeros in $\mathfrak{g}$ of a finite collection $\{f_{i}\}_{i}$ of polynomials of degree $1$ over $K$ (not necessarily homogeneous). Equivalently, $V$ is a linear affine space if, for every $x,x'\in V$ and every $k\in K$, the element $kx+(1-k)x'$ is also in $V$. Every linear affine space $L$ is a translate $W+x$ of a unique $K$-vector subspace $W$ of $\mathfrak{g}$, and $x$ can be taken to be any element of $V$ itself; the \textit{dimension} of $L$ is defined to be $\dim(V)=\dim(W)$.

When studying growth, we may sometimes want to restrict to using only $[\cdot,\cdot]$. Let $X,Y\subseteq\mathfrak{g}$. We define recursively the following pieces of notation:
\begin{align*}
X^{[0]}& =\mathcal{T}_{0}(X)=\mathcal{T}_{0}(X,Y)=\mathcal{S}_{0}(X,Y)=\{0\}, \\
X^{[1]}& =\mathcal{T}_{1}(X)=\mathcal{T}_{1}(X,Y)=\mathcal{S}_{1}(X,Y)=X, \\
X^{[k]}& =\bigcup_{0<j<k}[X^{[j]},X^{[k-j]}], \\
\mathcal{T}_{k}(X)& =[\mathcal{T}_{k-1}(X),X]\cup[X,\mathcal{T}_{k-1}(X)], \\
\mathcal{T}_{k}(X,Y)& =[\mathcal{T}_{k-1}(X,Y),Y]\cup[Y,\mathcal{T}_{k-1}(X,Y)]\cup[\mathcal{T}_{k-1}(Y),X]\cup[X,\mathcal{T}_{k-1}(Y)], \\
\mathcal{S}_{k}(X,Y)& =\bigcup_{0<j<k}\left([\mathcal{S}_{j}(X,Y),Y^{[k-j]}]\cup[Y^{[j]},\mathcal{S}_{k-j}(X,Y)]\right).
\end{align*}
Informally speaking: $X^{[k]}$ is the set of expressions constructed using only $[\cdot,\cdot]$; $\mathcal{T}_{k}(X)$ is the subset of $X^{[k]}$ in which, for any two $[\cdot,\cdot]$ in the construction, one is nested into the other; and $\mathcal{T}_{k}(X,Y)$ (respectively $\mathcal{S}_{k}(X,Y)$) is the set of expressions constructed taking an element of $\mathcal{T}_{k}(Y)$ (respectively $Y^{[k]}$) and replacing with an $X$ one of the occurrences of $Y$. We shall informally refer to elements of $\mathcal{T}_{k}(X)$ and $\mathcal{T}_{k}(X,Y)$ as \textit{towers}. We frequently adopt some shorthand notation, such as $\mathcal{T}_{k}(x,Y)=\mathcal{T}_{k}(\{x\},Y)$ and $\mathcal{T}_{\leq k}(X,Y)=\bigcup_{0\leq j\leq k}\mathcal{T}_{j}(X,Y)$, and similarly for the other definitions above.

We frequently consider algebras having at least one of the following two properties: the first is well-known, the second is defined here for the first time.

\begin{definition}\label{de:structure}
Let $\mathfrak{g}$ be an algebra over a field $K$. Then
\begin{itemize}
\item $\mathfrak{g}$ is \textit{simple} if $[V,\mathfrak{g}]\cup[\mathfrak{g},V]\not\subseteq V$ for all $K$-vector subspaces $\{0\}\subsetneq V\subsetneq\mathfrak{g}$, and
\item $\mathfrak{g}$ is a \textit{tower algebra} if for all $x\in\mathfrak{g}$, all $Y\subseteq\mathfrak{g}$ symmetric, and all $k\geq 1$ we have
\begin{equation*}
\mathcal{S}_{\leq k}(x,Y)\subseteq\mathrm{span}_{K}(\mathcal{T}_{\leq k}(x,Y)).
\end{equation*}
\end{itemize}
\end{definition}

Again informally speaking, $\mathfrak{g}$ is a ``tower algebra'' when it is only moderately non-associative: an expression of $\mathfrak{g}$ with non-nested $[\cdot,\cdot]$ can still be rearranged so as to form towers.

We are now ready for the dimensional estimate result.

\begin{theorem}[Dimensional estimate]\label{th:main0}
Let $\mathfrak{g}$ be a finite-dimensional simple algebra over a field $K$, not necessarily associative, and let $A\subseteq\mathfrak{g}$ be a symmetric set with $\mathrm{span}_{K}(\langle A\rangle)=\mathfrak{g}$. Let $V\subseteq\mathfrak{g}$ be a linear affine space of $\mathfrak{g}$. Then
\begin{equation*}
|A\cap V|\leq|A^{k}|^{\frac{\dim(V)}{\dim(\mathfrak{g})}}
\end{equation*}
for some $k=e^{O(\dim(\mathfrak{g}))}$. If $\mathfrak{g}$ is also a tower algebra, we can take $k=O(\dim(\mathfrak{g})^{2})$.
\end{theorem}

The bound above is considerably tighter than the corresponding one in the case of Chevalley groups, such as in \cite[Thm.~A.7]{BDH21}.

We shall show in \S\ref{se:prelturr} that several noteworthy algebras are tower algebras.

\begin{corollary}\label{co:main02}
Theorem~\ref{th:main0} holds with $k=O(\dim(\mathfrak{g})^{2})$ for finite-dimensional
\begin{enumerate}[(i)]
\item\label{co:main02assoc} simple associative algebras, with $[\cdot,\cdot]$ the multiplication,
\item\label{co:main02lie} simple Lie algebras, with $[\cdot,\cdot]$ the Lie bracket,
\item\label{co:main02liesuper} simple Lie superalgebras, with $[\cdot,\cdot]$ the Lie superbracket, and
\item\label{co:main02malcev} simple Mal'cev algebras with $\mathrm{char}(K)\neq 2$, with $[\cdot,\cdot]$ the Mal'cev bracket.
\end{enumerate}
\end{corollary}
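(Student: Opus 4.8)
The plan is to reduce Corollary~\ref{co:main02} to the second assertion of Theorem~\ref{th:main0}, namely that turrifiable simple algebras enjoy the faster dimensional estimate with $k=O(\dim(\mathfrak{g})^2)$. Thus the entire task is to verify that each of the four listed families is turrifiable, since simplicity is already part of the hypothesis in each item and Theorem~\ref{th:main0} supplies the conclusion. Recall that turrifiability asks that, for every $x\in\mathfrak{g}$, every symmetric $Y\subseteq\mathfrak{g}$, and every $k\geq 1$, one has $\mathcal{S}_{\leq k}(x,Y)\subseteq\mathrm{span}_{K}(\mathcal{T}_{\leq k}(x,Y))$; informally, any bracket expression in which the brackets are \emph{not} nested into a single chain can be rewritten, using the defining identities of the algebra, as a $K$-linear combination of genuine towers (with the same multiset of inputs and the same total length). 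So the proof is really four short verifications, one per algebra class, each exploiting the relevant algebraic identity.

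First I would handle the associative case, which is essentially Remark~\ref{re:associative}: when $[\cdot,\cdot]$ is an associative multiplication, associativity lets one reparenthesize \emph{any} product of $k$ factors into the left-nested (or right-nested) form, so every element of $\mathcal{S}_{\leq k}(x,Y)$ is \emph{literally} an element of $\mathcal{T}_{\leq k}(x,Y)$ (no linear combination needed), and turrifiability holds trivially. For Lie algebras, the key is the Jacobi identity $[[a,b],c]=[[a,c],b]+[a,[b,c]]$, which is precisely the statement that a bracket landing on the ``outside'' of another bracket can be pushed inside at the cost of producing a sum of two terms, each still of the same length; iterating this, any Lie monomial can be expanded as a $K$-linear combination of left-normed brackets $[[\cdots[[z_1,z_2],z_3]\cdots],z_m]$, which are exactly the towers. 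One must check that the bookkeeping respects the ``replace one occurrence of $Y$ by $x$'' structure built into $\mathcal{S}_k(x,Y)$ versus $\mathcal{T}_k(x,Y)$, but this is routine because the Jacobi rewriting never changes which inputs appear, only how they are bracketed. The Lie superalgebra case is identical in spirit: the super Jacobi identity (graded Jacobi) gives the same push-inside rewriting up to signs $\pm1\in K$, so again every super-monomial is a $K$-linear combination of left-normed super-brackets; here one should note $\mathrm{char}(K)\neq 2$ is \emph{not} needed for the rewriting itself (signs are units regardless), though one should double-check whether the paper's normalization of the superbracket on the odd part requires it.

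The Mal'cev case is the one I expect to be the genuine obstacle, and where $\mathrm{char}(K)\neq 2$ enters essentially. Mal'cev algebras are anticommutative but only satisfy a weakened Jacobi-type identity, namely the Mal'cev identity $[[x,y],[x,z]]=[[[x,y],z],x]+[[[y,z],x],x]+[[[z,x],x],y]$ (equivalently $[J(x,y,z),x]=J(x,y,[x,z])$ where $J$ is the Jacobian), so one does \emph{not} have the clean push-inside rule of the Lie case. The plan is to show that the Mal'cev identity, together with anticommutativity, still suffices to express any product of the special shape appearing in $\mathcal{S}_k(x,Y)$ — which by construction has all of its ``non-tower'' branching occurring in a controlled way — as a linear combination of towers; solving for the relevant term in the Mal'cev identity requires dividing by $2$ (or by small integers coming from linearizations), which is exactly why $\mathrm{char}(K)\neq 2$ is imposed. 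I would look for an induction on $k$ in which the inductive hypothesis is applied to the strictly shorter subexpressions produced by one application of the Mal'cev identity; the subtlety to watch is that the Mal'cev identity produces terms with a \emph{doubled} input $x$ (two copies of the same argument), so one has to confirm that the length accounting in $\mathcal{T}_{\leq k}$ still closes — the total number of leaves is preserved, so it should, but this is the step that most deserves careful writing. Once turrifiability is established for all four families, the corollary follows immediately by invoking Theorem~\ref{th:main0}.
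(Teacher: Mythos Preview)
Your reduction is exactly the paper's: invoke the turrifiable clause of Theorem~\ref{th:main0} and verify turrifiability class by class, with the associative, Lie, and Lie superalgebra cases handled essentially as you sketch (cf.\ Proposition~\ref{pr:liemalcev}, Remark~\ref{re:associative}, Remark~\ref{re:liesuper}).

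The one place your sketch and the paper diverge is the Mal'cev case, and precisely at the worry you flag. You propose to work with the defining Mal'cev identity, which has a repeated argument, and you are right that this makes the bookkeeping awkward: to apply it to a generic $[[y_1,y_2],[y_3,y_4]]$ you would first need two of the four entries to coincide. The paper avoids this entirely by passing (via $\mathrm{char}(K)\neq 2$) to Sagle's linearized four-variable consequence
\[
[[[a,b],c],d]+[[[b,c],d],a]+[[[c,d],a],b]+[[[d,a],b],c]+[[a,c],[d,b]]=0,
\]
which has no repeated letters. With this identity in hand, the Mal'cev induction runs parallel to the Lie case: one rewrites $[[y_1,y_2],[y_3,y_4]]$ as a $K$-linear combination of four triply-nested brackets (each a tower of length $3$ bracketed with a single remaining factor), and the double induction on total length and on $k_3+k_4$ closes exactly as for Lie algebras. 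So your ``doubled input'' concern is real for the raw identity but evaporates once you linearize; the $\mathrm{char}(K)\neq 2$ hypothesis is used to obtain the linearized identity, not later in the induction.
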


\subsection{Overview of the argument}\label{se:introover}

In order to prove Theorem~\ref{th:main}, we go first through the dimensional estimate of Theorem~\ref{th:main0}: this will sound familiar to the reader that has experience with growth in linear algebraic groups. The main step (see the proof of Theorem~\ref{th:dimest}) is an induction procedure in which maps $f$ are applied to the space $V$ in order to increase or decrease its dimension, until we reach $\dim(V)\in\{0,\dim(\mathfrak{g})\}$.

The capital advantage in the case of algebras with respect to the case of groups is that, since $[\cdot,\cdot]$ is bilinear, all objects involved are linear spaces and all maps are linear. To prove the dimensional estimate for linear algebraic groups, for general maps $f:X\rightarrow Y$ we need to control the contribution of the fibres of $f$ that have dimension larger than $\dim(X)-\dim(f(X))$: in our case however, all fibres have the same dimension and linearity is preserved throughout (Proposition~\ref{pr:linearmap}). Another advantage comes from the routine called \textit{escape from subvarieties}, in which we pass to a small power $A^{k}$ of $A$ to be sure that we get out of the variety of ``special'' (i.e.\ especially bad) elements: however, escaping from linear spaces is much more rapid when $\mathfrak{g}$ is simple and/or a tower algebra (\S\ref{se:esc}).

Now we pass to Theorem~\ref{th:main}. The key idea is the following. Applying dimensional estimates (which are upper bounds) to linear spaces of the form $f^{-1}(x)$, we get analogous lower bounds for the image of $f$. By choosing $f$ appropriately, we obtain a $1$-dimensional subspace $V$ having roughly at least $|A|^{\frac{1}{\dim(\mathfrak{g})}}$ elements of $A$ lying on it: this process appears in \S\ref{se:descent}. $V$ is just a copy of the field $K$, so we apply the sum-product theorem: the sum in $K$ corresponds to the sum in $\mathfrak{g}$, and the product in $K$ is achieved through $[\cdot,\cdot]$ by bilinearity. Thus, we can produce $|A|^{\frac{1+\varepsilon}{\dim(\mathfrak{g})}}$ elements in $V$: after we stick $V$ in linearly independent directions, we get $|A|^{1+\varepsilon}$ elements overall by direct sum: this step is contained in \S\ref{se:main}.

The argument above is quite old. The first result proving Babai's conjecture for a family of groups, due to Helfgott \cite{Hel08}, used the same technique: passing from a set $A\subseteq\mathrm{SL}_{2}(\mathbb{F}_{p})$ to a set of traces, of size roughly $|A|^{1/3}$, then using sum-product theorems in this new set and finally going back to $\mathrm{SL}_{2}(\mathbb{F}_{p})$.

Unlike for dimensional estimates, for this second part we focus on classical Lie algebras. These objects have a desirable additional property: up to few manipulations, every subspace $V$ of $\mathfrak{g}$ we encounter is such that, for some $x\in V$, we have $[x,x]=0$ (by definition of Lie bracket) but $[y,x]\neq 0$ for some other $y\in V$. Thus we can pass from $V$ to a new space $[V,x]$ and decrease its dimension without making it go to $0$: the resulting controlled descent in dimensions lets us safely reach the final $1$-dimensional subspace. This additional property is shown to hold for classical Lie algebras in \S\ref{se:generic}, with one technical detail postponed to Appendix~\ref{se:appextr}.

We reserve Appendix~\ref{se:appfurther} to comment on possible further directions of study.

\section{Preliminaries}\label{se:prel}

In this section, we collect the initial results that we need for the main argument: elementary facts about sets and their span (\S\ref{se:prelspan}), tower algebras (\S\ref{se:prelturr}), and sum-product theorems for fields (\S\ref{se:prelfields}).

\subsection{Size growth and span growth}\label{se:prelspan}

We begin with a few elementary observations about sets in algebras. Trivially, if $X^{k}=X^{k+1}=\ldots=X^{2k}$ then $X^{k}=\langle X\rangle$. Aside from the algebra $\langle X\rangle$ and the vector space $\mathrm{span}_{K}(X)$, the set $X$ yields also the abelian group $\mathrm{span}_{+}(X)=\sum_{x\in X}\mathbb{Z}x$: clearly $\mathrm{span}_{+}(X)\subseteq\mathrm{span}_{K}(X)\cap\langle X\rangle$, but in general neither of the sets $\mathrm{span}_{K}(X),\langle X\rangle$ is contained in the other (although a small power of $X$ does span the same space as $\langle X\rangle$: see Proposition~\ref{pr:span}). By definition $X^{[k]}\subseteq X^{k}$, but by the bilinearity of $[\cdot,\cdot]$ we have also
\begin{equation}\label{eq:braspan}
X^{k}\subseteq\mathrm{span}_{+}(X^{[\leq k]}).
\end{equation}

It may not be true, unlike in the case of groups, that $X^{k}=X^{k+1}$ implies $X^{k}=\langle X\rangle$; in particular we may not have in general that $|X^{k}|\geq k$. As a matter of fact, if we had just $|X^{2k}|=|X^{k}|+1$ then the size of $X^{k}$ would only be logarithmic in $k$. This does not happen, fortunately: the size of $X^{k}$ is still at least $k^{\varepsilon}$ for some $\varepsilon>0$, and this will be enough for us.

\begin{lemma}\label{le:linsize}
Let $X$ be a symmetric set inside an algebra, not necessarily associative. Then there is some absolute constant $\varepsilon>0$ such that $|X^{k}|\geq\min\{k^{\varepsilon},|\langle X\rangle|\}$ for all $k\geq 1$.
\end{lemma}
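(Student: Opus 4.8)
The plan is to show that the non-decreasing sequence $\bigl(|X^{k}|\bigr)_{k\geq 1}$ (recall $X^{k}\subseteq X^{k+1}$ for symmetric $X$) grows polynomially in $k$ until it saturates at $|\langle X\rangle|$. The engine is a \emph{one-step dichotomy}: there is an absolute $c>0$ such that for every $k\geq 1$, either $X^{2k}=\langle X\rangle$, or $|X^{3k}|\geq(1+c)|X^{k}|$. Granting this, an induction along $k=3^{t}$ gives $|X^{3^{t}}|\geq\min\{(1+c)^{t},|\langle X\rangle|\}$ for all $t\geq 0$: if $X^{2\cdot 3^{t}}=\langle X\rangle$ for some $t$ then $X^{k}=\langle X\rangle$ for all $k\geq 3^{t+1}$ and the bound is immediate thereafter, and otherwise the dichotomy supplies the factor $(1+c)$ at each stage. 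Interpolating between consecutive powers of $3$ then yields $|X^{k}|\geq\min\{(1+c)^{-1}k^{\varepsilon},|\langle X\rangle|\}$ with $\varepsilon=\log_{3}(1+c)$, and one absorbs the leading constant by passing to a slightly smaller exponent and checking the finitely many small $k$ by hand, using that $|X|\geq 2$ whenever $|\langle X\rangle|\geq 2$.

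To prove the dichotomy, fix $k$. The balanced split in the defining recursion gives $X^{k}+X^{k}\subseteq X^{2k}$ and $[X^{k},X^{k}]\subseteq X^{2k}$, hence also $X^{2k}+X^{k}\subseteq X^{3k}$. If $|X^{k}+X^{k}|\geq\tfrac{3}{2}|X^{k}|$ we are done, so assume not. Apply Kneser's theorem in the abelian group $(\mathfrak{g},+)$ to $S:=X^{k}+X^{k}$, with stabilizer $H:=\{g\in\mathfrak{g}\mid g+S=S\}$: Kneser gives $|S|\geq 2|X^{k}+H|-|H|$. If $X^{k}$ meets $a$ cosets of $H$, then $|X^{k}+H|=a|H|$ and $|X^{k}|\leq a|H|$, so $|S|\geq(2a-1)|H|\geq(2-\tfrac1a)|X^{k}|$; since $|S|<\tfrac32|X^{k}|$ this forces $a=1$. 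As $0\in X^{k}$, the single coset is $H$ itself, so $X^{k}\subseteq H$; then $S=X^{k}+X^{k}$ is a subset of $H$ invariant under translation by $H$ and containing $0$, hence $S=H$. In particular $H=X^{k}+X^{k}$ is a subgroup with $|H|\geq|X^{k}|$.

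It remains to handle the case $X^{k}\subseteq H=X^{k}+X^{k}$. Here $jX^{k}=H$ for all $j\geq 2$, so $\mathrm{span}_{+}(X^{k})=H$, and by bilinearity $[H,H]\subseteq\mathrm{span}_{+}([X^{k},X^{k}])$. If $[X^{k},X^{k}]\subseteq H$, then $[H,H]\subseteq H$, so $H$ is closed under $+$ and $[\cdot,\cdot]$ and contains $X$; hence $\langle X\rangle\subseteq H$, and together with $H=X^{k}+X^{k}\subseteq X^{2k}\subseteq\langle X\rangle$ this gives $X^{2k}=\langle X\rangle$. If instead $b\in[X^{k},X^{k}]\setminus H$, then $b+X^{k}\subseteq X^{2k}+X^{k}\subseteq X^{3k}$ while $X^{k}\subseteq X^{3k}$, and the two sets are disjoint (if $b+z=z'$ with $z,z'\in X^{k}\subseteq H$, then $b\in H$), so $|X^{3k}|\geq 2|X^{k}|$. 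This establishes the dichotomy with $c=\tfrac12$.

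The only step that is not pure bookkeeping is the application of Kneser's theorem: it is what prevents a barely-growing sumset $X^{k}+X^{k}$ from being scattered across many cosets, and — being a dimension-free statement — it is exactly the reason $\varepsilon$ can be taken absolute rather than depending on $\dim\mathfrak{g}$. I expect that to be the main point to get right; the rest is careful manipulation of the recursion for $X^{k}$ and the passage from the scale $3^{t}$ to general $k$.
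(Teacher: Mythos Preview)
Your proof is correct and follows essentially the same strategy as the paper's: establish a one-step dichotomy (either $X^{O(k)}=\langle X\rangle$ or $|X^{O(k)}|\geq\tfrac{3}{2}|X^{k}|$) and iterate along a geometric sequence of $k$'s. The only difference is cosmetic---you invoke Kneser's theorem where the paper uses a result of Olson, and you test whether $[X^{k},X^{k}]$ escapes the stabilizing subgroup rather than whether $X^{4k}$ escapes $X^{2k}$, which in fact yields slightly sharper indices ($3k$ versus the paper's $6k$).
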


\begin{proof}
The function $f(k)=|X^{k}|$ is increasing, because $0\in X$ implies $X^{k}\subseteq X^{k+1}$, and its supremum is $|\langle X\rangle|$. If $|X|=1$ then $X=\{0\}=X^{k}=\langle X\rangle$ and there is nothing to prove; in addition, for $|X|\geq 2$ the statement is trivially true for all $k$ lower than a given constant. It is then sufficient to show that for any $k\geq 1$ either $X^{4k}=\langle X\rangle$ or $|X^{6k}|\geq\frac{3}{2}|X^{k}|$.

First of all $X^{k}$ is a subset of $\langle X\rangle$, which forms a group with the addition, and $0\in X^{k}$. By \cite[Thm.~1]{Ols84} with $A=B=X^{k}$, either $X^{k}+X^{k}+X^{k}=X^{k}+X^{k}$ or $|X^{k}+X^{k}|\geq\frac{3}{2}|X^{k}|$. If the latter holds, $|X^{6k}|\geq|X^{2k}|\geq\frac{3}{2}|X^{k}|$ as well. Suppose the former holds instead: in particular we have $\mathrm{span}_{+}(X^{k})=X^{k}+X^{k}$.

If there is no element $x\in X^{4k}\setminus X^{2k}$, then trivially $X^{4k}=\langle X\rangle$. Assume that there is one such $x$; then, for all $x_{1},x_{2}\in X^{k}+X^{k}$ we have
\begin{equation*}
x+x_{1}=x_{2} \ \ \Longrightarrow \ \ x=x_{2}-x_{1}\in\mathrm{span}_{+}(X^{k})=X^{k}+X^{k}\subseteq X^{2k}
\end{equation*}
which contradicts the assumption. Hence, $X^{k}+X^{k}$ is disjoint from $x+X^{k}+X^{k}$, and $|X^{6k}|\geq 2|X^{k}+X^{k}|\geq 2|X^{k}|\geq\frac{3}{2}|X^{k}|$.
\end{proof}

We improve also in another sense on the trivial observation at the beginning of the section: not only does the size grow between $k$ and $2k$, but also the span. The following lemma is folklore, and appears for instance also in \cite[Prop.~2.3]{GK20}.

\begin{lemma}\label{le:logspan}
Let $X$ be a symmetric set inside an algebra over a field $K$, not necessarily associative. Define $W_{k}:=\mathrm{span}_{K}(X^{k})$.

Then, if $W_{k}=W_{k+1}=\ldots=W_{2k}$, we have $W_{k'}=W_{k}$ for all $k'\geq k$.
\end{lemma}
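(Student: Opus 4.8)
The plan is to argue by strong induction on $k'$ that $W_{k'}\subseteq W_{k}$ for all $k'\geq k$; the reverse inclusion is free because $X$ is symmetric, so $X^{k}\subseteq X^{k'}$ and hence $W_{k}\subseteq W_{k'}$. The cases $k\leq k'\leq 2k$ are precisely the hypothesis. So fix $k'>2k$ and assume $W_{j}=W_{k}$ for every $k\leq j<k'$. Together with the monotonicity $W_{1}\subseteq W_{2}\subseteq\cdots$, this yields the uniform bound that does all the work: $W_{j}\subseteq W_{k}$ for every index $1\leq j<k'$ (equality once $j\geq k$, mere containment when $j<k$).

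Next I would unfold the recursive definition of $X^{k'}$ and pass to spans, using that the span of a union is the sum of the spans:
\[
W_{k'}=\sum_{0<j<k'}\mathrm{span}_{K}(X^{j}+X^{k'-j})+\sum_{0<j<k'}\mathrm{span}_{K}([X^{j},X^{k'-j}]).
\]
The additive terms are immediate: $X^{j}+X^{k'-j}\subseteq W_{j}+W_{k'-j}$, which is a subspace, so $\mathrm{span}_{K}(X^{j}+X^{k'-j})\subseteq W_{j}+W_{k'-j}\subseteq W_{k}$ by the uniform bound. For the bracket terms I would first record the elementary fact, a direct consequence of the bilinearity of $[\cdot,\cdot]$, that $\mathrm{span}_{K}(\{[x,y]:x\in B,\,y\in C\})$ depends only on $\mathrm{span}_{K}(B)$ and $\mathrm{span}_{K}(C)$ (expand each argument as a $K$-combination of elements of $B$, resp.\ $C$). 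Applying this twice: $\mathrm{span}_{K}([X^{j},X^{k'-j}])$ is governed by $W_{j},W_{k'-j}\subseteq W_{k}$, hence it is contained in $\mathrm{span}_{K}(\{[u,v]:u,v\in W_{k}\})=\mathrm{span}_{K}([X^{k},X^{k}])\subseteq\mathrm{span}_{K}(X^{2k})=W_{2k}=W_{k}$. Therefore $W_{k'}\subseteq W_{k}$, completing the induction.

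The only genuinely load-bearing point is the treatment of the bracket terms: the additive part alone would already close up under the far weaker hypothesis $W_{k}=W_{k+1}$, whereas a bracket of two elements of $X^{k}$ lands in $X^{2k}$ rather than in $X^{k}$, which is exactly why the statement demands stabilization all the way up to $W_{2k}$. The small technical care needed is to phrase the ``span of brackets depends only on the spans of its arguments'' observation cleanly, since it is used in both directions above; everything else is bookkeeping with the recursive definitions and with the inclusions $W_{j}\subseteq W_{k}$.
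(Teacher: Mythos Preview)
Your argument is correct and follows essentially the same route as the paper's: both reduce the span $W_{k'}$ to brackets of lower-indexed spans, use $W_{j},W_{k'-j}\subseteq W_{k}$, and close up via $[W_{k},W_{k}]\subseteq W_{2k}=W_{k}$. The only cosmetic differences are that the paper invokes \eqref{eq:braspan} to bypass the additive terms and proves just the step $W_{2k}=W_{2k+1}=W_{2k+2}$ (then iterates with $k\mapsto k+1$), whereas you run a direct strong induction on $k'$ and handle the additive terms explicitly.
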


\begin{proof}
We just need to show that $W_{2k}=W_{2k+1}=W_{2k+2}$. By \eqref{eq:braspan} we have also $W_{k}=\mathrm{span}_{K}(X^{[\leq k]})$, so $W_{2k+1}$ is spanned by the union of $W_{2k}$ and of $[W_{j},W_{2k+1-j}]$ for all $0<j<2k+1$. For $0<j\leq k$ we have $W_{2k+1-j}=W_{k}$ by the hypothesis, and for $k<j<2k+1$ we have $W_{j}=W_{k}$ and $2k+1-j\leq k$. Hence, $W_{2k+1}$ is contained in the span of the union of $W_{2k}$ and of $[W_{j},W_{k}]$ for all $0<j\leq k$, which is $W_{2k}$ itself. We argue similarly for $W_{2k+2}$.
\end{proof}

\subsection{Tower algebras}\label{se:prelturr}

In this subsection, we give some concrete examples of tower algebras.

\begin{definition}\label{de:liemalcev}
A \textit{Lie algebra} $\mathfrak{g}$ over $K$ is a $K$-vector space endowed with a bilinear operation $[\cdot,\cdot]$ (the \textit{Lie bracket}) satisfying
\begin{enumerate}
\item $[x,x]=0$ for all $x\in\mathfrak{g}$, and
\item $[[x,y],z]+[[y,z],x]+[[z,x],y]=0$ for all $x,y,z\in\mathfrak{g}$ (Jacobi identity).
\end{enumerate}
A \textit{Mal'cev algebra} $\mathfrak{g}$ over $K$ is a $K$-vector space endowed with a bilinear operation $[\cdot,\cdot]$ (the \textit{Mal'cev bracket}) satisfying
\begin{enumerate}
\item $[x,y]=-[y,x]$ for all $x,y\in\mathfrak{g}$ (anti-commutativity), and
\item $[[x,y],[x,z]]=[[[x,y],z],x]+[[[y,z],x],x]+[[[z,x],x],y]$ for all $x,y,z\in\mathfrak{g}$.
\end{enumerate}
\end{definition}

All Lie algebras are Mal'cev algebras, and all simple Lie algebras are simple Mal'cev algebras. Conversely, if $\mathrm{char}(K)\notin\{2,3\}$ a simple Mal'cev algebra is either a simple Lie algebra or a $7$-dimensional algebra of a known type (see \cite[Thm.~11]{Kuz68}).

Assuming Theorem~\ref{th:main0}, the various cases of Corollary~\ref{co:main02} follow from Proposition~\ref{pr:liemalcev}, Remark~\ref{re:associative}, and Remark~\ref{re:liesuper}.

\begin{proposition}\label{pr:liemalcev}
Lie algebras are tower algebras. Mal'cev algebras are tower algebras when $\mathrm{char}(K)\neq 2$.
\end{proposition}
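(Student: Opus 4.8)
The plan is to show that the "turrifiability" inclusion $\mathcal{S}_{\leq k}(x,Y)\subseteq\mathrm{span}_K(\mathcal{T}_{\leq k}(x,Y))$ holds by induction on $k$, exploiting that the only ways an element of $\mathcal{S}_k(x,Y)$ can fail to be a tower are controlled by the defining identities of the algebra. First I would recall the structure of a generic element of $\mathcal{S}_k(x,Y)$: it is a bracketing of $k$ factors, one of which is the distinguished $x$ and the rest are elements of $Y$, with the sole restriction (inherited from $\mathcal{S}_k(Y)$, i.e.\ $Y^{[k]}$) that is built from $Y^{[j]}$ and $Y^{[k-j]}$ blocks — but crucially \emph{without} the nesting condition that defines $\mathcal{T}_k$. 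So the combinatorial heart is: given any binary bracketing tree on $k$ leaves, rewrite the corresponding bracket monomial, modulo $\mathrm{span}_K$, as a $K$-linear combination of \emph{left-/right-combed} bracketings (towers), where at each node one of the two subtrees is a single leaf.

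The key step is a local rewriting move. In the Lie case, given an inner subexpression of the form $[[u,v],w]$ with $u,v,w$ themselves bracket monomials, the Jacobi identity gives $[[u,v],w] = [[u,w],v] + [u,[v,w]]$ (after using anti-commutativity), which strictly reduces the "depth" or the number of leaves hanging off the deepest non-combed node; iterating, every bracket monomial is rewritten as a $K$-linear combination of towers using the \emph{same} set of leaves, hence of the same length $k$, hence landing in $\mathrm{span}_K(\mathcal{T}_k(\text{same leaves}))$. One must be a little careful that along the rewriting the distinguished leaf $x$ stays the unique distinguished leaf and that all intermediate monomials still use $k$ leaves total — both are immediate since Jacobi preserves the multiset of leaves. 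Summing over the finitely many bracketing shapes appearing in $\mathcal{S}_{\leq k}(x,Y)$ and over choices of leaves from $Y$ gives the claimed span inclusion. For Mal'cev algebras with $\mathrm{char}(K)\neq 2$, anti-commutativity still lets one normalize which subtree sits on the left/right, and the defining Mal'cev identity plays the role of Jacobi: rearranged, it expresses a bracket of two non-leaf blocks, $[[x,y],[x,z]]$, in terms of genuine towers $[[[x,y],z],x]$, etc.; combined with linearization (here is where $\mathrm{char}(K)\neq 2$ enters, to polarize the repeated arguments) one again gets a depth-reducing rewriting move. I would phrase the induction so that the Mal'cev computation is invoked only at the "bottleneck" nodes — those where both children are non-leaves — and linearity/anti-commutativity handle everything else.

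The main obstacle I anticipate is purely bookkeeping: setting up a rewriting order (an induction statistic on bracketing trees — say, lexicographic on (number of leaves, sum over internal nodes of the size of the smaller child)) for which each application of Jacobi or of the Mal'cev identity is genuinely decreasing, and checking that the identity, after anti-commutativity normalization and (in the Mal'cev case) linearization, outputs only monomials that are strictly smaller in this order and still have the same leaf multiset. A secondary subtlety is that $\mathcal{S}_k$ and $\mathcal{T}_k$ are defined with the extra bookkeeping of the $Y^{[j]}$-blocks and the distinguished slot, so I would first prove the clean statement "every bracket monomial on a fixed ordered list of leaves lies in the $K$-span of combed monomials on that list" and then unwind the definitions to deduce the stated inclusion; the unwinding is routine once the clean statement is in hand. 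The linearization step in the Mal'cev case deserves an explicit lemma, since the defining identity is cubic in one slot and we need its full polarization, which is exactly where the characteristic hypothesis cannot be dropped.
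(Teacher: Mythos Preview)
Your proposal is correct and follows essentially the same approach as the paper: induct on $k$, reduce via anti-commutativity and the inductive hypothesis to the bottleneck shape $[[y_1,y_2],[y_3,y_4]]$, then use Jacobi (for Lie) or its Mal'cev analogue to rewrite such expressions as combinations of towers, with a secondary descent on the size of one block to guarantee termination. The paper implements this via a double induction on $(k,\,k_3+k_4)$ and, in the Mal'cev case, invokes the Sagle linearization $[[[a,b],c],d]+[[[b,c],d],a]+[[[c,d],a],b]+[[[d,a],b],c]+[[a,c],[d,b]]=0$ (valid for $\mathrm{char}(K)\neq 2$) in place of Jacobi---this is precisely the polarized identity you anticipate, though note the defining Mal'cev identity is quadratic rather than cubic in the repeated variable.
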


\begin{proof}
When $\mathrm{char}(K)\neq 2$ a Mal'cev algebra $\mathfrak{g}$ satisfies
\begin{equation}\label{eq:malcev}
[[[a,b],c],d]+[[[b,c],d],a]+[[[c,d],a],b]+[[[d,a],b],c]+[[a,c],[d,b]]=0
\end{equation}
for all $a,b,c,d\in\mathfrak{g}$ (see \cite[Prop.~2.21]{Sag61}). The proof is largely similar for Lie and Mal'cev algebras, so we treat the two cases together.

We proceed by induction on $k$. For $k\leq 3$ the statement is checked directly by definition, so assume that $k\geq 4$ and that the statement is true for all $k'<k$. Let $z\in\mathcal{S}_{k}(x,Y)$. In the cases
\begin{align*}
z & =[[z_{1},y_{2}],y], & & z_{1}\in\mathcal{S}_{k_{1}}(x,Y), \ \ y_{2}\in Y^{k_{2}}, \ \ y\in Y, & & k_{1}+k_{2}+1=k, \\
z & =[[y_{1},y_{2}],x], & & y_{i}\in Y^{k_{i}}, & & k_{1}+k_{2}+1=k,
\end{align*}
we are done by the inductive hypothesis and the definitions of $\mathcal{S}_{k},\mathcal{T}_{k}$. By anti-commutativity, the only case left is
\begin{align*}
z & =[[y_{1},y_{2}],[y_{3},y_{4}]], & & k_{1}+k_{2}+k_{3}+k_{4}=k,
\end{align*}
with exactly one index $i$ for which $y_{i}\in\mathcal{S}_{k_{i}}(x,Y)$ and $y_{j}\in Y^{k_{j}}$ for all $j\neq i$.

We build a second induction on $k_{3}+k_{4}$: by anti-commutativity we may assume that $k_{3}+k_{4}\geq k_{1}+k_{2}$. Using anti-commutativity and either the Jacobi identity or \eqref{eq:malcev}, we rewrite $z$ as
\begin{equation}\label{eq:xspanlie}
[[[y_{1},y_{2}],y_{3}],y_{4}]-[[[y_{1},y_{2}],y_{4}],y_{3}]
\end{equation}
if $\mathfrak{g}$ is a Lie algebra, and as
\begin{equation}\label{eq:xspanmal}
[[[y_{1},y_{3}],y_{2}],y_{4}]+[[[y_{3},y_{2}],y_{4}],y_{1}]+[[[y_{2},y_{4}],y_{1}],y_{3}]+[[[y_{4},y_{1}],y_{3}],y_{2}]
\end{equation}
if $\mathfrak{g}$ is a Mal'cev algebra. If $k_{3}+k_{4}=2$ then $y_{i}=x$ and $y_{j}\in Y$ for all $j\neq i$, and we are done by definition; hence, assume that $[[y'_{1},y'_{2}],[y'_{3},y'_{4}]]\in\mathrm{span}_{K}(\mathcal{T}_{k}(x,Y))$ whenever $k'_{1}+k'_{2}+k'_{3}+k'_{4}=k$ and $k'_{3}+k'_{4}<k_{3}+k_{4}$.

Each $[[y_{j_{1}},y_{j_{2}}],y_{j_{3}}]$ is in the span of either $\mathcal{T}_{\leq k_{j_{1}}+k_{j_{2}}+k_{j_{3}}}(x,Y)$ (if $i$ is among the indices) or $\mathcal{T}_{\leq k_{j_{1}}+k_{j_{2}}+k_{j_{3}}}(Y)$ (if it is not) by main induction, so both \eqref{eq:xspanlie} and \eqref{eq:xspanmal} give
\begin{equation*}
z\in\mathrm{span}_{K}([\mathcal{T}_{\leq k-k_{i}}(Y),y_{i}])+\sum_{j\neq i}\mathrm{span}_{K}([\mathcal{T}_{\leq k-k_{j}}(x,Y),y_{j}]).
\end{equation*}
Focus separately on each term of the sum. If $y_{j}\in Y$ (or $y_{i}=x$), the whole term is in $\mathrm{span}_{K}(\mathcal{T}_{\leq k}(x,Y))$ by definition. If $y_{j}=[y_{j1},y_{j2}]$, we are done by second induction, since $k_{j}<k_{3}+k_{4}$. This concludes both inductions.
\end{proof}

\begin{remark}\label{re:associative}
The same proof as in Proposition~\ref{pr:liemalcev} shows that associative algebras (with $[\cdot,\cdot]$ being the multiplication) are tower algebras. We just use commutativity and $[[y_{1},y_{2}],[y_{3},y_{4}]]=[[[y_{1},y_{2}],y_{3}],y_{4}]$, instead of anti-commutativity and \eqref{eq:xspanlie}.

By the Wedderburn-Artin theorem (already in its initial form by Wedderburn \cite{Mac08}, see for instance \cite[\S 5.2]{Coh03}), every finite-dimensional simple associative algebra is isomorphic to some full matrix algebra $\mathrm{Mat}_{n}(D)$ where $D$ is a division algebra over $K$.
\end{remark}

\begin{remark}\label{re:liesuper}
The proof of Proposition~\ref{pr:liemalcev} applies also to \textit{Lie superalgebras}, i.e.\ $\mathbb{Z}/2\mathbb{Z}$-graded algebras satisfying $[a,b]=-(-1)^{|a||b|}[b,a]$ and $[a,[b,c]]=[[a,b],c]+(-1)^{|a||b|}[b,[a,c]]$, where $|\cdot|$ is the grading. In fact, since we only care about spans, we can rewrite \eqref{eq:xspanlie} appropriately and swap entries of any $[a,b]$ regardless of the possible sign changes.

See \cite[Thm.~2]{Kac77} for a classification of finite-dimensional simple Lie superalgebras over $K=\mathbb{C}$.
\end{remark}

\subsection{Sum-product theorems}\label{se:prelfields}

Sum-product estimates are employed only at the very end of the process to prove Theorem~\ref{th:main}. Moreover, the proof is not overly sensitive to the precise shape of the sets for which we know lower bounds: the procedure works much in the same way, whether we have bounds for $\max\{|X+X|,|XX|\}$ or $|XX+XX|$ or $|(X+X)X|$ or $|(X+X)(X+X)|$ or many other possibilities.

The literature on sum-product estimates is rich and rapidly evolving, and the numerical values of $k,\varepsilon$ in the final bounds change depending on which result we employ. We present here one such estimate, chosen for its large exponent and its generality. The reader interested in producing quantitatively stronger versions of our main theorem can consult for example \cite{MPRRS19} and the references therein.

\begin{theorem}[\cite{RRS16}, Cor.~4]\label{th:sumprodF}
There is an absolute constant $C>0$ such that the following holds. Let $K$ be a field with $\mathrm{char}(K)=p$, and let $X$ be a finite subset of $K$. Then
\begin{align*}
|XX+XX+XX| & \geq C^{-1}|X|^{\frac{7}{4}} & & \text{if $p=0$,} \\
|XX+XX+XX| & \geq C^{-1}\min\{|X|^{\frac{7}{4}},p\} & & \text{if $p>0$.}
\end{align*}
\end{theorem}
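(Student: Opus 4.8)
The plan is to reduce the statement to the known Rudnev–Shkredov-type sum-product bound rather than to reprove it from scratch; the cited corollary is exactly such a statement, so the ``proof'' here is really a matter of quoting it correctly and checking that the hypotheses match. First I would recall the setup: $K$ is a field of characteristic $p$ (allowing $p=0$), and $X\subseteq K$ is finite. The quantity we must bound below is the iterated sumset of the product set $XX=\{x_1x_2:x_1,x_2\in X\}$, namely $XX+XX+XX$. Since $0\in X$ is not assumed, there is no degeneracy issue to worry about: if $X=\{0\}$ the claim is vacuous (the right-hand side is $C^{-1}$, which we may assume is less than $1$ by taking $C$ large), and otherwise $|X|\geq 2$.

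The main step is to invoke \cite[Cor.~4]{RRS16} directly. That corollary, stated over $\mathbb{F}_p$ and over fields of characteristic zero (or more generally fields not containing large subfields, but the clean statement is for prime fields and for $\mathrm{char}=0$), gives precisely $|XX+XX+XX|\gg\min\{|X|^{7/4},p\}$ with an absolute implied constant, and $|XX+XX+XX|\gg|X|^{7/4}$ when $p=0$. I would therefore set $C$ to be the reciprocal of the implied constant in \cite{RRS16} (enlarging it if necessary so that the $p=0$ bound and the trivial case $|X|=1$ are both absorbed), and observe that the two displayed inequalities are then immediate restatements. One small point to address: the paper states the result for an \emph{arbitrary} field $K$ of characteristic $p>0$, not merely $\mathbb{F}_p$. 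This is legitimate because the relevant estimate only uses the characteristic through the obstruction of subfields of size $\lesssim p$; for the weak form with the $\min\{|X|^{7/4},p\}$ truncation, the bound over any field of characteristic $p$ follows from the $\mathbb{F}_p$ case by the standard argument that a finite subset of $K$ either lies in (a translate/dilate of) a small subfield — handled by the $p$ term — or behaves as in the prime field. I would cite the discussion in \cite{RRS16} (or \cite{MPRRS19}) for this transfer.

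The only genuine obstacle is bookkeeping about which exact form of the sum-product inequality is in the literature under the label ``\cite[Cor.~4]{RRS16}'': one must make sure the exponent $7/4$ and the ternary sumset $XX+XX+XX$ (as opposed to, say, $X(X+X)$ or a binary version) are the ones actually proved there, and that no additional hypothesis like $|X|\leq p^{2/3}$ is needed in the truncated form — in the truncated form it is not, since the $\min$ with $p$ makes the statement trivially true once $|X|^{7/4}\geq p$. Given that, the proof is complete with essentially no computation: it is a citation plus a remark on the characteristic-$p$-to-$\mathbb{F}_p$ reduction and on the choice of the absolute constant $C$.
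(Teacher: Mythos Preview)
Your proposal is correct and matches the paper's treatment: the theorem is quoted directly from \cite[Cor.~4]{RRS16} without proof, and the only addendum is the extension from $\mathbb{F}_q$ to arbitrary fields. One small refinement: rather than the ad hoc subfield-reduction you sketch (which is not quite the right mechanism here), the paper simply invokes \cite[Rem.~5]{RRS16}, where the authors themselves observe that their proof goes through over any field with the same dependence on the characteristic.
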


Although the field of interest in the paper cited above is $\mathbb{F}_{q}$ with odd characteristic, the result does indeed apply to all fields as observed in \cite[Rem.~5]{RRS16}.

For general fields, we could have also used the estimate on $|XX+XX|$ contained in \cite[Cor.~4]{RRS16} or \cite[Cor.~15]{Rud18} with exponent $\frac{3}{2}$, or the estimate on $\max\{|XX+XX|,|(X+X)(X+X)|\}$ in \cite[Cor.~2.3]{AMRS17} with exponent $\frac{8}{5}$. Increasing the number of copies of $XX$ in the sum does not lead to large improvement: as observed in \cite[Rem.~33]{Shk18}, valid for $\mathbb{F}_{p}$, $m$ copies of $XX$ can give us estimates with exponent $2-\frac{1}{2^{m-1}}$, but $2$ is a hard limit.

For the field $\mathbb{R}$ specifically, other stronger estimates exist. We could have used for instance the estimate on $|X(X-X)|$ in \cite[Thm.~1.14]{Sha19} with exponent $\frac{3}{2}+\frac{7}{226}-o(1)$, or the estimate on $|XX+XX|$ in \cite[Thm.~2]{RS20} with exponent $\frac{3}{2}+\frac{7}{80}-o(1)$.

As for sum-product theorems in their classical form, i.e.\ lower bounds for $\max\{|X+X|,|XX|\}$, the strongest published estimate to date is \cite[Thm.~1.2]{RSS20}, with exponent $\frac{11}{9}$; see also the more recent \cite{MS21}, where the exponent is $\frac{5}{4}$. For the field $\mathbb{R}$, the strongest published estimate to date is instead \cite[Thm.~1]{RS20}, with exponent $\frac{4}{3}+\frac{2}{1167}-o(1)$.

When the field is finite and the set is very large, we also know how to fill the field quickly. This is illustrated in the following result.

\begin{theorem}[\cite{HIKR11}, Thm.~2.5]\label{th:sumprodQ}
Let $X$ be a subset of $\mathbb{F}_{q}$ with $\mathrm{char}(\mathbb{F}_{q})\neq 2$. Let $|X|>q^{\frac{1}{2}\left(1+\frac{1}{d}\right)}$ for some $d\geq 1$. Then $\mathbb{F}_{q}^{*}$ is contained in $dXX$ (the sum of $d$ copies of $XX$).
\end{theorem}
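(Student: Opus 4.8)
The plan is to prove this by the Fourier-analytic ``circle method'' over $\mathbb{F}_q$, following \cite{HIKR11}. First I would recast the statement geometrically: an element $a\in\mathbb{F}_q$ lies in $dXX$ precisely when $a=x_1y_1+\dots+x_dy_d$ for some $x_i,y_i\in X$, i.e.\ when the vectors $u=(x_1,\dots,x_d)$ and $v=(y_1,\dots,y_d)$ of $E:=X^d\subseteq\mathbb{F}_q^d$ satisfy $u\cdot v=a$. Hence it suffices to prove that the dot-product set $\{u\cdot v:u,v\in E\}$ contains $\mathbb{F}_q^{*}$, and under this reformulation the hypothesis $|X|>q^{\frac12(1+\frac1d)}$ becomes $|E|=|X|^{d}>q^{(d+1)/2}$, matching the Erd\H{o}s--Falconer-type threshold familiar from distance problems in $\mathbb{F}_q^{d}$.

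Fix $a\in\mathbb{F}_q^{*}$, and let $e_q(\cdot)=e^{2\pi i\,\mathrm{Tr}(\cdot)/p}$ be a non-trivial additive character of $\mathbb{F}_q$. By orthogonality,
\begin{align*}
N(a):=\#\{(u,v)\in E^2:u\cdot v=a\}&=\frac1q\sum_{t\in\mathbb{F}_q}e_q(-ta)\sum_{u,v\in E}e_q(t\,u\cdot v)\\
&=\frac1q\sum_{t\in\mathbb{F}_q}e_q(-ta)\,S(t)^{d},
\end{align*}
where, $E=X^d$ being a product set, the inner double sum factors over the coordinates into $S(t)^d$ with $S(t)=\sum_{x,y\in X}e_q(txy)$. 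The frequency $t=0$ contributes the positive main term $|E|^2/q=|X|^{2d}/q$, so the whole game is to show that the frequencies $t\neq 0$ contribute, in total, strictly less. The elementary estimates $|S(t)|\le|X|\,q^{1/2}$ for $t\neq 0$ (Cauchy--Schwarz and Parseval applied to the indicator of $X$) and $\sum_{t\neq 0}|S(t)|^2\le q|X|^3$ (a trivial multiplicative-energy bound) already give, by the triangle inequality and H\"older, that $N(a)>0$ whenever $|X|>q^{\frac12(1+\frac1{d-1})}$---just short of the stated threshold, with $d-1$ in place of $d$.

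Closing that gap is the hard part: one must save an extra factor $q^{1/2}$ in the tail $\frac1q\sum_{t\neq 0}e_q(-ta)S(t)^d$, for which the triangle inequality is too lossy and the cancellation in $t$ must be exploited. This is exactly what the ``averages over hyperplanes'' estimates of \cite{HIKR11} provide: the key quantity $\frac1q\sum_{t}|S(t)|^{2d}$ is the number of solutions of $u\cdot v=u'\cdot v'$ with $u,u',v,v'\in E$, an incidence energy between $E$ and the affine hyperplanes of $\mathbb{F}_q^{d}$; after polarising $u\cdot v$ the relevant hyperplane measures turn into the quadratic ``sphere'' measures of the Erd\H{o}s--Falconer circle of ideas, and the sharp bound follows from the Gauss-sum evaluation of their Fourier transforms---which is precisely where the hypothesis $\mathrm{char}(\mathbb{F}_q)\neq 2$ is used. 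Granting the sharp bound, the tail is $O(|E|\,q^{(d-1)/2})$, smaller than $|E|^2/q$ exactly when $|E|>q^{(d+1)/2}$; hence $N(a)>0$, and since $a\in\mathbb{F}_q^{*}$ was arbitrary, $\mathbb{F}_q^{*}\subseteq dXX$. I expect this hyperplane/surface-measure cancellation estimate to be the one genuinely delicate step; the reduction, the main term, and the bookkeeping (including the harmless case $0\in X$, removed by deleting a single point without affecting the hypothesis) are routine.
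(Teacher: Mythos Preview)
The paper does not prove this statement at all: it is quoted as \cite[Thm.~2.5]{HIKR11} and invoked as a black box in the proof of Theorem~\ref{th:main}\eqref{th:mainbig}, so there is nothing in the paper to compare your argument against. Your reduction to the dot-product problem for $E=X^{d}\subseteq\mathbb{F}_{q}^{d}$, the character expansion $N(a)=\frac{1}{q}\sum_{t}e_{q}(-ta)S(t)^{d}$, the main term $|E|^{2}/q$, and the observation that the triangle inequality only reaches the weaker threshold $|X|>q^{\frac{1}{2}(1+\frac{1}{d-1})}$ are all correct.

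Your description of how to close the gap, however, does not work as written. Routing through the energy $\frac{1}{q}\sum_{t}|S(t)|^{2d}=\sum_{a}N(a)^{2}$ and then back to the tail by Cauchy--Schwarz in $t$ is fatally lossy: since $\sum_{a}N(a)^{2}\geq|E|^{4}/q$ always, that inequality can never make the tail smaller than the main term $|E|^{2}/q$, no matter how sharp your energy bound is. (You also say one must ``exploit cancellation in $t$'', but passing to $|S(t)|^{2d}$ discards precisely that cancellation.) The ``polarise $u\cdot v$ into sphere measures and Gauss sums'' picture belongs to the quadratic \emph{distance} problem, not to the bilinear dot product. The argument that actually reaches the stated threshold applies Cauchy--Schwarz in the \emph{spatial} variable $u$ rather than in $t$: writing the tail as $\sum_{u\in E}f(u)$ with $f(u)=\frac{1}{q}\sum_{t\neq 0}e_{q}(-ta)\hat{1_{E}}(tu)$, one extends the $u$-sum to all of $\mathbb{F}_{q}^{d}$ and evaluates $\sum_{u}|f(u)|^{2}$ by pure orthogonality, obtaining $|R|\leq|E|\,q^{(d-1)/2}$ directly, with no Gauss sums involved in this step.
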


Theorems~\ref{th:sumprodF} and~\ref{th:sumprodQ} show that we have growth in $\mathbb{F}_{q}$ in two cases in which $X$ is far away from being a subfield of $\mathbb{F}_{q}$, namely the two extremes $|X|<p$ and $|X|>\sqrt{q}$. This is a general phenomenon: either $X$ grows under sum or product in a field $K$, or $X$ is too small, or $X$ is contained in $xK'\cup X'$ for a subfield $K'$ of size comparable to $X$ and for a set $X'$ small with respect to $X$ (see \cite[Thm.~2.55]{TV06} and the ensuing discussion). We are not able to retain such precise information on $X$ in the course of our proof, but we can still say something about its size.

\begin{theorem}\label{th:sumprodAv}
There is an absolute constant $\gamma>0$ such that the following holds. For any $\varepsilon>0$ there is $\varepsilon'=\gamma\varepsilon$ (resp.\ for any $\varepsilon'>0$ there is $\varepsilon=\gamma\varepsilon'$) such that, for any finite subset $X$ of a field $K$ with $|X|$ large enough with respect to $\varepsilon$ (resp.\ $\varepsilon'$), either
\begin{enumerate}[(a)]
\item\label{th:sumprodAvgrowth} $\max\{|X+X|,|XX|\}\geq|X|^{1+\varepsilon}$ or
\item\label{th:sumprodAvfield} there is a finite subfield $K'\leq K$ with $|X|^{1-\varepsilon'}\leq|K'|\leq|X|^{1+\varepsilon'}$.
\end{enumerate}
\end{theorem}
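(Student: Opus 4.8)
The plan is to deduce Theorem~\ref{th:sumprodAv} from the structural sum-product theorem referenced in the excerpt, namely \cite[Thm.~2.55]{TV06}: if $X$ is a finite subset of a field $K$ with $|X|$ large and $\max\{|X+X|,|XX|\}\leq|X|^{1+\varepsilon}$, then there is a subfield $K'\leq K$, an element $x\in K$, and a set $X'$ with $|X'|\leq|X|^{O(\varepsilon)}$ such that $X\subseteq xK'\cup X'$ and $|K'|\leq|X|^{O(1)}$ (the constants being absolute). The strategy is: assume \eqref{th:sumprodAvgrowth} fails, apply this dichotomy, and then pin down the size of $K'$ from the facts that $xK'$ must capture most of $X$ and that $X$ does not grow.

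First I would fix $\varepsilon>0$ small, assume $\max\{|X+X|,|XX|\}<|X|^{1+\varepsilon}$, and invoke the structural theorem to get $K'$, $x$, $X'$ as above, with $|X'|\leq|X|^{c_{1}\varepsilon}$ for an absolute $c_{1}$. Since $|X\cap X'|\leq|X'|\leq|X|^{c_{1}\varepsilon}$ is negligible, the set $X_{0}:=X\cap xK'$ satisfies $|X_{0}|\geq|X|-|X|^{c_{1}\varepsilon}\geq\frac{1}{2}|X|$ once $|X|$ is large. This already forces $|K'|\geq|xK'|\geq|X_{0}|\geq\frac{1}{2}|X|$, hence $|K'|\geq|X|^{1-\varepsilon'}$ for any prescribed $\varepsilon'$ and $|X|$ large. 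For the reverse inequality, consider $x^{-1}X_{0}\subseteq K'$: it is a subset of the subfield $K'$ of size $\geq\frac{1}{2}|X|$, and $x^{-1}X_{0}-x^{-1}X_{0}\subseteq K'$ as well, while $|x^{-1}X_{0}-x^{-1}X_{0}|=|X_{0}-X_{0}|\leq|X-X|\leq|X|^{1+c_{2}\varepsilon}$ (a Plünnecke–Ruzsa estimate in the ambient additive group, using $|X+X|\leq|X|^{1+\varepsilon}$; note $|X-X|\leq|X+X|^{2}/|X|$). Thus $K'$ contains a subset $Y:=x^{-1}X_{0}$ with $|Y|\geq\frac12|X|$, $0\in Y-Y\subseteq K'$, and $|Y-Y|\leq|X|^{1+c_{2}\varepsilon}$. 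Now iterate additively inside $K'$: the subgroup of $(K',+)$ generated by $Y-Y$ has size at most $|K'|$ but, since $K'$ is a field, $K'=\mathrm{span}_{\mathbb{F}_p}(Y-Y)$ once $Y-Y$ contains a nonzero element (which it does, as $|Y|\geq 2$) together with all its multiples — this is where I must be a little careful and instead argue: $Y-Y$ has small sumset, so by Plünnecke–Ruzsa $|m(Y-Y)|\leq|Y|^{1+O_m(\varepsilon)}$ for every fixed $m$; picking any $t\in(Y-Y)\setminus\{0\}$, the set $t^{-1}(Y-Y)$ contains $1$ and has the same small additive doubling, and then $(Y-Y)\cdot(Y-Y)^{-1}\cup\ldots$ generates $K'$ as a field in boundedly many steps — so $|K'|\leq|X|^{1+c_{3}\varepsilon}$, i.e.\ $|K'|\leq|X|^{1+\varepsilon'}$ for the right choice of $\varepsilon'=\gamma\varepsilon$.

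Cleaner: the standard way to get the \emph{upper} bound on $|K'|$ is to observe that $K'$, being generated as a field by $Y-Y$ with $Y-Y$ having small doubling, must have $|K'|\leq|Y-Y|^{O(1)}\leq|X|^{O(\varepsilon)}\cdot|X|$, because iterating sums and products of a set of additive doubling $|Y|^{\varepsilon}$ keeps the size polynomially bounded in $|X|$ for a bounded number of steps, and a bounded number of field operations suffices to reach all of $K'$ from a subset of $K'$ containing $1$ (after normalizing by $t$). Collecting the two bounds gives $|X|^{1-\varepsilon'}\leq|K'|\leq|X|^{1+\varepsilon'}$, which is \eqref{th:sumprodAvfield}. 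The resp.\ statement (prescribing $\varepsilon'$ first and extracting $\varepsilon=\gamma\varepsilon'$) follows by reading the same chain of inequalities in the other direction, solving for the $\varepsilon$ that makes $c_{3}\varepsilon\leq\varepsilon'$.

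The main obstacle is the bookkeeping of constants in the passage from ``$X$ lives mostly in a coset of a subfield'' to ``the subfield has size within $|X|^{\pm\varepsilon'}$'': the \cite[Thm.~2.55]{TV06} statement only gives $|K'|\leq|X|^{C}$ for some fixed $C$, so I genuinely need the small-doubling-implies-small-field argument (a few applications of Plünnecke–Ruzsa plus the fact that a field is generated in boundedly many steps by any subset containing $1$ with two distinct elements) to upgrade $C$ to $1+\varepsilon'$. Everything else — the negligibility of $X'$, the lower bound $|K'|\geq\frac12|X|$ — is routine. I would state the final $\gamma$ as whatever comes out of composing the Plünnecke–Ruzsa constants with the number of field operations needed, and not attempt to optimize it.
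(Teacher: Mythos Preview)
Your overall plan---assume \eqref{th:sumprodAvgrowth} fails and invoke \cite[Thm.~2.55]{TV06}---is exactly what the paper does. The difference is that you misremember the output of that theorem: as the paper uses it, \cite[Thm.~2.55]{TV06} already gives a subfield $K'$ with $|K'|\leq C_{2}|X|^{1+C_{2}\varepsilon}$ and $|X|\leq|K'|+C_{2}|X|^{C_{2}\varepsilon}$ (or else $|X|\leq C_{2}|X|^{C_{2}\varepsilon}$, impossible for $X$ large). Both inequalities in \eqref{th:sumprodAvfield} then drop out immediately for $|X|$ large, with $\varepsilon'=C_{2}\varepsilon$. The paper's proof is three lines.

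Because you believe the cited theorem only yields $|K'|\leq|X|^{C}$ for a fixed $C$, you attempt to bootstrap the upper bound, and this is where a real gap appears. Your key assertion---``a field is generated in boundedly many steps by any subset containing $1$ with two distinct elements''---is false as stated (take $\{0,1\}\subseteq\mathbb{F}_{p}$, which needs roughly $p$ additions). What you actually need is that a subset $Y\subseteq K'$ with small \emph{both} additive and multiplicative doubling must have $|K'|\leq|Y|^{1+O(\varepsilon)}$; but that is precisely the nontrivial content of the structural sum-product theorem you are citing, so the argument is circular. Pl\"unnecke--Ruzsa alone controls iterated sums (or iterated products) separately, not mixed sum-product expressions, so ``a few applications of Pl\"unnecke--Ruzsa'' cannot bridge the gap. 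In short: drop the bootstrap, quote the sharper conclusion of \cite[Thm.~2.55]{TV06}, and you recover the paper's proof verbatim.
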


\begin{proof}
Assume that \eqref{th:sumprodAvgrowth} does not hold. We can assume that $\varepsilon$ is small enough, by taking $\gamma$ large. Apply \cite[Thm.~2.55]{TV06}: for some absolute constant $C_{2}$, either $|X|\leq C_{2}|X|^{C_{2}\varepsilon}$ (which cannot hold for $\varepsilon$ small and $X$ large), or there is a subfield $K'$ with $|K'|\leq C_{2}|X|^{1+C_{2}\varepsilon}$ and $|X|\leq|K'|+C_{2}|X|^{C_{2}\varepsilon}$. Then for $X$ large there is an appropriate $\varepsilon'$ as in \eqref{th:sumprodAvfield}. The reverse implication follows similarly.
\end{proof}

\section{The escape argument}\label{se:esc}

As we said in \S\ref{se:introover}, we replicate here the escape from subvarieties used in the literature for groups. In the language of algebraic geometry, we need to escape in a few situations: a general variety of degree $1$ (\S\S\ref{se:escgen}-\ref{se:escturr}), and some particular varieties of degree $\leq 2$ (Lemma~\ref{le:quadresc}).

To make a set $A$ ``escape'' from $V$ means to find some small $k$ such that $A^{k}\not\subseteq V$: the original concept goes back to \cite{EMO05}. In particular, if $V$ is a vector subspace we only need to show that the span of $A^{k}$ has dimension larger than $V$: this principle has already been used in \cite[Prop. 3.5]{BDH21}, building on Shitov \cite{Shi19}. Unlike the growth of $A$, the growth of the dimension of the span of $A$ has already been studied in the literature: see most recently \cite{Shi19} \cite{GK20} \cite{Gre21}. The results of this section are in the same spirit as those of \cite{GK20} (for \S\ref{se:escgen}) and \cite{GK22} (for \S\ref{se:escturr}); our versions are more technical though, given the use we make of them later.

\subsection{Span growth for general algebras}\label{se:escgen}

We start with the case of general algebras.

\begin{proposition}\label{pr:span}
Let $\mathfrak{g}$ be an algebra over a field $K$ of dimension $D<\infty$, not necessarily associative, and let $A\subseteq\mathfrak{g}$ be a symmetric set such that $\mathrm{span}_{K}(\langle A\rangle)=\mathfrak{g}$.

Then, for any $d\geq 1$, the set $A^{[\leq s(d)]}$ spans a $K$-vector space of dimension $\geq\min\{d,D\}$, with $s(d)=2^{d-1}$.
\end{proposition}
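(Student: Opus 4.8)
The plan is to induct on $d$, using the simplicity of $\mathfrak{g}$ to force the span to grow by at least one dimension every time we roughly double the number of bracket-operations allowed. Concretely, set $V_d := \mathrm{span}_K(A^{[\leq s(d)]})$ with $s(d) = 2^{d-1}$, so $s(1) = 1$ and $V_1 = \mathrm{span}_K(A)$; since $\langle A\rangle = \mathfrak{g}$ and $\dim\mathfrak{g} = D \geq 1$, the set $A$ is not contained in $\{0\}$ (otherwise $\langle A\rangle = \{0\}$), so $\dim V_1 \geq 1 = \min\{1,D\}$, which is the base case. For the inductive step, assume $\dim V_d \geq \min\{d,D\}$; if $\dim V_d \geq D$ we are already done (the span is all of $\mathfrak{g}$ and stays there), so assume $\dim V_d = d < D$. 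We must produce an element of $A^{[\leq s(d+1)]} = A^{[\leq 2^{d}]}$ lying outside $V_d$.

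The key step is the escape itself. Suppose for contradiction that $A^{[\leq 2^d]} \subseteq V_d$. First I would argue that then $[V_d, V_d] \subseteq V_d$: by \eqref{eq:braspan}-type reasoning, $V_d = \mathrm{span}_K(A^{[\leq s(d)]})$ and $[A^{[\leq s(d)]}, A^{[\leq s(d)]}]$ consists of elements of the form $[u,v]$ with $u \in A^{[i]}$, $v \in A^{[j]}$, $i,j \leq s(d) = 2^{d-1}$, hence of elements of $A^{[\leq 2^d]}$; by bilinearity of $[\cdot,\cdot]$, the span of such brackets is exactly $[V_d, V_d]$, and this is contained in $\mathrm{span}_K(A^{[\leq 2^d]}) \subseteq V_d$ by assumption. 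But a $K$-vector subspace $V$ with $\{0\} \subsetneq V \subsetneq \mathfrak{g}$ and $[V,\mathfrak{g}]\cup[\mathfrak{g},V]\subseteq V$ is forbidden by simplicity (Definition~\ref{de:structure}(1)) — except that here I have only shown $[V_d,V_d]\subseteq V_d$, not $[V_d,\mathfrak{g}]\cup[\mathfrak{g},V_d]\subseteq V_d$. This is the main obstacle, and the way around it is to observe that $\mathfrak{g} = \langle A \rangle = \bigcup_j A^{[\leq j]}$ together with $V_d \neq \mathfrak{g}$ forces $A^{[\leq j]} \not\subseteq V_d$ for some $j$; take the minimal such $j$, which is strictly larger than $2^d$ by our contradiction hypothesis. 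Then $A^{[\leq j-1]} \subseteq V_d$, and since $s(d) = 2^{d-1} \leq j - 1$ we have $V_d = \mathrm{span}_K(A^{[\leq s(d)]}) \subseteq \mathrm{span}_K(A^{[\leq j-1]}) \subseteq V_d$, so in fact $V_d = \mathrm{span}_K(A^{[\leq j-1]})$; now an element $w \in A^{[j]} \setminus V_d$ has the form $[u,v]$ with $u \in A^{[i]}$, $v \in A^{[j-i]}$ and $0 < i < j$, hence $u, v \in A^{[\leq j-1]} \subseteq V_d$, so $w \in [V_d, V_d]$, contradicting $[V_d,V_d] \subseteq V_d$. (The hypothesis $[V_d, V_d] \subseteq V_d$ is itself derived exactly as above once we know $A^{[\leq j-1]} \subseteq V_d$ and $2^d \geq 2(j-1)/\dots$ — I need to be a little careful that $j - 1 \geq 2 s(d)$, i.e. that one more doubling step suffices; this is where the precise bookkeeping on $s(d)$ matters and is the part I expect to require the most attention.)

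Let me restate the inductive step cleanly to sidestep the subtlety: assuming $\dim V_d = d < D$, I claim $A^{[\leq 2s(d)]} = A^{[\leq 2^d]}$ already escapes $V_d$. Indeed, if $A^{[\leq 2^d]} \subseteq V_d$, then since $s(d) \leq 2^d$ we have $V_d = \mathrm{span}_K(A^{[\leq 2^d]})$, and every bracket $[u,v]$ with $u, v \in A^{[\leq 2^{d-1}]} = A^{[\leq s(d)]}$ lies in $A^{[\leq 2^d]} \subseteq V_d$, so $[V_d, V_d] = \mathrm{span}_K([A^{[\leq s(d)]}, A^{[\leq s(d)]}]) \subseteq V_d$. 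Iterating, $[V_d, [V_d, V_d]] \subseteq [V_d, V_d] \subseteq V_d$ and generally $A^{[\leq m]} \subseteq V_d$ for all $m$ by induction on $m$ (each new element is a bracket of two elements from strictly shorter expressions, all in $V_d$), whence $\langle A \rangle = \bigcup_m A^{[\leq m]} \subseteq V_d \subsetneq \mathfrak{g}$, contradicting $\langle A \rangle = \mathfrak{g}$. So some element of $A^{[\leq 2^d]}$ lies outside $V_d$, giving $\dim V_{d+1} \geq \dim \mathrm{span}_K(A^{[\leq 2^d]}) \geq \dim V_d + 1 \geq \min\{d,D\} + 1 \geq \min\{d+1, D\}$, which completes the induction. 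The only genuinely delicate point is the claim "$A^{[\leq m]} \subseteq V_d$ for all $m$", proved by a secondary induction on $m$ using that $[V_d,V_d]\subseteq V_d$: I would spell this out since it is the linchpin, but it is routine once the bracket-span identity $[V,V] = \mathrm{span}_K([B,B])$ for $V = \mathrm{span}_K(B)$ is in hand, which is immediate from bilinearity.
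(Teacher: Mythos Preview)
Your clean restatement is correct and matches the paper's approach: induct on $d$, and show that if $V_d=\mathrm{span}_K(A^{[\leq 2^{d-1}]})$ fails to grow by step $2^d$ then $V_d$ is bracket-closed, hence contains every $A^{[\leq m]}$, hence equals $\mathfrak{g}$ (the paper isolates this stabilization step as Lemma~\ref{le:logspan} and then quotes it, but the content is identical).

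Two small slips to fix. First, simplicity of $\mathfrak{g}$ is neither a hypothesis of the proposition nor needed in the argument; your clean version already avoids it, so drop the mention from your plan. Second, the identity $\langle A\rangle=\bigcup_m A^{[\leq m]}$ is false in general (an element like $a+b$ with $a,b\in A$ lies in $\langle A\rangle$ but need not lie in any $A^{[\leq m]}$). What you actually need, and what \eqref{eq:braspan} gives, is $\mathfrak{g}=\langle A\rangle\subseteq\mathrm{span}_K\bigl(\bigcup_m A^{[\leq m]}\bigr)$; since each $A^{[\leq m]}\subseteq V_d$ and $V_d$ is a $K$-vector space, this yields $\mathfrak{g}\subseteq V_d$, the desired contradiction.
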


\begin{proof}
Define $W_{k}:=\mathrm{span}_{K}(A^{[\leq k]})$ for all $k\geq 1$: we have $W_{k}\subseteq W_{k+1}$, so it is sufficient to prove the statement for all $1\leq d\leq D$. Furthermore $W_{k}\subsetneq W_{k+1}$ implies $\dim(W_{k+1})>\dim(W_{k})$ and, since $\dim(W_{k})\leq D<\infty$ for all $k$, there must be some $k_{\max}$ such that $W_{k'}=W_{k_{\max}}$ for all $k'\geq k_{\max}$. Then, \eqref{eq:braspan} implies that $W_{k_{\max}}=\mathrm{span}_{K}(\langle A\rangle)=\mathfrak{g}$.

We argue by induction: since $\mathrm{span}_{K}(\langle A\rangle)=\mathfrak{g}$, there is a nonzero element in $A$ and $s(1)=1$, so the base case is trivial. Now suppose that the result holds up to $d-1$, and assume that $\dim(W_{2^{d-1}})<d$: this means in particular that
\begin{equation*}
W_{2^{d-2}}=W_{2^{d-2}+1}=W_{2^{d-2}+2}=\ldots=W_{2\cdot 2^{d-2}}.
\end{equation*}
But then Lemma~\ref{le:logspan} yields $D=\dim(W_{k_{\max}})<d$, which is a contradiction. Therefore $\dim(W_{2^{d-1}})\geq d$ as well, proving the inductive step.
\end{proof}

We need the result above also for sets built out of a fixed element $v$. Simplicity plays a key role here.

\begin{corollary}\label{co:vspan}
Let $\mathfrak{g}$ be a simple algebra over a field $K$ of dimension $D<\infty$, not necessarily associative, and let $A\subseteq\mathfrak{g}$ be a symmetric set such that $\mathrm{span}_{K}(\langle A\rangle)=\mathfrak{g}$. Let $v\neq 0$ be an element of $\mathfrak{g}$.

Then, for any $d\geq 1$, the set $\mathcal{T}_{\leq d}(v,A^{[\leq s(D)]})$ spans a $K$-vector space of dimension $\geq\min\{d,D\}$, with $s(D)$ as in Proposition~\ref{pr:span}.
\end{corollary}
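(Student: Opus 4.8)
The plan is to bootstrap from Proposition~\ref{pr:span} using simplicity. First I would invoke Proposition~\ref{pr:span} with $d=D$ to get that $B:=A^{[\leq s(D)]}$ spans all of $\mathfrak{g}$ as a $K$-vector space. Now fix $v\neq 0$. As in the paragraph after Definition~\ref{de:structure}, simplicity guarantees that the map $w\mapsto[v,w]$ or the map $w\mapsto[w,v]$ is not identically zero, and more is true: I claim that for any proper subspace $\{0\}\subsetneq V\subsetneq\mathfrak{g}$ with $v\in V$ we cannot have both $[V,B]\subseteq V$ and $[B,V]\subseteq V$ — indeed since $\mathrm{span}_K(B)=\mathfrak{g}$ and $[\cdot,\cdot]$ is bilinear, $[V,B]\subseteq V$ forces $[V,\mathfrak{g}]\subseteq V$, and similarly on the other side, contradicting simplicity. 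This is the structural engine: as long as the current span is a proper subspace, bracketing against $B$ escapes it.

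Next I would set up the induction on $d$. Let $U_d:=\mathrm{span}_K(\mathcal{T}_{\leq d}(v,B))$. The base case $d=1$ is immediate since $v\neq 0$, and $s(D)\geq 1$. For the inductive step, assume $U_{d-1}$ has dimension $\geq\min\{d-1,D\}$; if $\dim(U_{d-1})\geq D$ there is nothing to prove, so suppose $U_{d-1}$ is a proper subspace. By construction $v\in U_{d-1}$ and, crucially, $U_{d-1}$ is built from towers in $v$ and $B$; appending one more bracket with an element of $B$ on either side produces elements of $\mathcal{T}_{\leq d}(v,B)$. So $U_d \supseteq \mathrm{span}_K\big(U_{d-1}\cup[U_{d-1},B]\cup[B,U_{d-1}]\big)$. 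By the claim of the previous paragraph applied to $V=U_{d-1}$, at least one of $[U_{d-1},B],[B,U_{d-1}]$ is not contained in $U_{d-1}$, so $\dim(U_d)\geq\dim(U_{d-1})+1\geq\min\{d,D\}$, completing the induction. One small point to verify is that $\mathcal{T}_{\leq d}(v,B)$ really does contain all of $U_{d-1}$ together with those one-step brackets: this follows directly from the recursive definition of $\mathcal{T}_k(X,Y)$, since $\mathcal{T}_d(v,B)\supseteq[\mathcal{T}_{d-1}(v,B),B]\cup[B,\mathcal{T}_{d-1}(v,B)]$ and $\mathcal{T}_{\leq d}\supseteq\mathcal{T}_{\leq d-1}$.

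The main obstacle is making the escape step airtight: one must be careful that $\mathcal{T}_{\leq d}(v,B)$ is exactly the right family of "towers in $v$ and $B$" so that the one-more-bracket operation stays inside it, and that the bilinearity argument reducing $[V,B]\subseteq V$ to $[V,\mathfrak{g}]\subseteq V$ is correct (it is, because $\mathrm{span}_K(B)=\mathfrak{g}$ and $V$ is a $K$-subspace, hence closed under $K$-linear combinations of the elements $[v',b]$). Everything else is a routine induction, and the bound $s(D)$ is inherited verbatim from Proposition~\ref{pr:span} with no further inflation, since we only ever bracket against the single fixed set $B=A^{[\leq s(D)]}$.
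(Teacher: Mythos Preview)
Your proposal is correct and follows essentially the same approach as the paper: define $U_d=\mathrm{span}_K(\mathcal{T}_{\leq d}(v,B))$ with $B=A^{[\leq s(D)]}$, use Proposition~\ref{pr:span} to get $\mathrm{span}_K(B)=\mathfrak{g}$, and induct on $d$ using simplicity to force $U_d\supsetneq U_{d-1}$ whenever $U_{d-1}\neq\mathfrak{g}$. The only cosmetic difference is that the paper invokes bilinearity via the identity $\mathrm{span}_K([X,Y])=[\mathrm{span}_K(X),\mathrm{span}_K(Y)]$ to write $U_d\supseteq[U_{d-1},\mathfrak{g}]\cup[\mathfrak{g},U_{d-1}]$ directly, whereas you keep things at the level of $B$ and argue that $[V,B]\subseteq V$ forces $[V,\mathfrak{g}]\subseteq V$; these are the same argument. (The condition $v\in V$ in your claim is unnecessary but harmless.)
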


\begin{proof}
The bilinearity of $[\cdot,\cdot]$ implies directly that for all sets $X,Y\subseteq\mathfrak{g}$ we have
\begin{equation}\label{eq:spanbra}
[\mathrm{span}_{K}(X),\mathrm{span}_{K}(Y)]=\mathrm{span}_{K}([X,Y]).
\end{equation}
Define $V_{k}:=\mathrm{span}_{K}(\mathcal{T}_{\leq k}(v,A^{[\leq s(D)]}))$ for all $k\geq 1$: we have $V_{k}\subseteq V_{k+1}$, so it is sufficient to prove the statement for all $1\leq d\leq D$.

We argue by induction. For $d=1$ the statement is trivial since $v\neq 0$. Suppose now that the result holds for $d-1$. The set $A^{[\leq s(D)]}$ spans the whole $\mathfrak{g}$ by Proposition~\ref{pr:span}, and \eqref{eq:spanbra} yields
\begin{equation*}
V_{d}\supseteq\mathrm{span}_{K}([V_{d-1},A^{[\leq s(D)]}])\cup\mathrm{span}_{K}([A^{[\leq s(D)]},V_{d-1}])=[V_{d-1},\mathfrak{g}]\cup[\mathfrak{g},V_{d-1}].
\end{equation*}
As $\mathfrak{g}$ is simple, we must have either $V_{d-1}=\mathfrak{g}$ or $[V_{d-1},\mathfrak{g}]\cup[\mathfrak{g},V_{d-1}]\not\subseteq V_{d-1}$. In the first case we get $V_{d}=\mathfrak{g}$ and we are done. In the second case, since we already know that $V_{d}\supseteq V_{d-1}$, we obtain that $V_{d}\supsetneq V_{d-1}$ concluding the inductive step.
\end{proof}

\subsection{Span growth for tower algebras}\label{se:escturr}

When $\mathfrak{g}$ is a tower algebra, the process of \S\ref{se:escgen} becomes quicker.

\begin{proposition}\label{pr:spanturr}
Let $\mathfrak{g}$ be a tower algebra over a field $K$ of dimension $D<\infty$, and let $A\subseteq\mathfrak{g}$ be a symmetric set such that $\mathrm{span}_{K}(\langle A\rangle)=\mathfrak{g}$.

Then, for any $d\geq 1$, the set $\mathcal{T}_{\leq d}(A)$ spans a $K$-vector space of dimension $\geq\min\{d,D\}$.
\end{proposition}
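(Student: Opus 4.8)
The plan is to mirror the proof of Corollary~\ref{co:vspan}, but using turrifiability to avoid the element-by-element construction of towers from a single fixed $v$; instead we build towers directly out of $A$. Define $V_{k}:=\mathrm{span}_{K}(\mathcal{T}_{\leq k}(A))$ for all $k\geq 1$. Since $\mathcal{T}_{\leq k}(A)\subseteq\mathcal{T}_{\leq k+1}(A)$ we have $V_{k}\subseteq V_{k+1}$, so it suffices to prove the statement for $1\leq d\leq D$. First I would argue by induction on $d$: the case $d=1$ is trivial because $\langle A\rangle=\mathfrak{g}$ forces $A$ to contain a nonzero element. For the inductive step, suppose $\dim(V_{d-1})\geq\min\{d-1,D\}$.

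The key point is to bound $V_{d}$ from below. Using the bilinearity identity \eqref{eq:spanbra} and the definition $\mathcal{T}_{d}(A)\supseteq[\mathcal{T}_{d-1}(A),A]\cup[A,\mathcal{T}_{d-1}(A)]$, we get
\begin{equation*}
V_{d}\supseteq[V_{d-1},\mathrm{span}_{K}(A)]\cup[\mathrm{span}_{K}(A),V_{d-1}].
\end{equation*}
Now the obstacle — and the place where turrifiability must enter — is that $\mathrm{span}_{K}(A)$ need not be all of $\mathfrak{g}$: Proposition~\ref{pr:span} only tells us that a small \emph{bracket-power} $A^{[\leq s(D)]}$ spans $\mathfrak{g}$, not $A$ itself. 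The turrifiability hypothesis is precisely what lets us replace such bracket-powers by towers. Concretely, for any $x\in A$ and any $Y=A$, turrifiability gives $\mathcal{S}_{\leq k}(x,A)\subseteq\mathrm{span}_{K}(\mathcal{T}_{\leq k}(x,A))$; combined with \eqref{eq:braspan} and the observation that $\mathcal{T}_{\leq k}(x,A)\subseteq\mathcal{T}_{\leq k}(A)$, one deduces that $\mathrm{span}_{K}(\mathcal{T}_{\leq k}(A))$ already contains $\mathrm{span}_{K}(A^{[\leq k]})$ — so $\mathcal{T}_{\leq s(D)}(A)$ spans $\mathfrak{g}$ by Proposition~\ref{pr:span}. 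This is the technical heart of the argument and the step I expect to require the most care, since one must track how an arbitrary bracketed expression in $A$ gets rewritten as a $K$-linear combination of towers (the same bookkeeping as in Proposition~\ref{pr:liemalcev}, invoked via the definition of turrifiability).

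Once this is in hand, I would proceed as in Corollary~\ref{co:vspan}: for $d\geq 2$ we may assume (enlarging indices harmlessly, since $V_{k}$ is monotone) that $V_{d-1}\supseteq\mathrm{span}_{K}(\mathcal{T}_{\leq s(D)}(A))=\mathfrak{g}$ whenever $d-1\geq s(D)$, handling the remaining small cases separately; more cleanly, one bootstraps: the inclusion $V_{d}\supseteq[V_{d-1},\mathfrak{g}]\cup[\mathfrak{g},V_{d-1}]$ holds as soon as $V_{k_{0}}=\mathfrak{g}$ for some $k_{0}$ and $d-1\geq k_{0}$, after which simplicity forces either $V_{d-1}=\mathfrak{g}$ (done) or $[V_{d-1},\mathfrak{g}]\cup[\mathfrak{g},V_{d-1}]\not\subseteq V_{d-1}$, giving $V_{d}\supsetneq V_{d-1}$ and hence $\dim(V_{d})\geq\dim(V_{d-1})+1$. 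Iterating from the base $V_{1}\neq\{0\}$ yields $\dim(V_{d})\geq\min\{d,D\}$. The only subtlety is to confirm that the span-containment $\mathcal{T}_{\leq k}(A)\supseteq$-span of $A^{[\leq k]}$ kicks in early enough that the dimension increments are not stalled; but since turrifiability is assumed to hold for \emph{all} $k\geq 1$, this is automatic, and no characteristic or associativity restriction is needed beyond turrifiability itself.
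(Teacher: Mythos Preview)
There is a genuine gap, and it lies precisely in the last paragraph. You invoke simplicity (``simplicity forces either $V_{d-1}=\mathfrak{g}$ or $[V_{d-1},\mathfrak{g}]\cup[\mathfrak{g},V_{d-1}]\not\subseteq V_{d-1}$''), but Proposition~\ref{pr:spanturr} does \emph{not} assume $\mathfrak{g}$ is simple --- only turrifiable. More fundamentally, the inclusion $V_{d}\supseteq[V_{d-1},\mathfrak{g}]\cup[\mathfrak{g},V_{d-1}]$ that you need for that step is never established and is in general false: by definition $\mathcal{T}_{d}(A)=[\mathcal{T}_{d-1}(A),A]\cup[A,\mathcal{T}_{d-1}(A)]$, so $V_{d}$ only contains $[V_{d-1},\mathrm{span}_{K}(A)]\cup[\mathrm{span}_{K}(A),V_{d-1}]$. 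Knowing that $V_{k_{0}}=\mathfrak{g}$ for some $k_{0}$ does not help, since $[\mathcal{T}_{j}(A),\mathcal{T}_{k_{0}}(A)]$ need not lie in $\mathcal{T}_{j+k_{0}}(A)$ when $k_{0}\geq 2$. And even were the inclusion true, the bootstrap would only start producing increments once $d-1\geq s(D)=2^{D-1}$, far too late for the range $d\leq D$ that matters.

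The fix is already implicit in what you wrote. Your middle paragraph correctly derives the key fact $V_{k}\supseteq\mathrm{span}_{K}(A^{[\leq k]})$ from turrifiability (via $A^{[k]}\subseteq\bigcup_{x\in A}\mathcal{S}_{k}(x,A)$ and $\mathcal{T}_{\leq k}(x,A)\subseteq\mathcal{T}_{\leq k}(A)$). Use it differently: assume for contradiction that $\dim(V_{d})<d\leq D$, so by induction $V_{d}=V_{d-1}$; this means $[V_{d-1},A]\cup[A,V_{d-1}]\subseteq V_{d-1}$, whence $V_{d'}=V_{d-1}$ for all $d'\geq d$. But your key fact gives $V_{d'}\supseteq\mathrm{span}_{K}(A^{[\leq d']})=\mathfrak{g}$ for $d'$ large, forcing $\dim(V_{d-1})=D\geq d$, a contradiction. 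This is exactly the paper's argument, and it avoids both simplicity and the unavailable inclusion.
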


\begin{proof}
Define $W_{k}:=\mathrm{span}_{K}(\mathcal{T}_{\leq k}(A))$ for all $k\geq 1$. Arguing as in Proposition~\ref{pr:span}, we reduce to the case in which $1\leq d\leq D$, the result holds for $d-1$, and $\dim(W_{d})<d$. This must imply that $W_{d}=W_{d-1}$, and in particular that $[W_{d-1},A]\cup[A,W_{d-1}]\subseteq W_{d-1}$. Then
\begin{align*}
W_{d+1} & =\mathrm{span}_{K}(\mathcal{T}_{\leq d+1}(A))=\mathrm{span}_{K}(\mathcal{T}_{\leq d}(A)\cup\mathcal{T}_{d+1}(A)) \\
 & \subseteq\mathrm{span}_{K}(W_{d}\cup[W_{d},A]\cup[A,W_{d}]) \\
 & =\mathrm{span}_{K}(W_{d-1}\cup[W_{d-1},A]\cup[A,W_{d-1}])\subseteq W_{d-1}.
\end{align*}
Hence, $W_{d'}=W_{d-1}$ for all $d'\geq d$.

As already said, \eqref{eq:braspan} implies that $\mathfrak{g}=\mathrm{span}_{K}(A^{[\leq d']})$ for $d'$ large enough. However, since $\mathfrak{g}$ is a tower algebra we get in particular $A^{[\leq d']}\subseteq W_{d'}$; therefore $\mathfrak{g}=W_{d'}$ for $d'$ large enough, and then $D<d$, which is a contradiction. Thus, $\dim(W_{d})\geq d$.
\end{proof}

\begin{corollary}\label{co:vspanturr}
Let $\mathfrak{g}$ be a simple tower algebra over a field $K$ of dimension $D<\infty$, and let $A\subseteq\mathfrak{g}$ be a symmetric set such that $\mathrm{span}_{K}(\langle A\rangle)=\mathfrak{g}$. Let $v\neq 0$ be an element of $\mathfrak{g}$.

Then, for any $d\geq 1$, the set $\mathcal{T}_{\leq d+D}(v,A)$ spans a $K$-vector space of dimension $\geq\min\{d,D\}$.
\end{corollary}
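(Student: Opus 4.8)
The plan is to mimic the proof of Corollary~\ref{co:vspan}, but replacing the use of Proposition~\ref{pr:span} with the faster Proposition~\ref{pr:spanturr}, and arranging the bookkeeping so that the extra $D$ steps suffice. First I would apply Proposition~\ref{pr:spanturr}: since $\mathfrak{g}$ is turrifiable and $\langle A\rangle=\mathfrak{g}$, the set $\mathcal{T}_{\leq D}(A)$ already spans all of $\mathfrak{g}$ as a $K$-vector space. This is the analogue of the statement ``$A^{[\leq s(D)]}$ spans $\mathfrak{g}$'' used in Corollary~\ref{co:vspan}, but now only $D$ nested brackets (rather than $2^{D-1}$) are needed.

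Next I would set $V_k:=\mathrm{span}_K(\mathcal{T}_{\leq k+D}(v,A))$ and argue by induction on $d$ that $\dim(V_d)\geq\min\{d,D\}$, which suffices to prove the statement for $1\leq d\leq D$ (the case $d>D$ following since $V_k\subseteq V_{k+1}$ and the span is capped at $D$). The base case $d=1$ is immediate because $v\neq 0$, so $\mathcal{T}_{\leq 1+D}(v,A)\ni v$. For the inductive step, suppose $\dim(V_{d-1})\geq\min\{d-1,D\}$. Using \eqref{eq:spanbra} and the fact that $\mathcal{T}_{k}(v,A)$ contains the brackets $[\mathcal{T}_{k-1}(v,A),A]$ and $[A,\mathcal{T}_{k-1}(v,A)]$ (taking the ``$\mathcal{T}_{k-1}(Y)$ replaced by $X$'' branch with $Y=A$, noting $\mathcal{T}_{k-1}(A)\subseteq\mathcal{T}_{\leq D}(A)$ spans $\mathfrak{g}$ once $k-1\leq\ldots$), I would show
\begin{equation*}
V_d\supseteq\mathrm{span}_K([\mathcal{T}_{\leq d-1+D}(v,A),\mathcal{T}_{\leq D}(A)])\cup\mathrm{span}_K([\mathcal{T}_{\leq D}(A),\mathcal{T}_{\leq d-1+D}(v,A)]),
\end{equation*}
and since $\mathcal{T}_{\leq D}(A)$ spans $\mathfrak{g}$, the right-hand side equals $[V_{d-1},\mathfrak{g}]\cup[\mathfrak{g},V_{d-1}]$. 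Then simplicity gives the dichotomy: either $V_{d-1}=\mathfrak{g}$, whence $V_d=\mathfrak{g}$ and we are done, or $[V_{d-1},\mathfrak{g}]\cup[\mathfrak{g},V_{d-1}]\not\subseteq V_{d-1}$, whence $V_d\supsetneq V_{d-1}$ and so $\dim(V_d)\geq\dim(V_{d-1})+1\geq d$.

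The only delicate point — the ``main obstacle'', such as it is — is the index bookkeeping: I need to be sure that the brackets $[\mathcal{T}_{j}(v,A),\mathcal{T}_{\ell}(A)]$ with $j\leq d-1+D$ and $\ell\leq D$ that appear in the argument are genuinely contained in $\mathcal{T}_{\leq d+D}(v,A)$. This requires checking that the recursive definition of $\mathcal{T}_k(X,Y)$ (with $X=v$, $Y=A$) does produce, via its third and fourth branches $[\mathcal{T}_{k-1}(Y),X]$, $[X,\mathcal{T}_{k-1}(Y)]$ together with iterated first/second branches, all towers of the shape ``a tower in $v$ and $A$ of length $\leq d-1+D$, bracketed against a tower in $A$ of length $\leq D$'' — equivalently, that attaching up to $D$ further $A$-brackets to an element of $\mathcal{T}_{\leq d-1+D}(v,A)$ stays within $\mathcal{T}_{\leq d+D}(v,A)$, which is a routine induction on the tower structure. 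Everything else is a verbatim adaptation of Corollary~\ref{co:vspan}.
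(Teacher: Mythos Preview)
There is a genuine gap in the bookkeeping step you flag as ``routine''. You want
\[
[\mathcal{T}_{j}(v,A),\mathcal{T}_{\ell}(A)]\subseteq\mathrm{span}_{K}(\mathcal{T}_{\leq d+D}(v,A))
\]
for all $j\leq d-1+D$ and $\ell\leq D$. But even granting turrifiability, the best you can get is that such a bracket lies in $\mathcal{S}_{j+\ell}(v,A)\subseteq\mathrm{span}_{K}(\mathcal{T}_{\leq j+\ell}(v,A))$, and $j+\ell$ ranges up to $(d-1+D)+D=d+2D-1$, i.e.\ you land in $V_{d+D-1}$, not in $V_{d}$. Your final sentence, ``attaching up to $D$ further $A$-brackets to an element of $\mathcal{T}_{\leq d-1+D}(v,A)$ stays within $\mathcal{T}_{\leq d+D}(v,A)$'', is simply false: each additional bracket raises the index by one, so $D$ brackets take you to $\mathcal{T}_{\leq d-1+2D}(v,A)$. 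The direct approach of Corollary~\ref{co:vspan} worked there only because $A^{[\leq s(D)]}$ was fixed and each step added a \emph{single} bracket with something in $A^{[\leq s(D)]}$; here your ``single step'' costs up to $D$ indices, and you do not have that much room.

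The paper's proof avoids this by arguing by contradiction rather than directly. Assuming $V_{d-1}=V_{d}\subsetneq\mathfrak{g}$, it first shows from the recursive definition of $\mathcal{T}_{k}(v,A)$ (and the fact that $\mathrm{span}_{K}(\mathcal{T}_{\leq d+D}(A))=\mathrm{span}_{K}(\mathcal{T}_{\leq d+D-1}(A))=\mathfrak{g}$) that $V_{d+1}\subseteq V_{d}$, hence $V_{d'}=V_{d}$ for all $d'\geq d$. Only \emph{after} this stabilization does it invoke turrifiability: since $\mathrm{span}_{K}(A^{D})=\mathfrak{g}$, one has $[\mathfrak{g},V_{d}]\cup[V_{d},\mathfrak{g}]\subseteq\mathrm{span}_{K}(\mathcal{S}_{\leq d+2D}(v,A))\subseteq\mathrm{span}_{K}(\mathcal{T}_{\leq d+2D}(v,A))=V_{d+D}=V_{d}$, contradicting simplicity. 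The point is that the extra $D$ indices you were missing are absorbed by the stabilization, not by the original inequality.
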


\begin{proof}
Define $V_{k}:=\mathrm{span}_{K}(\mathcal{T}_{\leq k+D}(v,A))$ for all $k\geq 1$. Arguing as in Corollary~\ref{co:vspan}, we reduce to the case in which $1\leq d\leq D$ and the result holds for $d-1$. If $V_{d-1}\subsetneq V_{d}$ or $V_{d}=\mathfrak{g}$ we are done, so assume that $V_{d-1}=V_{d}\subsetneq\mathfrak{g}$. By Proposition~\ref{pr:spanturr}, we have $\mathrm{span}_{K}(\mathcal{T}_{\leq d+D}(A))=\mathrm{span}_{K}(\mathcal{T}_{\leq d+D-1}(A))=\mathfrak{g}$. Similarly to the proof of Proposition~\ref{pr:spanturr}, we get
\begin{align*}
V_{d+1} & \subseteq\mathrm{span}_{K}(V_{d}\cup[V_{d},A]\cup[A,V_{d}]\cup[\mathcal{T}_{\leq d+D}(A),v]\cup[v,\mathcal{T}_{\leq d+D}(A)]) \\
 & =\mathrm{span}_{K}(V_{d-1}\cup[V_{d-1},A]\cup[A,V_{d-1}]\cup[\mathcal{T}_{\leq d+D-1}(A),v]\cup[v,\mathcal{T}_{\leq d+D-1}(A)]) \\
 & \subseteq V_{d}
\end{align*}
and then $V_{d'}=V_{d-1}$ for all $d'\geq d$. As $\mathrm{span}_{K}(A^{D})\supseteq\mathrm{span}_{K}(\mathcal{T}_{\leq D}(A))=\mathfrak{g}$ again by Proposition~\ref{pr:spanturr} and $\mathfrak{g}$ is a tower algebra, we obtain in particular
\begin{align*}
\mathrm{span}_{K}([\mathfrak{g},V_{d}]\cup[V_{d},\mathfrak{g}]) & =\mathrm{span}_{K}([A^{D},V_{d}]\cup[V_{d},A^{D}])\subseteq\mathrm{span}_{K}(\mathcal{S}_{\leq d+2D}(v,A)) \\
 & \subseteq\mathrm{span}_{K}(\mathcal{T}_{\leq d+2D}(v,A))=V_{d+D}=V_{d}.
\end{align*}
This contradicts the simplicity of $\mathfrak{g}$. Thus, $\dim(V_{d})\geq\min\{d,D\}$.
\end{proof}

\begin{remark}\label{re:vspanlie}
If $\mathfrak{g}$ is a Lie algebra, by setting up a more elaborate induction one can improve Corollary~\ref{co:vspanturr} to hold with $\mathcal{T}_{\leq d}(v,A)$ instead of $\mathcal{T}_{\leq d+D}(v,A)$. This leads to quantitative (but not qualitative) improvements in the final bounds.
\end{remark}

\subsection{Other escape results}\label{se:escmore}

Thanks to Proposition~\ref{pr:spanturr} we can quickly escape from a $K$-vector subspace. We conclude by proving an escape (or rather, a non-escape) result for two specific expressions. We need this result only in \S\ref{se:genericgoodmap} for simple Lie algebras, but the proof is completely general. In algebraic-geometric terms, it is just an application of B\'ezout's theorem.

\begin{lemma}\label{le:quadresc}
Let $\mathfrak{g}$ be an algebra over a field $K$. Let $x,y,z_{1},z_{2}$ be nonzero elements of $\mathfrak{g}$.
\begin{enumerate}[(a)]
\item\label{le:quadresclin} Define $f:\mathfrak{g}\rightarrow\mathfrak{g}$ as $f(z)=[[z,x],y]$. If there are two distinct $k,k'\in K$ such that $f(z_{1}+kz_{2})=f(z_{1}+k'z_{2})=0$, then $f(Kz_{1}+Kz_{2})=\{0\}$.
\item\label{le:quadrescquadr} Define $g:\mathfrak{g}\rightarrow\mathfrak{g}$ as $g(z)=[[z,x],[z,y]]$. If there are three distinct $k,k',k''\in K$ such that $g(z_{1}+kz_{2})=g(z_{1}+k'z_{2})=g(z_{1}+k''z_{2})=0$, then $g(Kz_{1}+Kz_{2})=\{0\}$.
\item\label{le:quadresctot} Let $\dim(\mathfrak{g})<\infty$, and let $\{s_{i}\}_{i}$ be a basis of $\mathfrak{g}$ as a $K$-vector space. If $g(z)=0$ for all $z=s_{i}$ and all $z=s_{i}+s_{j}$, then $g(\mathfrak{g})=\{0\}$.
\end{enumerate}
\end{lemma}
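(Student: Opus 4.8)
The plan is to treat the three parts as successive applications of the same principle: a polynomial in one or two variables that vanishes at more points than its degree must vanish identically. Parts \eqref{le:quadresclin} and \eqref{le:quadrescquadr} are the one-variable version along the line $Kz_1 + Kz_2$, and part \eqref{le:quadresctot} is the two-variable (multilinear) version over all of $\mathfrak{g}$. Throughout, I would fix an arbitrary linear functional $\varphi\colon\mathfrak{g}\to K$ and study $\varphi\circ f$ or $\varphi\circ g$ as an honest polynomial with coefficients in $K$; since $\varphi$ is arbitrary, vanishing of all such scalar polynomials forces vanishing of the $\mathfrak{g}$-valued map. (One has to be slightly careful here if $K$ is too small to separate points, but a linear map into $\mathfrak{g}$ is zero iff each coordinate in a fixed basis is zero, and those coordinates are linear functionals, so this is harmless.)

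For \eqref{le:quadresclin}: the map $z\mapsto f(z_1+tz_2)=[[z_1+tz_2,x],y]$ is, by bilinearity of $[\cdot,\cdot]$ applied twice, an \emph{affine} function of $t$, i.e.\ $f(z_1+tz_2) = f(z_1) + t\,[[z_2,x],y]$. An affine function $K\to\mathfrak{g}$ vanishing at two distinct points $k\neq k'$ is identically zero: subtracting gives $(k-k')[[z_2,x],y]=0$, hence $[[z_2,x],y]=0$ since $k-k'\in K^\times$, and then $f(z_1)=0$ too; so $f(z_1+tz_2)=0$ for all $t\in K$, which together with $f(z_2)=0$ and bilinearity gives $f(az_1+bz_2)=0$ for all $a,b$ (the case $a=0$ being covered by $f(z_2)=0$, the case $a\neq 0$ by rescaling). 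For \eqref{le:quadrescquadr}: now $g(z_1+tz_2)=[[z_1+tz_2,x],[z_1+tz_2,y]]$ expands, again purely by bilinearity, into a polynomial in $t$ of degree $\leq 2$ with coefficients in $\mathfrak{g}$; vanishing at three distinct values $k,k',k''$ kills a degree-$\leq 2$ polynomial over a field (Vandermonde / the standard fact that a nonzero degree-$d$ polynomial has $\leq d$ roots), so all three coefficients vanish, hence $g$ vanishes on the whole line $Kz_1+Kz_2$ exactly as before.

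For \eqref{le:quadresctot}: write $z=\sum_i c_i s_i$ and expand $g(z)=[[z,x],[z,y]]$ by bilinearity into $\sum_{i,j} c_i c_j\, [[s_i,x],[s_j,y]]$, a quadratic form in the $c_i$ with coefficients $B_{ij}:=[[s_i,x],[s_j,y]]\in\mathfrak{g}$. The hypotheses say: $g(s_i)=B_{ii}=0$ for every $i$, and $g(s_i+s_j)=B_{ii}+B_{ij}+B_{ji}+B_{jj}=0$ for every pair, which combined with $B_{ii}=B_{jj}=0$ gives $B_{ij}+B_{ji}=0$ for all $i\neq j$. Then for general $z=\sum c_i s_i$ we get $g(z)=\sum_i c_i^2 B_{ii} + \sum_{i<j} c_i c_j (B_{ij}+B_{ji}) = 0$. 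Since $\{s_i\}$ is a basis this exhausts all $z\in\mathfrak{g}$, so $g\equiv 0$.

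I don't expect any real obstacle: the whole lemma is bilinearity plus "few roots $\Rightarrow$ zero polynomial," which is the elementary substitute for Bézout the authors advertise. The only point deserving a line of care is ensuring that a $\mathfrak{g}$-valued polynomial identity can be checked functional-by-functional (equivalently, coordinate-by-coordinate in a basis), so that the one-variable root-counting fact — which is stated for scalar polynomials — applies coefficientwise; once that reduction is noted, each part is a two-line computation.
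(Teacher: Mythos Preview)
Your proof is correct and takes essentially the same approach as the paper: both arguments expand by bilinearity and use that an affine (resp.\ degree-$2$) expression vanishing at two (resp.\ three) distinct points is identically zero, and for part~(c) both extract $B_{ij}+B_{ji}=0$ from $g(s_i+s_j)-g(s_i)-g(s_j)$. The only cosmetic difference is that the paper does the computations explicitly (writing $az_1+bz_2$ in the basis $z_1+kz_2,\,z_1+k'z_2$ and isolating the cross term directly), whereas you invoke the ``few roots $\Rightarrow$ zero polynomial'' principle via coordinate functionals; the content is identical.
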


\begin{proof}
Let $a,b\in K$. We can write
\begin{equation*}
az_{1}+bz_{2}=\lambda(z_{1}+kz_{2})+\mu(z_{1}+k'z_{2})
\end{equation*}
with $\lambda=\frac{b-ak'}{k-k'}$ and $\mu=\frac{b-ak}{k'-k}$. Using bilinearity twice and the hypothesis, we obtain
\begin{equation*}
f(az_{1}+bz_{2})=\lambda f(z_{1}+kz_{2})+\mu f(z_{1}+k'z_{2})=0,
\end{equation*}
and \eqref{le:quadresclin} follows.

Now let us write
\begin{equation*}
z_{1}+k''z_{2}=t(z_{1}+kz_{2})+(1-t)(z_{1}+k'z_{2})
\end{equation*}
with $t=\frac{k''-k'}{k-k'}\in K\setminus\{0,1\}$. By bilinearity and the hypothesis we have then
\begin{align*}
0 & =g(z_{1}+k''z_{2})-t^{2}g(z_{1}+kz_{2})-(1-t)^{2}g(z_{1}+k'z_{2}) \\
 & =t(1-t)([[z_{1}+kz_{2},x],[z_{1}+k'z_{2},y]]+[[z_{1}+k'z_{2},x],[z_{1}+kz_{2},y]]),
\end{align*}
which implies
\begin{equation*}
[[z_{1}+kz_{2},x],[z_{1}+k'z_{2},y]]=-[[z_{1}+k'z_{2},x],[z_{1}+kz_{2},y]].
\end{equation*}
Then, much like before we obtain
\begin{align*}
g(az_{1}+bz_{2})= & \ \lambda^{2}g(z_{1}+kz_{2})+\mu^{2}g(z_{1}+k'z_{2}) \\
 & \ +\lambda\mu([[z_{1}+kz_{2},x],[z_{1}+k'z_{2},y]]+[[z_{1}+k'z_{2},x],[z_{1}+kz_{2},y]]).
\end{align*}
All the summands on the right hand side are $0$, and \eqref{le:quadrescquadr} follows.

Finally, we have
\begin{equation*}
[[s_{i},x],[s_{j},y]]+[[s_{j},x],[s_{i},y]]=g(s_{i}+s_{j})-g(s_{i})-g(s_{j})=0.
\end{equation*}
Let $z=\sum_{i}a_{i}s_{i}$ with $a_{i}\in K$. We reason by induction on the number of nonzero $a_{i}$. If there is one nonzero $a_{i}$, we are already done by the hypothesis. Up to reordering the basis elements, say that $z=\sum_{i=1}^{k}a_{i}s_{i}$ with $k\geq 2$, and assume that the statement is proved for $k-1$. Then
\begin{align*}
g(z) & =g\left(\sum_{i<k}a_{i}s_{i}\right)+\sum_{i<k}[[a_{i}s_{i},x],[a_{k}s_{k},y]]+\sum_{i<k}[[a_{k}s_{k},x],[a_{i}s_{i},y]]+g(a_{k}s_{k}) \\
 & =\sum_{i<k}a_{i}a_{k}([[s_{i},x],[s_{k},y]]+[[s_{k},x],[s_{i},y]])=0,
\end{align*}
and the inductive step is proved.
\end{proof}

\section{Dimensional estimates}\label{se:dimest}

The concept of dimensional estimate goes back to \cite{LP11}, and it rapidly acquired a central position in growth results for groups of Lie type: see \cite[Thm.~4.1]{BGT11}, \cite[Thm.~40]{PS16}, and \cite[Thms.~4.3-5.6]{BDH21}. We provide here an estimate of our own, for simple and tower algebras: the main result of this section is Theorem~\ref{th:main0}.

Let us fix some lexicon. We have already defined linear affine spaces. A function $f:K^{m}\rightarrow K^{m'}$ between $K$-vector spaces is a \textit{linear affine map} if it is defined by polynomials of degree $1$ (not necessarily homogeneous); we use the term \textit{linear homogeneous map} to indicate that the polynomials are also homogeneous. Equivalently, $f$ is linear homogeneous if $f(k_{1}x_{1}+k_{2}x_{2})=k_{1}f(x_{1})+k_{2}f(x_{2})$ for all $x_{1},x_{2}\in K^{m}$ and all $k_{1},k_{2}\in K$. Given any function $f:S\rightarrow T$ between two sets, a subset $S'\subseteq S$ and an element $t\in T$, the \textit{fibre of $t$ in $S'$ through $f$} is the set
\begin{equation*}
f^{-1}(t)\cap S'=\{s\in S'|f(s)=t\}.
\end{equation*}

The next two results barely need proof. Note how elementary Proposition~\ref{pr:linearmap} is when compared for instance to \cite[Cor.~A.3]{BDH21}.

\begin{proposition}\label{pr:linearmap}
Let $X\subseteq K^{m}$ be a linear affine space over a field $K$, and let $f:K^{m}\rightarrow K^{m'}$ be a linear affine map. Then $f(X)$ is a linear affine space of $K^{m'}$, and every nonempty fibre in $X$ through $f$ is a linear affine space $Y$ of $K^{m}$ with $\dim(Y)=\dim(X)-\dim(f(X))$.
\end{proposition}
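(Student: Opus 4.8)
The statement is a direct translation of the rank–nullity theorem into the language of linear affine spaces, so the plan is to reduce everything to the homogeneous linear case and then quote rank–nullity. First I would dispose of the claim that $f(X)$ is a linear affine space: writing $f = L + c$ with $L$ linear homogeneous and $c$ a constant vector, and writing $X = V + x_0$ for $V$ a $K$-vector subspace and $x_0 \in X$, one checks directly that $f(X) = L(V) + (L(x_0) + c)$. Since $L(V)$ is a $K$-vector subspace of $K^{m'}$ (the image of a subspace under a linear homogeneous map), $f(X)$ is its translate, hence a linear affine space, with $\dim f(X) = \dim L(V)$. This also shows, via the characterization in \S1.2, that $f(X)$ is closed under the operation $(y,y',k)\mapsto ky+(1-k)y'$, which is the only thing one needs to verify if one prefers to avoid choosing bases.

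For the fibre statement, fix $t \in f(X)$ with $f^{-1}(t)\cap X \neq \emptyset$, and pick $s_0$ in this fibre. For any $s \in K^m$ one has $f(s) = t$ iff $L(s - s_0) = 0$, i.e.\ iff $s - s_0 \in \ker L$; and $s \in X$ iff $s - s_0 \in V$ (using $s_0 \in X$). Hence $f^{-1}(t)\cap X = s_0 + (V \cap \ker L)$, which is the translate of the $K$-vector subspace $V \cap \ker L$, so it is a linear affine space of dimension $\dim(V \cap \ker L)$. It remains to identify this dimension. Apply rank–nullity to the restriction $L|_V : V \to K^{m'}$: its image is $L(V)$, so $\dim V = \dim L(V) + \dim\ker(L|_V) = \dim L(V) + \dim(V \cap \ker L)$. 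Combining with $\dim X = \dim V$ and $\dim f(X) = \dim L(V)$ gives $\dim(f^{-1}(t)\cap X) = \dim(X) - \dim(f(X))$, as claimed.

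There is essentially no obstacle here — every step is linear algebra — but the one point deserving a word of care is making sure the various translates are consistent: the base point $x_0$ of $X$, the base point $s_0$ of the fibre, and the base point $L(x_0)+c$ of $f(X)$ are all chosen independently, and one should check (as the excerpt already notes in \S1.2) that the dimension of a linear affine space does not depend on the choice of base point, so the identities above are well posed. Once that is granted, the proof is just the three displayed identities $f(X) = L(V) + (L(x_0)+c)$, $f^{-1}(t)\cap X = s_0 + (V\cap\ker L)$, and the rank–nullity relation for $L|_V$.
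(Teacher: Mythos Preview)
Your proposal is correct and follows essentially the same approach as the paper: reduce to the linear homogeneous case by stripping off translations (the paper does this by ``composition with appropriate translations'', you by writing $f=L+c$ and $X=V+x_0$ explicitly), then invoke rank--nullity for $L|_V$ and translate the zero fibre to an arbitrary fibre. The only cosmetic difference is that you carry the base points through the computation rather than assuming them away at the outset.
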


\begin{proof}
Up to composition with appropriate translations, we may assume that $X$ is a $K$-vector subspace and that $f$ is linear homogeneous. Then every $K$-linear combination of points in $f(X)$ is contained in $f(X)$, meaning that $f(X)$ is a vector subspace. The set $f^{-1}(0)$ is also a vector subspace, and so is the fibre $f^{-1}(0)\cap X$, which has dimension $\dim(X)-\dim(f(X))$ by the rank-nullity theorem. Finally, for any $y\in f(X)$ fix $x_{0}\in X$ such that $f(x_{0})=y$: since $X+x_{0}=X$ we obtain
\begin{align*}
f^{-1}(y)\cap X & =\{x\in X|f(x)=y\}=\{x\in X|f(x-x_{0})=f(x)-y=0\} \\
 & =\{x\in X|f(x)=0\}+x_{0}=(f^{-1}(0)\cap X)+x_{0},
\end{align*}
so every nonempty fibre is a linear affine space too and has the same dimension $\dim(X)-\dim(f(X))$.
\end{proof}

\begin{proposition}\label{pr:imfibre}
Let $X\subseteq K^{m}\times K^{m}$ be a linear affine space over a field $K$, and let $f:K^{m}\times K^{m}\rightarrow K^{m}$ be a linear affine map. Let $A\subseteq K^{m}$ be a finite set such that $f(A^{t_{1}}\times A^{t_{2}})\subseteq A^{k}$ for certain constants $t_{1},t_{2},k$. Then
\begin{equation*}
|(A^{t_{1}}\times A^{t_{2}})\cap X|\leq|A^{k}\cap f(X)|\cdot|(A^{t_{1}}\times A^{t_{2}})\cap W|,
\end{equation*}
where $f(X)$ is a linear affine space, and where $W=\{x\in X|f(x)=y\}$ for some $y\in K^{m}$, with $W$ a linear affine space and with $\dim(W)=\dim(X)-\dim(f(X))$.
\end{proposition}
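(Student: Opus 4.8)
The plan is to prove Proposition~\ref{pr:imfibre} as a direct fibre-counting argument, using Proposition~\ref{pr:linearmap} as the geometric input. First I would apply Proposition~\ref{pr:linearmap} to the linear affine space $X\subseteq K^m\times K^m$ and the linear affine map $f$: this immediately gives that $f(X)$ is a linear affine space, that every nonempty fibre $f^{-1}(y)\cap X$ is a linear affine space, and that all such nonempty fibres have the common dimension $\dim(X)-\dim(f(X))$. Fix one such fibre and call it $W$; by the translation part of the proof of Proposition~\ref{pr:linearmap}, any two nonempty fibres differ by a translation, so in particular $|(A^{t_1}\times A^{t_2})\cap(f^{-1}(y)\cap X)|$ is bounded — though not necessarily equal — for each $y$, and it is this uniform structure that we will exploit.

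The core step is the partition of the counted set by fibres of $f$. I would write
\begin{equation*}
(A^{t_1}\times A^{t_2})\cap X=\bigsqcup_{y\in K^m}\bigl((A^{t_1}\times A^{t_2})\cap X\cap f^{-1}(y)\bigr),
\end{equation*}
and observe that a value $y$ contributes a nonempty piece only if $y\in f((A^{t_1}\times A^{t_2})\cap X)$, which by hypothesis is contained in $f(A^{t_1}\times A^{t_2})\cap f(X)\subseteq A^k\cap f(X)$. Hence the number of values $y$ that contribute is at most $|A^k\cap f(X)|$. For each contributing $y$, the corresponding piece is $(A^{t_1}\times A^{t_2})\cap(X\cap f^{-1}(y))$, and since all nonempty fibres $X\cap f^{-1}(y)$ are translates of the single fibre $W$, we want to bound each such piece by $|(A^{t_1}\times A^{t_2})\cap W|$. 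Summing over the at most $|A^k\cap f(X)|$ contributing values then yields exactly the claimed inequality.

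The one point that requires care — and is the only real obstacle — is justifying that $|(A^{t_1}\times A^{t_2})\cap(X\cap f^{-1}(y))|\leq|(A^{t_1}\times A^{t_2})\cap W|$ for an \emph{arbitrary} contributing fibre, not just the one we designated as $W$; a priori $A^{t_1}\times A^{t_2}$ is not translation-invariant, so a translate of $W$ could meet it in more points than $W$ does. This is handled exactly as in the analogous step of \cite[Thm.~A.7]{BDH21}: one chooses $W$ to be the fibre that maximizes $|(A^{t_1}\times A^{t_2})\cap(X\cap f^{-1}(y))|$ over all $y$, i.e.\ $W=X\cap f^{-1}(y_0)$ where $y_0\in f((A^{t_1}\times A^{t_2})\cap X)$ realizes the maximum. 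With this choice $W$ is a linear affine space of the asserted dimension $\dim(X)-\dim(f(X))$ by Proposition~\ref{pr:linearmap}, and every fibre-piece is by construction bounded by $|(A^{t_1}\times A^{t_2})\cap W|$, so the partition identity gives the result. I would close by noting that $f(X)$ being a linear affine space and the dimension formula for $W$ are both recorded already in Proposition~\ref{pr:linearmap}, so nothing further needs to be proved there.
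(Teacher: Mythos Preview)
Your proposal is correct and matches the paper's proof essentially line for line: partition $(A^{t_1}\times A^{t_2})\cap X$ into fibres of $f$, observe that contributing values lie in $A^k\cap f(X)$, and take $W$ to be the fibre with largest intersection, invoking Proposition~\ref{pr:linearmap} for the affine structure and dimension of $W$. The discussion of translates is an unnecessary detour --- the paper simply takes the maximum directly --- but your resolution via the maximizing fibre is exactly what the paper does.
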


\begin{proof}
By definition we have
\begin{equation*}
(A^{t_{1}}\times A^{t_{2}})\cap X=\bigcup_{x\in A^{k}\cap f(X)}(A^{t_{1}}\times A^{t_{2}})\cap f^{-1}(x)\cap X,
\end{equation*}
so that
\begin{align*}
|(A^{t_{1}}\times A^{t_{2}})\cap X| & =\sum_{x\in A^{k}\cap f(X)}|(A^{t_{1}}\times A^{t_{2}})\cap f^{-1}(x)\cap X| \\
 & \leq|A^{k}\cap f(X)|\cdot\max_{x\in A^{k}\cap f(X)}|(A^{t_{1}}\times A^{t_{2}})\cap f^{-1}(x)\cap X| \\
 & =|A^{k}\cap f(X)|\cdot|(A^{t_{1}}\times A^{t_{2}})\cap W|
\end{align*}
where $W$ is the fibre $f^{-1}(x)\cap X$ having largest intersection with $A^{t_{1}}\times A^{t_{2}}$. Since $X$ and $f$ are linear affine, the rest follows from Proposition~\ref{pr:linearmap}.
\end{proof}

The next result is the dimensional estimate we were aiming for: Theorem~\ref{th:main0} is just Theorem~\ref{th:dimest} with $t=1$.

\begin{theorem}\label{th:dimest}
Let $\mathfrak{g}$ be a simple algebra over a field $K$ of dimension $D<\infty$, and let $A\subseteq\mathfrak{g}$ be a symmetric set with $\mathrm{span}_{K}(\langle A\rangle)=\mathfrak{g}$. Let $V\subseteq\mathfrak{g}$ be a linear affine space.

Then
\begin{equation*}
|A^{t}\cap V|\leq|A^{k}|^{\frac{\dim(V)}{D}}
\end{equation*}
for any $t\geq 1$, with
\begin{enumerate}[(a)]
\item\label{th:dimestall} $k=tD+D(D-1)2^{D-2}$, and
\item\label{th:dimestturr} $k=tD+\frac{3}{2}D(D-1)$ if $\mathfrak{g}$ is a tower algebra.
\end{enumerate}
\end{theorem}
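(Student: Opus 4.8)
The plan is to prove the dimensional estimate by induction on $\dim(V)$, using the slicing machinery of Proposition~\ref{pr:imfibre} together with the span-growth results of \S\ref{se:esc} to guarantee that the fibre dimensions actually drop. The base case $\dim(V)=0$ is trivial: a $0$-dimensional linear affine space is a point, so $|A^t\cap V|\le 1=|A^k|^{0}$. For the inductive step, suppose $\dim(V)=e\ge 1$ and the estimate holds for all linear affine spaces of dimension $<e$ (with the appropriate $k$). Since $V$ is a proper-dimensional subspace of $\mathfrak g$ and $\mathfrak g$ is simple (so no nonzero proper subspace is an ideal), I want to produce a linear map $f$ that does not vanish identically on the linear span of $V$. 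Concretely, by Corollary~\ref{co:vspan} (or Corollary~\ref{co:vspanturr} in the turrifiable case), applied to a nonzero $v$ in the vector space $V_0$ underlying $V$, a bounded-length set of towers $\mathcal T_{\le D}(v,A^{[\le s]})$ already spans all of $\mathfrak g$; in particular there is a tower expression, built from $v$ and from at most $c$ elements of $A$ (with $c=D(D-1)2^{D-2}$ in general, $c=\frac32D(D-1)$ when turrifiable — these are exactly the ``overhead'' terms in the statement), whose associated linear map $f(\cdot)=[\cdots[\cdot,a_1],\ldots,a_r]$ maps $V_0$ onto a space of dimension $\ge 1$, hence $\dim(f(V))\ge 1$.

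Now apply Proposition~\ref{pr:imfibre}-style slicing: with $t_1=t$, and $f$ built from $\le c$ elements of $A$ so that $f(A^t)\subseteq A^{t+c}=A^k$, we get
\begin{equation*}
|A^t\cap V|\le |A^{k}\cap f(V)|\cdot|A^t\cap W|,
\end{equation*}
where $W$ is a fibre, a linear affine space with $\dim(W)=\dim(V)-\dim(f(V))\le e-1$. The factor $|A^t\cap W|$ is handled by the inductive hypothesis, giving $|A^t\cap W|\le|A^k|^{\dim(W)/D}$ — but one has to be slightly careful that the $k$ used on $W$ is no larger than the $k$ being claimed for $V$; since the overhead $c$ only needs to be spent once per dimension drop, the total is $tD+D\cdot(\text{per-step overhead})$, matching $k=tD+D(D-1)2^{D-2}$ (resp.\ $tD+\frac32D(D-1)$). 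To close the recursion cleanly, the cleanest bookkeeping is probably to prove the statement in the form ``$|A^t\cap V|\le|A^{tD+C\cdot\dim(V)}|^{\dim(V)/D}$'' by induction on $\dim(V)$, with $C$ the per-dimension cost, so that a single drop consumes exactly one unit of $C$.

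The remaining factor $|A^{k}\cap f(V)|$ must also be absorbed. Here the key is to iterate the descent all the way down: starting from $V$ of dimension $e$, one repeatedly slices to get a chain $V=V_e\supseteq$-in-image $V_{e-1}\supseteq\cdots\supseteq V_1\supseteq V_0$ of linear affine spaces of dropping dimension, with $|A^t\cap V_j|\le|A^{k_j}\cap V_{j-1}|\cdot|A^t\cap W_j|$ at each stage; multiplying through and using that $\dim(W_j)=\dim(V_j)-\dim(V_{j-1})$ telescopes, the product of the fibre contributions is at most $|A^k|^{(D-\text{(final dim)})/D}$-type terms while the head term $|A^k\cap V_0|$ with $\dim V_0$ small is bounded trivially. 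The cleanest route is in fact the standard one: prove by downward induction that $|A^t\cap V|\cdot|A^k\cap V^\perp\text{-analogue}|$ is controlled — but since we lack an inner product, instead just iterate Proposition~\ref{pr:imfibre} with a map $f$ sending $V$ onto a $1$-dimensional space and note $|A^t\cap V|\le|A^k\cap f(V)|\cdot|A^t\cap W|\le|A^k|^{1/D}\cdot|A^t\cap W|$ using the yet-to-be-established $1$-dimensional case $|A^k\cap L|\le|A^k|^{1/D}$ for lines $L$ — which is itself the $e=1$ instance, proved by choosing $f:\mathfrak g\to\mathfrak g$ of rank $D$...

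\medskip

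\textbf{Main obstacle.} The genuinely delicate point is the $1$-dimensional (or, dually, the full-rank) case: to bound $|A^k\cap L|$ for a line $L$ one needs a linear map $f$ built from few $A$-elements that is \emph{injective on $\mathfrak g$} (equivalently surjective, by rank-nullity), so that $|A^k\cap L|=|f(A^k\cap L)|\le|f(\mathfrak g)\cap A^{k'}|$ with $\dim f(\mathfrak g)=D$ — but that gives nothing. The right statement is the reverse: one uses that for a $1$-dimensional $L$, the product set $L+L+\cdots$ in $D$ independent directions, realized via brackets, has size $\gtrsim|A^k\cap L|^D$, so $|A^k\cap L|^D\le|A^{k'}|$; realizing $D$ linearly independent copies of $L$ inside $\mathfrak g$ using bounded-length bracket expressions is exactly what Corollary~\ref{co:vspan}/\ref{co:vspanturr} provides (apply it to a nonzero $v\in L$), and the number of $A$-elements consumed is the $D(D-1)2^{D-2}$ (resp.\ $\frac32 D(D-1)$) overhead. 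Making the two inductions — the one on $\dim V$ and the one building the $D$ independent directions — fit together with matching values of $k$, and verifying that the overhead is additive rather than multiplicative across dimension drops, is where the real care is needed; everything else is linear algebra (rank-nullity) and the already-proved escape lemmas.
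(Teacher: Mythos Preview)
Your descent strategy has a real gap. After picking a nonzero $v\in V_0$ and a tower map $f(\cdot)=[\cdots[\cdot,a_1],\ldots,a_r]$ with $f(v)\ne 0$, nothing prevents $f$ from being \emph{injective} on $V_0$: then $\dim f(V)=\dim V$, the fibre $W$ is a point, and the inequality $|A^t\cap V|\le|A^{t+c}\cap f(V)|$ merely replaces $V$ by another space of the same dimension with a larger exponent $t+c$. Even when $\dim f(V)<\dim V$, applying the inductive hypothesis to $f(V)$ at parameter $t+c$ forces $k\ge (t+c)D+\cdots>tD+\cdots$, so the recursion does not close with the stated value of $k$. The ``chain of dropping dimensions'' you describe is asserted, not established; and the identification of the single-step cost $c$ with the full overhead $D(D-1)2^{D-2}$ is also off (that quantity arises as a \emph{sum} of per-step costs, not the cost of one tower).

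The paper resolves this by going \emph{up} rather than down, and by working on a product. Fix $v\in V$ and use Corollary~\ref{co:vspan} (resp.~\ref{co:vspanturr}) not merely to produce a nonzero tower, but to pick a tower $w=w(v)$ lying \emph{outside} a given auxiliary space $V'$ of dimension $d'\ge d$; this is where ``spans all of $\mathfrak g$'' is actually used. Then slice $V\times V'$ via the map $f(x,y)=w(x)+y$ of Proposition~\ref{pr:imfibre}: since $w\notin V'$ one has $\dim f(V\times V')>d'$, while the fibre projects injectively to a linear affine space of dimension $d+d'-\dim f(V\times V')<d$, handled by the induction on $d$. Starting from $V'=V$ and iterating ascends to $\mathfrak g$ in at most $D-d$ steps; raising $|A^t\cap V|$ to the power $J+1$ before iterating makes the fibre exponents telescope to $((J{+}1)d-D)/D$, and the head term $|A^{t_J}\cap\mathfrak g|=|A^{t_J}|$ supplies the missing $1$. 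Your ``main obstacle'' paragraph is precisely this argument in the special case $d=1$; the missing idea is that the same product-and-ascend trick works for every $d$, with the escape corollary invoked to leave $V'$ rather than merely to produce a nonzero vector.
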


\begin{proof}
Set $\dim(V)=d$. We proceed by induction on $d$, proving that we can choose $k=k(t,D,d)$ as in the statement. If $d=0$ then $V$ is a single point and $|A^{t}\cap V|\leq 1$. If $d=D$ then $V=\mathfrak{g}$ and $|A^{t}\cap V|=|A^{t}|$.

Fix now $0<d<D$, and assume that the theorem holds for all linear affine spaces of dimension $<d$. First, we show how to bound $|A^{t_{1}}\cap V|\cdot|A^{t_{2}}\cap V'|$ where $V'$ is another linear affine space of dimension $d'$ with $d\leq d'<D$.

Take any nonzero element $v\in V$. By Corollary~\ref{co:vspan}, $\mathcal{T}_{\leq d'+1}(v,A^{[\leq 2^{D-1}]})$ spans a $K$-vector space of dimension larger than $d'$; therefore, there must be an element $w\in\mathcal{T}_{\leq d'+1}(v,A^{[\leq 2^{D-1}]})$ such that $\dim(Kw+V')>\dim(V')$. If $\mathfrak{g}$ is also a tower algebra, we can do the same with $w\in\mathcal{T}_{\leq d'+D+1}(v,A)$ by Corollary~\ref{co:vspanturr}.

Since $w$ is written naturally as a tower with one entry equal to $v$, we can define the function $w(x)$ using the same expression but replacing $v$ with $x$: by the bilinearity of $[\cdot,\cdot]$ and the definition of tower, $w(x)$ is a linear homogeneous map. Define also $f:\mathfrak{g}\times\mathfrak{g}\rightarrow\mathfrak{g}$ as $f(x,y)=w(x)+y$: $f$ is again linear homogeneous, and $f(A^{t_{1}}\times A^{t_{2}})\subseteq A^{t_{1}+t_{2}+g(D,d')}$ with $g(D,d')=d'2^{D-1}$, or $g(D,d')=d'+D$ if $\mathfrak{g}$ is a tower algebra.

The product $V\times V'$ is a linear affine space, so we can use Proposition~\ref{pr:imfibre} and obtain
\begin{equation*}
|A^{t_{1}}\cap V|\cdot|A^{t_{2}}\cap V'|\leq|A^{t_{1}+t_{2}+g(D,d')}\cap f(V\times V')|\cdot|(A^{t_{1}}\times A^{t_{2}})\cap W|
\end{equation*}
for $W$ a fibre in $V\times V'$ through $f$. Both $f(V\times V')$ and $W$ are linear affine, and $\dim(f(V\times V'))>d'$ by the choice of $w(x)$; hence, $\dim(W)=d+d'-\dim(f(V\times V'))<d$. Note now that $x$ and $f(x,y)$ determine $y$ uniquely, so given $x\in\mathfrak{g}$ there is at most one $y\in\mathfrak{g}$ for which $(x,y)\in W$. In other words, considering the projection $\pi:\mathfrak{g}\times\mathfrak{g}\rightarrow\mathfrak{g}$ to the first component, we get
\begin{equation*}
|(A^{t_{1}}\times A^{t_{2}})\cap W|\leq|A^{t_{1}}\cap\pi(W)|
\end{equation*}
with $\pi(W)$ again linear affine and $\dim(\pi(W))=\dim(W)$. Putting everything together, renaming $f(V\times V')=V''$ and using the inductive hypothesis on $\pi(W)$, we conclude that
\begin{equation}\label{eq:bigstep}
|A^{t_{1}}\cap V|\cdot|A^{t_{2}}\cap V'|\leq|A^{t_{1}+t_{2}+g(D,d')}\cap V''|\cdot|A^{k(t_{1},D,d_{1})}|^{\frac{d_{1}}{D}},
\end{equation}
where $V''$ is linear affine and where $d_{1}=d+d'-\dim(V'')<d$.

Next we bound $|A^{t}\cap V|$ itself. Start with a quantity $|A^{t}\cap V|^{J+1}$, for some integer $J\geq 0$. We apply \eqref{eq:bigstep} repeatedly to the factors of this quantity, in the following way: setting initially $V_{0}=V$ and $t_{0}=t$, at the $j$-th step we get
\begin{equation}\label{eq:dimjstep}
|A^{t}\cap V|\cdot|A^{t_{j}}\cap V_{j}|\leq|A^{t_{j+1}}\cap V_{j+1}|\cdot|A^{k(t,D,d_{j})}|^{\frac{d_{j}}{D}},
\end{equation}
with $t_{j+1}=t+t_{j}+g(D,\dim(V_{j}))$ and $\dim(V_{j+1})>\dim(V_{j})$, and where $d_{j}=d+\dim(V_{j})-\dim(V_{j+1})$. Since the $V_{j}$ have larger and larger dimensions, the process stops when we cover $\mathfrak{g}$: we assume to have chosen $J$ such that $V_{J}=\mathfrak{g}$, which happens for some $J\leq D-d$. Therefore, putting together \eqref{eq:dimjstep} for all $j\leq J$,
\begin{equation*}
|A^{t}\cap V|^{J+1}\leq|A^{t_{J}}\cap V_{J}|\cdot\prod_{j=0}^{J-1}|A^{k(t,D,d_{j})}|^{\frac{d_{j}}{D}}\leq|A^{t_{J}}|\cdot|A^{k(t,D,d-1)}|^{\frac{\Delta}{D}},
\end{equation*}
where $\Delta=\sum_{j=0}^{J-1}d_{j}=(J+1)d-D$. Hence
\begin{equation*}
|A^{t}\cap V|\leq|A^{k(t,D,d)}|^{\frac{d}{D}}
\end{equation*}
for $k(t,D,d)=\max\{t_{J},k(t,D,d-1)\}$.

It remains to estimate $k$. Since $J\leq D-1$ we have $t_{J}\leq tD+D(D-1)2^{D-2}$ regardless of $d$, and using the inductive hypothesis we obtain
\begin{equation*}
k(t,D,d)=\max\{t_{J},k(t,D,d-1)\}\leq tD+D(D-1)2^{D-2}.
\end{equation*}
If $\mathfrak{g}$ is a tower algebra we get $t_{J}\leq tD+\frac{3}{2}D(D-1)$, and the same for $k(t,D,d)$.
\end{proof}

\section{Generic elements of Lie algebras}\label{se:generic}

From now on, we focus on $\mathfrak{g}$ a finite-dimensional simple Lie algebra. Thanks to Proposition~\ref{pr:liemalcev}, all the results for tower algebras contained in the previous sections apply to $\mathfrak{g}$.

For us, a \textit{classical Lie algebra} is a simple algebra $\mathfrak{g}$ of the form
\begin{align*}
& \mathfrak{sl}_{n}(K) \ \ (n\geq 2), & & \mathfrak{so}_{2n+1}(K) \ \ (n\geq 3), \\ & \mathfrak{sp}_{2n}(K) \ \ (n\geq 2), & & \mathfrak{so}_{2n}(K) \ \ (n\geq 4), \\ & \mathfrak{e}_{6}(K), \ \mathfrak{e}_{7}(K), \ \mathfrak{e}_{8}(K), \ \mathfrak{f}_{4}(K), \ \mathfrak{g}_{2}(K)
\end{align*}
over a certain field $K$. By adopting this name, we follow the usual convention of the literature in the positive characteristic case, which reserves the term ``non-classical'' for algebras not appearing in the complex case (Cartan, Melikian, and others): see for instance \cite[pp.~2-3]{Str17} and \cite[\S 3.1]{PS06}. On the contrary, in the case $K=\mathbb{C}$ the term ``classical'' conventionally covers only the four infinite families $\mathfrak{sl}_{n},\mathfrak{so}_{2n+1},\mathfrak{sp}_{2n},\mathfrak{so}_{2n}$, while $\mathfrak{e}_{6},\mathfrak{e}_{7},\mathfrak{e}_{8},\mathfrak{f}_{4},\mathfrak{g}_{2}$ are ``exceptional'': see for instance \cite[\S 9.4]{FH04}.

The assumptions on $n$ ensure that all the Lie algebras listed above are simple and distinct for $K=\mathbb{C}$ (see \cite[p.~66]{EK13}): for smaller $n$, we have the isomorphisms $\mathfrak{so}_{3}(\mathbb{C})\simeq\mathfrak{sp}_{2}(\mathbb{C})\simeq\mathfrak{sl}_{2}(\mathbb{C})$, $\mathfrak{so}_{5}(\mathbb{C})\simeq\mathfrak{sp}_{4}(\mathbb{C})$, and $\mathfrak{so}_{6}(\mathbb{C})\simeq\mathfrak{sl}_{4}(\mathbb{C})$, while $\mathfrak{so}_{4}(\mathbb{C})\simeq\mathfrak{sl}_{2}(\mathbb{C})\oplus\mathfrak{sl}_{2}(\mathbb{C})$ is not simple (see \cite[\S\S 18.2-19.1]{FH04}). For an arbitrary field $K$, provided that $\mathrm{char}(K)\notin\{2,3\}$, the algebras above are simple when their centre is trivial: this fails to happen only for $\mathfrak{sl}_{n}(K)$ when $\mathrm{char}(K)|n$ (see \cite[pp.~69-71]{EK13} or \cite[\S 3.1]{PS06}).

All the classical Lie algebras are of dimension $>1$ and the Lie bracket is anti-commutative, so $[x,\cdot]$ is not the zero map for any $x\neq 0$ (or else we would violate simplicity).

\subsection{Extremal elements}\label{se:genericextr}

Let $\mathfrak{g}$ be a Lie algebra over a field $K$. An element $x\in\mathfrak{g}$ is \textit{extremal} when $[[\mathfrak{g},x],x]\subseteq Kx$. For every $x\in\mathfrak{g}$ extremal and every $y\in\mathfrak{g}$, we write $\lambda_{x}(y)$ for the value in $K$ such that $[[y,x],x]=\lambda_{x}(y)x$. If $\lambda_{x}(y)=0$ for all $y\in\mathfrak{g}$, $x$ is a \textit{sandwich}.

Extremal and sandwich elements have been extensively studied, especially for their applications in the theory of classification of simple Lie algebras in positive characteristic. For instance, if $K$ is algebraically closed with $\mathrm{char}(K)>5$, classical Lie algebras are the only simple Lie algebras over $K$ without sandwich elements, as proved by Premet \cite{Pre87}.

The following appears without proof in \cite[(2)]{Che89} and \cite[Lemma~2.2]{CSUW01}.

\begin{lemma}\label{le:extremalmani}
Let $\mathfrak{g}$ be a Lie algebra over a field $K$, and let $z\in\mathfrak{g}$ be extremal. Then for every $x,y\in\mathfrak{g}$ we have
\begin{equation*}
2[[x,z],[y,z]]=\lambda_{z}(x)[y,z]-\lambda_{z}(y)[x,z]+\lambda_{z}([x,y])z.
\end{equation*}
\end{lemma}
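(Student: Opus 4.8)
```latex
The plan is to derive the identity by linearizing the defining property of an extremal element. Recall that $z$ extremal means $[[w,z],z]=\lambda_z(w)z$ for all $w$, and $w\mapsto\lambda_z(w)$ is a linear functional (linearity follows by bilinearity of $[\cdot,\cdot]$ from uniqueness of the coefficient, since $z\neq 0$). The natural move is to substitute $w=x+ty$ for a parameter $t\in K$, expand both sides, and compare. The left side becomes $[[x+ty,z],z]=[[x,z],z]+t[[y,z],z]$, which is already ``linear'' in $t$ because there is only one slot being filled twice by $z$ — so this route alone gives nothing new. The correct linearization must instead occur in the \emph{two separate} $z$-slots: one should polarize the quadratic-in-$z$ expression $[[w,z],z]$ by replacing the two copies of $z$ asymmetrically.

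Concretely, I would start from the extremality of a generic element of the plane $Kz+K(\text{something})$, or more directly apply the standard trick: extremality of $z$ gives $[[w,z],z]=\lambda_z(w)z$; now compute $[[w,z+u],z+u]$ for arbitrary $u$ and expand, but this reintroduces $\lambda$ of a non-extremal element. The cleaner path, and the one I expect the paper to use, is to take the known identity $[[y,z],z]=\lambda_z(y)z$, apply $\mathrm{ad}_z$-type manipulations together with the Jacobi identity. Specifically: expand $[[[x,y],z],z]=\lambda_z([x,y])z$ using Jacobi on $[[x,y],z]=[[x,z],y]+[x,[y,z]]=[[x,z],y]-[[y,z],x]$, then apply $[\cdot,z]$ again and use Jacobi once more on each term, collecting the pieces $[[[x,z],y],z]$, $[[[y,z],x],z]$. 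Each of these, by Jacobi, produces a term $[[x,z],[y,z]]$ (or $[[y,z],[x,z]]$, equal up to sign by anti-commutativity) plus terms of the form $[[[x,z],z],y]=\lambda_z(x)[y,z]$ which are handled by extremality of $z$. Carefully bookkeeping the coefficients and using anti-commutativity $[[x,z],[y,z]]=-[[y,z],[x,z]]$ to merge the two $[[x,z],[y,z]]$-type contributions should yield exactly $2[[x,z],[y,z]]=\lambda_z(x)[y,z]-\lambda_z(y)[x,z]+\lambda_z([x,y])z$.

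The one subtlety I would watch is the factor of $2$: it arises because the double application of Jacobi produces the mixed term $[[x,z],[y,z]]$ twice (once from each order of peeling off the brackets), and this is precisely why the statement is written with $2[[x,z],[y,z]]$ on the left rather than dividing — the identity holds in all characteristics, including characteristic $2$, and one must not divide by $2$. So the argument should be arranged purely additively: never divide, just collect. I would also double-check the sign conventions on each Jacobi application, since anti-commutativity flips signs and an error there would corrupt the right-hand side.

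The main obstacle is therefore not conceptual but combinatorial: organizing the iterated Jacobi expansions so that all terms reduce to one of the three target monomials ($[y,z]$, $[x,z]$, $z$ with scalar coefficients) plus the mixed double-bracket term, without losing or miscounting any contribution. There are no deep ideas — it is a finite identity in a free Lie algebra modulo the relation that $z$ is extremal — so in principle one could even verify it by working in the free Lie algebra on $\{x,y,z\}$, imposing $[[w,z],z]\equiv\lambda_z(w)z$, and checking degree $4$ terms. I expect the paper's proof to be a short explicit chain of Jacobi substitutions of exactly this kind.
```
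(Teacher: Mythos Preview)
Your proposal is correct and matches the paper's approach essentially exactly: the paper's proof is precisely the short chain of Jacobi substitutions you anticipate, expanding $2[[x,z],[y,z]]$ into $[[[y,z],z],x]-[[[y,z],x],z]-[[[x,z],z],y]+[[[x,z],y],z]$, combining the two middle terms via Jacobi into $[[[x,y],z],z]$, and then applying extremality to each of the three remaining pieces. The only cosmetic difference is orientation --- the paper starts from $2[[x,z],[y,z]]$ and works outward, whereas you propose starting from $\lambda_z([x,y])z=[[[x,y],z],z]$ and working inward --- but the computation is the same.
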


\begin{proof}
By the properties of the Lie bracket, it is straightforward to obtain
\begin{align*}
2[[x,z],[y,z]] & =[[[y,z],z],x]-[[[y,z],x],z]-[[[x,z],z],y]+[[[x,z],y],z] \\
 & =[[[y,z],z],x]+[[[x,y],z],z]-[[[x,z],z],y]
\end{align*}
and the rest follows by definition of extremal.
\end{proof}

Extremal elements are useful also because they span the Lie algebras we are concerned about. As claimed in the next result, whose proof we partly delay to Appendix~\ref{se:appextr}, there is a basis of $\mathfrak{g}$ as a $K$-vector space made of extremal elements (an \textit{extremal basis}) satisfying also some additional properties.

\begin{theorem}\label{th:extremalbasis}
Let $\mathfrak{g}$ be a classical Lie algebra over a field $K$ with $\mathrm{char}(K)\neq 2$. Then there is an extremal basis $B$ of $\mathfrak{g}$ satisfying the following properties:
\begin{enumerate}[(a)]
\item\label{th:extremalbasisnosand} for every $b\in B$ there is some $z\in\mathfrak{g}$ with $\lambda_{b}(z)\neq 0$ (in other words, $b$ is not a sandwich);
\item\label{th:extremalbasisnofour} there is an element $b_{1}\in B$ such that for every $b\in B\setminus\{b_{1}\}$ there is some $z\in\mathfrak{g}$ with $[[z,b_{1}],[z,b]]\neq 0$.
\end{enumerate}
\end{theorem}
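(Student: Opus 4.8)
The statement claims the existence of an extremal basis $B$ with two "non-degeneracy" features: no element of $B$ is a sandwich, and one fixed element $b_1$ fails to degenerate against every other basis element via the quadratic map $z\mapsto[[z,b_1],[z,b]]$. My approach would be to construct $B$ very concretely from a Chevalley basis of $\mathfrak{g}$, and then verify the two properties by explicit root-theoretic computation, deferring the case-by-case drudgery to the appendix (as the author announces).

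\textbf{Step 1: the root-space skeleton.} First I would recall that over a field $K$ with $\mathrm{char}(K)\neq 2$ (and, for the algebras in question, with the characteristic restrictions that guarantee simplicity — e.g.\ $\mathrm{char}(K)\nmid n$ for $\mathfrak{sl}_n$), the classical Lie algebra $\mathfrak{g}$ is a Chevalley algebra: it has a basis $\{e_\alpha : \alpha\in\Phi\}\cup\{h_i : 1\le i\le r\}$ where $\Phi$ is the root system and $r$ the rank. The root vectors $e_\alpha$ are the obvious candidates for extremal elements: since $2\alpha$ is never a root, $[[x,e_\alpha],e_\alpha]$ always lands in $K e_\alpha$ for every $x$, so $e_\alpha$ is extremal with $\lambda_{e_\alpha}$ the appropriate quadratic form coming from the structure constants. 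The Cartan part, however, consists of non-extremal elements, so the main constructive task is to replace the $h_i$ by extremal elements. For $\mathfrak{sl}_n$ the standard trick is that $E_{ij}$ for $i\neq j$ are all extremal (nilpotent rank-one matrices $v w^{\mathsf T}$ with $w^{\mathsf T}v=0$ are extremal in any matrix algebra), and $\{E_{ij}\}_{i\neq j}$ already spans a complement to nothing — but one still needs $n-1$ diagonal directions, which one gets from differences $E_{12}-E_{21}$-type combinations or, more uniformly, from the observation that sums $e_\alpha+e_{-\alpha}$ need not be extremal but $e_\alpha - c\, e_{-\alpha}$ can be arranged (in the $\mathfrak{sl}_2$-triple attached to $\alpha$) to be a nilpotent, hence extremal, element spanning a new direction in the Cartan subalgebra after bracketing. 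I would package this as: for each simple root $\alpha_i$, pick an extremal element $b_i'$ inside the $\mathfrak{sl}_2$-copy $\langle e_{\alpha_i},h_{\alpha_i},e_{-\alpha_i}\rangle$ whose span together with all $e_\alpha$ ($\alpha\in\Phi$) recovers $h_{\alpha_i}$; concretely any regular nilpotent of that $\mathfrak{sl}_2$ works. This produces the extremal basis $B=\{e_\alpha\}_{\alpha\in\Phi}\cup\{b_i'\}_{1\le i\le r}$. (For the exceptional algebras one argues identically, working abstractly with $\Phi$ of type $E_6,E_7,E_8,F_4,G_2$; the precise verification is exactly the "one technical detail postponed to Appendix~\ref{se:appextr}".)

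\textbf{Step 2: property (a), no sandwiches.} For a root vector $e_\alpha$, one needs a $z$ with $[[z,e_\alpha],e_\alpha]\neq 0$: take $z=e_{-\alpha}$. Then $[z,e_\alpha]$ is a nonzero multiple of $h_\alpha$ and $[[z,e_\alpha],e_\alpha]$ is a nonzero multiple of $\langle\alpha,\alpha^\vee\rangle e_\alpha = 2 e_\alpha$, which is nonzero because $\mathrm{char}(K)\neq 2$. (This is precisely the content of Lemma~\ref{le:extremalmani} specialised appropriately, and the place where the hypothesis $\mathrm{char}(K)\neq 2$ is used.) For the Cartan-direction elements $b_i'$, since they are regular nilpotents of an $\mathfrak{sl}_2$-triple, the same $\mathfrak{sl}_2$-computation shows $\lambda_{b_i'}$ does not vanish identically — again using $2\neq 0$. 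Hence no $b\in B$ is a sandwich.

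\textbf{Step 3: property (b), the quadratic non-degeneracy.} I would take $b_1 = e_\beta$ for a well-chosen root $\beta$ — a highest root, say — and show that for each other $b\in B$ there is a $z$ (in fact a single root vector, or a short sum of two) with $[[z,b_1],[z,b]]\neq 0$. Using bilinearity, $g(z)=[[z,e_\beta],[z,b]]$, and it suffices to exhibit one test direction. If $b=e_\gamma$ is another root vector, pick $z$ so that $[z,e_\beta]$ and $[z,e_\gamma]$ are root vectors $e_{\mu},e_{\nu}$ with $\mu+\nu\in\Phi\cup\{0\}$ and the relevant structure constant (or Cartan pairing) nonzero; since $\beta$ is highest and $\gamma$ is arbitrary, one can always find such a $z$ among the $e_{-\delta}$. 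The characteristic hypothesis enters only to keep a handful of structure constants / Cartan integers from vanishing; this is where the residual "$\mathrm{char}(K)$ not too small" flavour of the main theorems comes from, but for this lemma only $\mathrm{char}(K)\neq 2$ is claimed, so the $\beta$ must be chosen to avoid all characteristic-$3$ type cancellations (this is genuinely a case check, which is why the appendix is invoked). If $b=b_i'$, the same idea applies using that $b_i'$ has a nonzero $e_{\pm\alpha_i}$-component.

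\textbf{Main obstacle.} The conceptual skeleton — extremality of root vectors, $\mathfrak{sl}_2$-reduction, the single use of $2\neq 0$ — is routine. The genuine difficulty, and the reason the author offloads part of it to Appendix~\ref{se:appextr}, is the \emph{uniform} verification of property (b) across all nine families simultaneously, including making sure the chosen $b_1$ works against \emph{every} other basis element with the mild hypothesis $\mathrm{char}(K)\neq 2$ rather than a larger bound. One has to choose $b_1$ and the test elements $z$ carefully enough that no structure constant or Cartan integer that must be nonzero happens to be divisible by the characteristic; in the exceptional cases this requires either an explicit table of structure constants or a clever root-combinatorial argument. That is the step I expect to be hardest and most computational, and it is exactly the one the paper postpones.
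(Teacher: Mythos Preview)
There is a genuine gap in Step~1. Your claim that every root vector $e_\alpha$ is extremal because ``$2\alpha$ is never a root'' is a non sequitur: what you actually need is that $\gamma+2\alpha$ is never a root whenever $\gamma+\alpha$ is, i.e.\ that every $\alpha$-string has length at most~$2$. This holds precisely when $\alpha$ is a \emph{long} root. For short roots in types $B_n$, $C_n$, $F_4$, $G_2$ it fails---for instance in $\mathfrak{sp}_{2n}$ the short-root vector $e_{e_1-e_2}$ satisfies $[[e_{-2e_1},e_{e_1-e_2}],e_{e_1-e_2}]\in\mathfrak{g}_{-2e_2}\neq K e_{e_1-e_2}$. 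Long root vectors alone, even together with your Cartan replacements $b_i'$, do not span $\mathfrak{g}$ in these types (in $\mathfrak{sp}_{2n}$ they give only $3n$ directions out of $2n^2+n$), so the proposed extremal basis does not exist as described. Your Steps~2 and~3 inherit this: the ``take $z=e_{-\alpha}$'' argument for~(a) is fine for long roots but says nothing about the elements you would actually need to fill in the missing directions.

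The paper's proof takes a different route. Part~(a) is not computed at all: existence of an extremal basis and absence of sandwiches are quoted from structural results in \cite{CSUW01} (extremal elements generate $\mathfrak{g}$, hence span it; finite extremal generation forces the sandwich ideal to vanish). Part~(b) is done in the appendix by building, for each of the nine families separately, an explicit extremal basis out of concrete matrices (for $\mathfrak{sl}_n,\mathfrak{so}_n,\mathfrak{sp}_{2n}$ these are rank-one or carefully chosen low-rank matrices, not root vectors in general) and then exhibiting a single test element $z_0$ that works for all $b\neq b_1$ simultaneously; the exceptional cases are handled by computer verification. If you want to salvage a Chevalley-basis approach you would need to replace the short-root directions by genuinely new extremal elements (e.g.\ via Lemma~\ref{le:extrbra}), at which point the argument becomes the same case analysis the paper carries out.
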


\begin{proof}
By \cite[Prop.~3.3]{CSUW01}, $\mathfrak{g}$ is generated by the set of extremal elements. Then, by \cite[Lemma~2.4]{CSUW01}, $\mathfrak{g}$ is also spanned by the extremal elements. Thus, there exists an extremal basis $B$.

The list of minimal numbers of extremal elements required to generate $\mathfrak{g}$, contained in \cite[Thm.~8.2]{CSUW01}, shows in particular that $\mathfrak{g}$ is finitely generated by them. Thus, by \cite[Cor.~4.5]{CSUW01} there are no sandwich elements in $\mathfrak{g}$, and \eqref{th:extremalbasisnosand} is proved. For \eqref{th:extremalbasisnofour}, see Appendix~\ref{se:appextr}.
\end{proof}

We define a set of \textit{generic} elements, depending on the choice of an extremal basis $B$ of $\mathfrak{g}$ and of an element $b_{1}\in B$:
\begin{equation*}
U=U(B,b_{1}):=\left\{x\in\mathfrak{g}\ \left|\ \begin{array}{ll}\lambda_{b}(x)\neq 0 & \text{for all $b\in B$,} \\ \text{$[[x,b_{1}],[x,b]]\neq 0$} & \text{for all $b\in B\setminus\{b_{1}\}$}\end{array}\right.\right\}.
\end{equation*}
Generic elements in $U$ have the following property, which we shall need for our descent procedure in \S\ref{se:descent}.

\begin{proposition}\label{pr:generic}
Let $\mathfrak{g}$ be a classical Lie algebra over a field $K$ with $\mathrm{char}(K)\neq 2$, and fix an extremal basis $B$ of $\mathfrak{g}$ and an element $b_{1}\in B$. Let $x,y\in\mathfrak{g}$ be two nonzero elements such that $[x,y]=0$ and $[[x,z],[y,z]]=0$ for all $z\in\mathfrak{g}$. Then either $x\notin U$ or $y\in Kx$.
\end{proposition}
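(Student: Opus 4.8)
The plan is to fix a generic $x \in U$ and deduce from the two hypotheses $[x,y]=0$ and $[[x,z],[y,z]]=0$ (for all $z$) that $y \in Kx$. Since $x \in U$ we know $x$ is extremal, $\lambda_b(x) \neq 0$ for every basis element $b \in B$, and $[[x,b_1],[x,b]] \neq 0$ for every $b \in B \setminus \{b_1\}$. The first idea is to exploit the identity of Lemma~\ref{le:extremalmani} with $z := x$: since $x$ is extremal, for every $a, c \in \mathfrak{g}$ we have
\begin{equation*}
2[[a,x],[c,x]] = \lambda_x(a)[c,x] - \lambda_x(c)[a,x] + \lambda_x([a,c])x.
\end{equation*}

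First I would plug $a := x$ into this identity (or rather use it with $a$ and $c$ chosen so that the bracket $[\cdot,x]$ picks out $y$): the key observation is that the hypothesis $[[x,z],[y,z]]=0$ for all $z$ is not yet in the right form, so I would first symmetrize. Using bilinearity on $z \mapsto z' + z''$ we get $[[x,z'],[y,z'']] + [[x,z''],[y,z']] = 0$ for all $z',z'' \in \mathfrak{g}$ — this is exactly the polarization step already carried out in Lemma~\ref{le:quadresc}\eqref{le:quadresctot}. Now substitute $z'' := x$: since $x$ is a Lie element, $[y,x] = -[x,y] = 0$ by the first hypothesis, so the term $[[x,z'],[y,x]]$ vanishes and we are left with $[[x,x],[y,z']] = 0$, which is automatically $0$ — so this particular substitution is vacuous. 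Instead I would substitute $z' := x$ into Lemma~\ref{le:extremalmani} cleverly: take $a, c$ with $[a,x]$ and $[c,x]$ spanning enough of $\mathfrak{g}$. The cleaner route: apply Lemma~\ref{le:extremalmani} with the roles set so that $[a,x] = y$ is achievable only if $y \in [\mathfrak{g},x]$, which need not hold; so instead I would work directly with the map $\mathrm{ad}_x$ and its properties as an extremal element, using that $\mathrm{ad}_x^3 = 0$ up to the $\lambda_x$ correction.

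The cleanest approach, I believe, is this. Set $z := x$ in the polarized identity $[[x,z'],[y,z'']] + [[x,z''],[y,z']] = 0$ by choosing $z'$ arbitrary and $z'' := y$; this gives $[[x,z'],[y,y]] + [[x,y],[y,z']] = 0$, and since $[y,y]=0$ and $[x,y]=0$ both terms vanish — again vacuous. So the hypothesis $[[x,z],[y,z]]=0$ must be combined with the extremality of $x$ through a third element. I would instead directly apply Lemma~\ref{le:extremalmani} with $x$ as the extremal element and with $[x,\cdot]$ chosen to reach $y$: since $x \in U$ and in particular $x$ is not a sandwich, pick $w$ with $\lambda_x(w) \neq 0$; writing out $2[[w,x],[y,x]] = \lambda_x(w)[y,x] - \lambda_x(y)[w,x] + \lambda_x([w,y])x$ and using $[y,x]=0$ gives $2[[w,x],[y,x]] = -\lambda_x(y)[w,x] + \lambda_x([w,y])x$; but the left side is $[[w,x],0]=0$, so $\lambda_x(y)[w,x] = \lambda_x([w,y])x$ for every $w \in \mathfrak{g}$. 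Now if $\lambda_x(y) \neq 0$, then $[w,x] \in Kx$ for all $w$, i.e. $[[\mathfrak{g},x] \subseteq Kx$ combined with $[[\mathfrak{g},x],x] \subseteq Kx$ would force the two-dimensional space $\mathrm{span}(x, \text{anything}) $ to be an ideal-like object, contradicting simplicity (via Corollary~\ref{co:vspan} or the basic fact that $[\mathfrak{g},x]\subseteq Kx$ with $x\neq 0$ makes $Kx$ nearly invariant). Hence $\lambda_x(y) = 0$, which then forces $\lambda_x([w,y]) = 0$ for all $w$.

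From $\lambda_x(y) = 0$ — meaning $[[y,x],x] = 0$, consistent with $[y,x]=0$ — and $\lambda_x([w,y]) = 0$ for all $w$, the second bracket condition in the definition of $U$ comes into play: since $[x,b_1]$ and the various $[x,b]$ are nonzero and moreover $[[x,b_1],[x,b]] \neq 0$ for $b \neq b_1$, I would expand $y$ in the extremal basis and use $\lambda_b(x) \neq 0$ together with Lemma~\ref{le:extremalmani} applied at each $b \in B$ to pin down the coordinates of $y$. Concretely, writing $[x,y]=0$ means $y \in \ker(\mathrm{ad}_x)$; I would show that under the genericity conditions $\ker(\mathrm{ad}_x) \cap \{y : [[x,z],[y,z]]=0 \ \forall z\} = Kx$. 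The main obstacle is precisely this last step: translating the nonvanishing conditions $\lambda_b(x) \neq 0$ and $[[x,b_1],[x,b]] \neq 0$ into a statement that the simultaneous kernel is one-dimensional. I expect to handle it by contradiction: if $y \notin Kx$, pick $z$ using Theorem~\ref{th:extremalbasis}\eqref{th:extremalbasisnofour} (the element $b_1$ and the companion $b$ detecting a component of $y$) to produce $[[x,z],[y,z]] \neq 0$, directly contradicting the hypothesis — this is where all the work building the good basis $B$ and the set $U$ pays off, and where Lemma~\ref{le:quadresc}\eqref{le:quadresctot} lets me reduce checking $g(z) = [[z,x],[z,y]]$ to finitely many basis vectors and their pairwise sums.
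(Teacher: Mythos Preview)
Your proposal rests on a false premise: membership in $U$ does \emph{not} imply that $x$ is extremal. Look again at the definition of $U(B,b_{1})$: the conditions are $\lambda_{b}(x)\neq 0$ for each $b\in B$ (i.e.\ $[[x,b],b]$ is a nonzero multiple of $b$, using that $b$ is extremal) and $[[x,b_{1}],[x,b]]\neq 0$. Neither says anything about $[[\mathfrak{g},x],x]\subseteq Kx$. In fact, a generic element of $\mathfrak{sl}_{n}$ is not extremal (extremal elements there are essentially rank-one matrices), so your whole line of attack through Lemma~\ref{le:extremalmani} with $z:=x$ collapses: you cannot speak of $\lambda_{x}(\cdot)$ at all. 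Every computation you carry out after ``The first idea is to exploit the identity of Lemma~\ref{le:extremalmani} with $z:=x$'' is therefore unjustified, and the argument never recovers --- even your final paragraph, which gestures at applying Lemma~\ref{le:extremalmani} ``at each $b\in B$'', stops well short of an actual computation.

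The paper's proof does exactly the move you failed to make concrete: apply Lemma~\ref{le:extremalmani} with the extremal element taken to be each $b\in B$, not $x$. With $z:=b$ and the hypotheses $[x,y]=0$ and $[[x,b],[y,b]]=0$, the lemma gives $\lambda_{b}(x)[y,b]=\lambda_{b}(y)[x,b]$; since $\lambda_{b}(x)\neq 0$ by the first $U$-condition, this yields $[y,b]=\frac{\lambda_{b}(y)}{\lambda_{b}(x)}[x,b]$ for every $b\in B$. Your polarization step (substituting $z=b_{1}+b$) then gives $[[x,b_{1}],[y,b]]+[[x,b],[y,b_{1}]]=0$, and plugging in the proportionality just obtained produces $\bigl(\tfrac{\lambda_{b}(y)}{\lambda_{b}(x)}-\tfrac{\lambda_{b_{1}}(y)}{\lambda_{b_{1}}(x)}\bigr)[[x,b_{1}],[x,b]]=0$. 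Now the second $U$-condition forces the ratio $\lambda:=\lambda_{b}(y)/\lambda_{b}(x)$ to be independent of $b$, whence $[y-\lambda x,b]=0$ for all $b$ in a basis, so $[y-\lambda x,\mathfrak{g}]=0$ and simplicity finishes. The whole argument is four lines once you apply the lemma at the right element.
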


\begin{proof}
Assume that $x\in U$. By the hypotheses and Lemma~\ref{le:extremalmani}, for any $b\in B$ we must have
\begin{equation*}
\lambda_{b}(x)[y,b]-\lambda_{b}(y)[x,b]=2[[x,b],[y,b]]=0,
\end{equation*}
and since $\lambda_{b}(x)\neq 0$ we obtain
\begin{equation}\label{eq:lambdaprop}
[y,b]=\frac{\lambda_{b}(y)}{\lambda_{b}(x)}[x,b].
\end{equation}

Using again the hypotheses, we must also have
\begin{align*}
0 & =[[x,b_{1}+b],[y,b_{1}+b]]-[[x,b_{1}],[y,b_{1}]]-[[x,b],[y,b]] \\
 & =[[x,b_{1}],[y,b]]+[[x,b],[y,b_{1}]],
\end{align*}
which together with \eqref{eq:lambdaprop} yields
\begin{equation*}
\left(\frac{\lambda_{b}(y)}{\lambda_{b}(x)}-\frac{\lambda_{b_{1}}(y)}{\lambda_{b_{1}}(x)}\right)[[x,b_{1}],[x,b]]=0.
\end{equation*}
Since $[[x,b_{1}],[x,b]]\neq 0$, this means that $\frac{\lambda_{b}(y)}{\lambda_{b}(x)}=\frac{\lambda_{b_{1}}(y)}{\lambda_{b_{1}}(x)}=:\lambda$ for all $b\in B$. Therefore, since $B$ spans the whole $\mathfrak{g}$, \eqref{eq:lambdaprop} implies that $[y,z]=\lambda[x,z]$ for all $z\in\mathfrak{g}$, or in other words that $[y-\lambda x,\mathfrak{g}]=\{0\}$. Since $[y-\lambda x,\cdot]$ could not be the zero map if we had $y-\lambda x\neq 0$, we conclude that $y=\lambda x$.
\end{proof}

\subsection{Finding a good map}\label{se:genericgoodmap}

The objective of the whole \S\ref{se:generic} is to be able to separate two general non-collinear elements $x,y$ via a linear homogeneous map $f$ so that $[f(x),f(y)]\neq 0$. As we shall see in \S\ref{se:descent}, this is the key step to achieve the dimensional descent we want.

We collected several facts in \S\ref{se:genericextr}, and now we put them together. Theorem~\ref{th:extremalbasis} opens the door to having generic elements, an escape procedure as in \S\ref{se:esc} makes us reach one such element, and Proposition~\ref{pr:generic} shows that we attain a desirable property. Corollary~\ref{co:generic} below sums up these steps.

\begin{corollary}\label{co:generic}
Let $\mathfrak{g}$ be a classical Lie algebra over a field $K$ with $\mathrm{char}(K)=0$ or $\mathrm{char}(K)\geq 3\dim(\mathfrak{g})$, and let $A\subseteq\mathfrak{g}$ be a symmetric set with $\mathrm{span}_{K}(\langle A\rangle)=\mathfrak{g}$. Let $x,y\in\mathfrak{g}$ be two nonzero elements with $y\notin Kx$.

Then there is a linear homogeneous map $f:\mathfrak{g}\rightarrow\mathfrak{g}$ such that
\begin{enumerate}[(a)]
\item $f(A^{t})\subseteq A^{m}$ with $m=O(t\dim(\mathfrak{g})^{2}+\dim(\mathfrak{g})^{3})$ for all $t\geq 1$,
\item $f(x)\neq 0$, and
\item either $f(y)\in Kf(x)$ or $[f(x),f(y)]\neq 0$.
\end{enumerate}
\end{corollary}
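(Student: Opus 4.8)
The plan is to chain together the three ingredients advertised right before the statement: the extremal basis of Theorem~\ref{th:extremalbasis}, the escape machinery of \S\ref{se:esc}, and the genericity criterion of Proposition~\ref{pr:generic}. Fix an extremal basis $B$ of $\mathfrak{g}$ together with the distinguished element $b_{1}\in B$ provided by Theorem~\ref{th:extremalbasis}, and let $U=U(B,b_{1})$ be the associated set of generic elements. The hypothesis $\mathrm{char}(K)=0$ or $\mathrm{char}(K)\geq 3\dim(\mathfrak{g})$ is exactly what guarantees (via Theorem~\ref{th:extremalbasis}) that $U$ is carved out by finitely many nonvanishing conditions, so $U$ is the complement of a union of proper linear (or low-degree) subvarieties; in particular $U\neq\varnothing$ and, more to the point, no proper subspace of $\mathfrak{g}$ is contained in the complement of $U$.

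The core of the argument is to build $f$ so that $f(x)$ lands inside $U$, or at least close enough that Proposition~\ref{pr:generic} can be applied. First I would use the escape results of \S\ref{se:escturr}: since $\mathfrak{g}$ is a simple turrifiable algebra (Proposition~\ref{pr:liemalcev}), Corollary~\ref{co:vspanturr} applied to $v=x$ shows that $\mathcal{T}_{\leq D+D}(x,A)$ spans all of $\mathfrak{g}$, where $D=\dim(\mathfrak{g})$. Each element $w$ of this set is, by construction, obtained from a fixed tower expression by substituting $x$ into one slot, hence defines a linear homogeneous map $w(\cdot):\mathfrak{g}\to\mathfrak{g}$ with $w(A^{t})\subseteq A^{O(tD+D^{2})}$. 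Because these $w$ span $\mathfrak{g}$, and because the complement of $U$ contains no subspace, some $K$-linear combination of them — equivalently, after passing to the image of a single well-chosen $w$, or to a short sum of $w$'s and invoking the escape (Proposition~\ref{pr:spanturr}) and Lemma~\ref{le:quadresc}(a),(b) to avoid the bad degree-$\le 2$ loci — produces a value lying in $U$. The only subtlety is that we want a \emph{single} linear homogeneous map $f$, not a linear combination evaluated at $x$; but a linear combination of the maps $w(\cdot)$ is itself linear homogeneous, and its cost in terms of $A^{k}$ is still $O(tD^{2}+D^{3})$ since we combine $O(D)$ towers each of length $O(D)$. This gives a linear homogeneous $f$ with $f(x)\in U$ and hence $f(x)\neq 0$; the word-length bound $m=O(tD^{2}+D^{3})$ follows by tracking the tower lengths through Corollary~\ref{co:vspanturr} (each tower has $\le 2D$ brackets, the substituted map costs $\le D$ copies of $A$ per bracket, and we take a sum of $O(D)$ of them).

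For the final clause, apply Proposition~\ref{pr:generic} to the pair $f(x),f(y)$. If $[f(x),f(y)]\neq 0$ we are done. Otherwise $[f(x),f(y)]=0$; if in addition $[[f(x),z],[f(y),z]]=0$ for all $z$, then since $f(x)\in U$ Proposition~\ref{pr:generic} forces $f(y)\in Kf(x)$, which is the remaining alternative. The one gap this leaves is the case $[f(x),f(y)]=0$ but $[[f(x),z_{0}],[f(y),z_{0}]]\neq 0$ for some $z_{0}$: here one composes $f$ with a further linear homogeneous map, namely $u\mapsto[u,z_{0}]$ evaluated appropriately — more precisely one replaces $f$ by $x'\mapsto[f(x'),z_0']$ for a suitable $z_0'$ expressible through $A$ — which is exactly the kind of single-bracket adjustment whose escape is handled by Lemma~\ref{le:quadresc}(a),(b); choosing $z_{0}$ inside $A^{[\le s(D)]}$ (possible since $A$ spans $\mathfrak{g}$ by Proposition~\ref{pr:span}) keeps the cost within the stated bound. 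I expect the main obstacle to be precisely this bookkeeping: ensuring that, after composing escapes to reach $U$ and then possibly one more bracket to fix the commutator condition, the resulting map is still honestly linear homogeneous and the accumulated word length stays $O(tD^{2}+D^{3})$ rather than blowing up exponentially — the exponential blowups in \S\ref{se:esc} are all confined to the non-turrifiable estimates, so the turrifiability of Lie algebras (Proposition~\ref{pr:liemalcev}) is what must be invoked at every step to keep the bound polynomial in $D$.
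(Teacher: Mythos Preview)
Your proposal follows the paper's route almost exactly---extremal basis, escape into $U$ via towers built on $x$, then Proposition~\ref{pr:generic} and a final bracket with some $z$. Two points, however, are underspecified enough to be genuine gaps.

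First, the characteristic hypothesis is misplaced. Theorem~\ref{th:extremalbasis} needs only $\mathrm{char}(K)\neq 2$; the condition $\mathrm{char}(K)\geq 3D$ enters only in the escape into $U$. Your phrase ``some $K$-linear combination of them $\ldots$ lands in $U$'' is not enough: an arbitrary $K$-linear combination $g=\sum c_{j}f_{j}$ does \emph{not} satisfy $g(A^{t})\subseteq A^{m}$ for any controlled $m$. You must use \emph{bounded integer} coefficients $0\leq c_{j}\leq 3D-2$, and the point of $\mathrm{char}(K)\geq 3D$ is precisely that these give $(3D-1)^{D}$ pairwise distinct elements of $\mathfrak{g}$. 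The paper then runs an iterated pigeonhole: on each line of such points, either two lie in the same $V_{i}$ or three lie in the same $V'_{i}$; Lemma~\ref{le:quadresc}(a),(b) then swallows the whole line, and iterating over all $D$ directions would force some $V_{i}$ or $V'_{i}$ to equal $\mathfrak{g}$, contradicting Theorem~\ref{th:extremalbasis}. Hence some integer combination $w=g(x)$ lies in $U$, and $g(A^{t})\subseteq A^{D(3D-2)(t+2D-1)}$.

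Second, in the residual case $[f(x),f(y)]=0$ but $[[f(x),z],[f(y),z]]\neq 0$ for some $z\in\mathfrak{g}$, you cannot simply ``choose $z_{0}$ inside $A^{[\leq s(D)]}$'': the bad locus $\{z:[[f(x),z],[f(y),z]]=0\}$ is quadratic, so spanning $\mathfrak{g}$ does not by itself produce an escaping element in a small power of $A$. This is exactly what Lemma~\ref{le:quadresc}\eqref{le:quadresctot} is for: it reduces to $z=s_{i}$ or $z=s_{i}+s_{j}$ for any fixed basis $\{s_{i}\}$, and by Proposition~\ref{pr:spanturr} (not Proposition~\ref{pr:span}, which is exponential) such a basis sits in $\mathcal{T}_{\leq D}(A)\subseteq A^{D}$. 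Then $f:=[g(\cdot),z]$ has the required properties with the stated polynomial bound.
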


\begin{proof}
If $[x,y]\neq 0$ then the identity map is a valid candidate for $f$, so assume $[x,y]=0$. Let $D:=\dim(\mathfrak{g})$. Fix an extremal basis $B=\{b_{1},\ldots,b_{D}\}$ of $\mathfrak{g}$ and an element $b_{1}\in B$ satisfying the conditions of Theorem~\ref{th:extremalbasis}, and define
\begin{align*}
V_{i} & =\{z\in\mathfrak{g}\ |\ \lambda_{b_{i}}(z)=0\}=\{z\in\mathfrak{g}\ |\ [[z,b_{i}],b_{i}]=0\} & & 1\leq i\leq D, \\
V'_{i} & =\{z\in\mathfrak{g}\ |\ [[z,b_{1}],[z,b_{i}]]=0\} & & 2\leq i\leq D.
\end{align*}
By Theorem~\ref{th:extremalbasis}, none of the $V_{i},V'_{i}$ is the whole $\mathfrak{g}$. Moreover, the definition of $U$ yields
\begin{equation*}
\mathfrak{g}=U\sqcup\left(\bigcup_{i=1}^{D}V_{i}\cup\bigcup_{i=2}^{D}V'_{i}\right).
\end{equation*}

By Corollary~\ref{co:vspanturr}, $\mathcal{T}_{\leq 2D}(x,A)$ spans the whole $\mathfrak{g}$: in other words, there is a basis $\{z_{j}\}_{j\leq D}$ of $\mathfrak{g}$ and there are linear homogeneous maps $f_{j}:\mathfrak{g}\rightarrow\mathfrak{g}$ such that $f_{j}(A^{t})\subseteq A^{t+2D-1}$ and $f_{j}(x)=z_{j}$. For notational convenience, we identify integers with their image in $K$ via the (unique) ring homomorphism from $\mathbb{Z}$ to $K$. Then, by the hypothesis on $\mathrm{char}(K)$, the elements of the form
\begin{align*}
& c_{1}z_{1}+c_{2}z_{2}+\ldots+c_{D}z_{D}, & & 0\leq c_{j}\leq 3D-2,
\end{align*}
are all pairwise distinct. Our first goal is to show that among these elements there is at least one $w$ not contained in any of the $V_{i},V'_{i}$ (i.e.\ $w\in U$). We assume otherwise, and prove that this leads to a contradiction.

Start by fixing any $c'_{2},\ldots,c'_{D}$. There are $3D-1$ elements in the set
\begin{equation*}
S_{1}(c'_{2},\ldots,c'_{D})=\{c_{1}z_{1}+c'_{2}z_{2}+\ldots+c'_{D}z_{D}\ |\ 0\leq c_{1}\leq 3D-2\},
\end{equation*}
so there are either at least two elements $w_{1},w_{2}$ in the same $V_{i}$ or at least three elements $w_{1},w_{2},w_{3}$ in the same $V'_{i}$. These elements are collinear, so we may use Lemma~\ref{le:quadresc}\eqref{le:quadresclin} in the first case and Lemma~\ref{le:quadresc}\eqref{le:quadrescquadr} in the second case; thus, the whole line $Kz_{1}+c'_{2}z_{2}+\ldots+c'_{D}z_{D}$ is contained in one of the $V_{i},V'_{i}$.

Now fix $c'_{3},\ldots,c'_{D}$. There are $3D-1$ lines in the set
\begin{equation*}
S_{2}(c'_{3},\ldots,c'_{D})=\{Kz_{1}+c_{2}z_{2}+c'_{3}z_{3}+\ldots+c'_{D}z_{D}\ |\ 0\leq c_{2}\leq 3D-2\},
\end{equation*}
so there are either at least two lines $W_{1},W_{2}$ in the same $V_{i}$ or at least three lines $W_{1},W_{2},W_{3}$ in the same $V'_{i}$. Using Lemma~\ref{le:quadresc}\eqref{le:quadresclin}-\eqref{le:quadrescquadr} pointwise (i.e.\ on pairs or triples of points on such lines), we obtain that the whole $Kz_{1}+Kz_{2}+c'_{3}z_{3}+\ldots+c'_{D}z_{D}$ is contained in one of the $V_{i},V'_{i}$. Iterating the procedure, we conclude that the whole $\mathfrak{g}$ is contained in one of the $V_{i},V'_{i}$, in contradiction with Theorem~\ref{th:extremalbasis}.

Hence, there must be one element $w=c_{1}z_{1}+\ldots+c_{D}z_{D}\in U$; in addition, $w\neq 0$ since $0\notin U$. Let $g:=c_{1}f_{1}+\ldots+c_{D}f_{D}$. We have $g(A^{t})\subseteq A^{D(3D-2)(t+2D-1)}$ and $g(x)=w$. By Proposition~\ref{pr:generic}, either $g(y)=0$ or $[w,g(y)]\neq 0$ or $g(y)\in Kw$ or there is some $z\in\mathfrak{g}$ for which $[[w,z],[g(y),z]]\neq 0$. In all but the last case, the result follows by taking $f:=g$. In the last case, by Lemma~\ref{le:quadresc}\eqref{le:quadresctot} there must be some $z=s_{i}$ or $z=s_{i}+s_{j}$ such that $[[w,z],[g(y),z]]\neq 0$, where $\{s_{i}\}_{i}$ is any basis of $\mathfrak{g}$. By Proposition~\ref{pr:spanturr} there is one such basis inside $\mathcal{T}_{\leq D}(A)\subseteq A^{D}$, so we can define $f:=[g(\cdot),z]$: since $f(A^{t})\subseteq A^{D(3D-2)(t+2D-1)+D}$ and $[f(x),f(y)]\neq 0$ (implying $f(x)\neq 0$ as well), we are done.
\end{proof}

\section{Descent}\label{se:descent}

By Corollary~\ref{co:generic}, we can show that it is possible to reduce the dimension of a vector space and preserve the density of $A$, due to the effects of Theorem~\ref{th:dimest} when going through a linear homogeneous map.

\begin{proposition}\label{pr:descent}
Let $\mathfrak{g}$ be a classical Lie algebra over a field $K$ with $\mathrm{char}(K)=0$ or $\mathrm{char}(K)\geq 3\dim(\mathfrak{g})$, and let $A\subseteq\mathfrak{g}$ be a symmetric set with $\mathrm{span}_{K}(\langle A\rangle)=\mathfrak{g}$. Let $V$ be a $K$-vector subspace of $\mathfrak{g}$ with $\dim(V)>1$.

Then there is a $K$-vector subspace $W$ of $\mathfrak{g}$ with $0<\dim(W)<\dim(V)$ such that
\begin{equation*}
|A^{t}\cap V|\leq|A^{m}\cap W|\cdot|A^{k}|^{\frac{\dim(V)-\dim(W)}{\dim(\mathfrak{g})}}
\end{equation*}
for any $t\geq 1$, with
\begin{align*}
m & =m(t,\dim(\mathfrak{g}))=O(t\dim(\mathfrak{g})^{2}+\dim(\mathfrak{g})^{3}), \\
k & =k(t,\dim(\mathfrak{g}))=O(t\dim(\mathfrak{g})+\dim(\mathfrak{g})^{2}).
\end{align*}
\end{proposition}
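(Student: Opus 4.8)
The plan is to pick a generic pair of elements on which to apply Corollary~\ref{co:generic}, then feed the resulting linear homogeneous map into Theorem~\ref{th:dimest} to trade dimension for density. First I would handle the trivial degenerate case: if $|A^{t}\cap V|\leq 1$, then any line $W\subseteq V$ with $\dim(W)=1<\dim(V)$ works, since the right-hand side is at least $|A^{m}\cap W|\geq 1\geq|A^{t}\cap V|$ (using $0\in A^{m}\cap W$). So assume $|A^{t}\cap V|\geq 2$, i.e.\ there are two distinct elements of $A^{t}$ in $V$; subtracting, there is a nonzero $x\in(A^{t}-A^{t})\cap V\subseteq A^{2t}\cap(V-V)$. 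Since $\dim(V)>1$, pick also some $y\in V$ with $y\notin Kx$ — I can take $y$ among the elements of $A^{t}$ in $V$, or more robustly among $(A^{t}-A^{t})$, arranging $y\notin Kx$ because two distinct $A^{t}$-elements of $V$ generically span a plane with $x$; if not, a small further step inside $V$ produces such a $y$. (This is a point to be careful about: one must guarantee a non-collinear companion $y$ lying in a controlled power of $A$.)

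Next I would invoke Corollary~\ref{co:generic} with this $x,y$: it produces a linear homogeneous map $f:\mathfrak{g}\rightarrow\mathfrak{g}$ with $f(A^{s})\subseteq A^{O(s\dim(\mathfrak{g})^{2}+\dim(\mathfrak{g})^{3})}$, with $f(x)\neq 0$, and with either $f(y)\in Kf(x)$ or $[f(x),f(y)]\neq 0$. Now consider the linear affine (in fact linear homogeneous) map $F:\mathfrak{g}\times\mathfrak{g}\rightarrow\mathfrak{g}\times\mathfrak{g}$, $F(u,v)=(f(u),f(v))$ — or more simply apply $f$ to $V$ directly. Because $f$ is linear homogeneous and $x\in V$ with $f(x)\neq 0$, the image $f(V)$ is a $K$-vector subspace of positive dimension; and because $V$ contains $y\notin Kx$ while $f$ either collapses $y$ onto $Kf(x)$ or keeps $[f(x),f(y)]\neq 0$, the point is that $f(V)$ is a subspace which — after one more application of the bracket against $f(x)$ if needed — can be shown to be a proper subspace of $\mathfrak{g}$, or else $V$ already maps into something small. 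Concretely: let $V_{1}=f(V)$. If $\dim(V_{1})<\dim(V)$ we are already descending. If $\dim(f(V))=\dim(V)$, then $f|_{V}$ is injective, so in particular $f(y)\notin Kf(x)$, whence by Corollary~\ref{co:generic} $[f(x),f(y)]\neq 0$; then the map $z\mapsto[f(z),f(x)]$ (another linear homogeneous map, in a controlled power of $A$) is nonzero on $V$ but kills $x\in V$, so its image $W_{0}$ has $0<\dim(W_{0})<\dim(V)$. In either case we land on a subspace of strictly smaller positive dimension reached by a linear homogeneous map $h$ with $h(A^{t})\subseteq A^{m}$ for $m=O(t\dim(\mathfrak{g})^{2}+\dim(\mathfrak{g})^{3})$.

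Having found such an $h$ and the target subspace, I would apply Proposition~\ref{pr:imfibre} (or directly the fibre-counting of Proposition~\ref{pr:linearmap}) to the restriction of $h$ to $V$: writing $V$ as the union of fibres over $h(V)\cap A^{m}$,
\begin{equation*}
|A^{t}\cap V|\leq|A^{m}\cap h(V)|\cdot\max_{x_{0}}|A^{t}\cap(h^{-1}(x_{0})\cap V)|,
\end{equation*}
where the largest fibre $h^{-1}(x_{0})\cap V$ is a linear affine space of dimension $\dim(V)-\dim(h(V))$. Setting $W:=h(V)$ (a $K$-vector subspace after translating, with $0<\dim(W)<\dim(V)$), the first factor is $|A^{m}\cap W|$, and the second factor is bounded by Theorem~\ref{th:dimest}\eqref{th:dimestturr} applied to the fibre — a linear affine space of dimension $\dim(V)-\dim(W)$ — giving $|A^{t}\cap\text{fibre}|\leq|A^{k}|^{(\dim(V)-\dim(W))/\dim(\mathfrak{g})}$ with $k=tD+\tfrac{3}{2}D(D-1)=O(t\dim(\mathfrak{g})+\dim(\mathfrak{g})^{2})$, since classical Lie algebras are turrifiable by Proposition~\ref{pr:liemalcev}. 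Multiplying the two factors yields exactly the claimed inequality with the stated $m$ and $k$.

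The main obstacle I anticipate is the bookkeeping in the middle paragraph: ensuring the target subspace $W$ is genuinely of dimension strictly between $0$ and $\dim(V)$, in every branch of the dichotomy from Corollary~\ref{co:generic}, and that the map realizing the descent is a single linear homogeneous map sending $A^{t}$ into $A^{m}$ with $m=O(t\dim(\mathfrak{g})^{2}+\dim(\mathfrak{g})^{3})$. In particular the case analysis — $f(V)$ proper versus $f|_{V}$ injective, and within the latter the subcase $f(y)\in Kf(x)$ (excluded by injectivity) versus $[f(x),f(y)]\neq 0$ (used to get a nonzero bracket map vanishing at $x$) — has to be organized so that no branch falls through, and so that composing $f$ with at most one extra bracket-against-a-basis-element step keeps the word length within the advertised bound; the degree bound and the turrifiable version of Theorem~\ref{th:dimest} then make the $k$-estimate routine.
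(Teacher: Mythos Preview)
Your approach is the paper's: build a linear homogeneous map $h$ with $0<\dim(h(V))<\dim(V)$ from Corollary~\ref{co:generic} (possibly composed with one bracket against $f(x)$), then fibre-count and bound the fibre via Theorem~\ref{th:dimest}\eqref{th:dimestturr}. Your case split on whether $f|_{V}$ is injective is a valid reorganization of the paper's split on the two alternatives of Corollary~\ref{co:generic}.

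The obstacle you flag is not real. Corollary~\ref{co:generic} requires only $y\neq 0$ and $y\notin Kx$; it does \emph{not} ask that $y$ lie in any power of $A$, and $y$ never enters the power-of-$A$ bookkeeping --- its sole role is to certify $h(y)\neq 0$. So take $y$ to be any element of $V\setminus Kx$ (one exists since $\dim(V)>1$) and the worry disappears. Only $x$ must lie in a bounded power of $A$, because $f(x)$ appears inside the bracket defining $h$ in your injective branch; you already have $x\in A^{t}$ once you note $0\in A^{t}\cap V$. The paper resolves this point differently: it first disposes of the case where $A^{t}\cap V$ spans only a proper subspace $V'\subsetneq V$ (taking $W=V'$, reached by a coordinate projection fixing $V'$ pointwise), and in the remaining case two non-collinear elements of $A^{t}\cap V$ exist automatically. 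Your route, once $y$ is freed, is slightly shorter.
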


\begin{proof}
If $A^{t}\cap V=\{0\}$ the result is immediate, so we assume otherwise. Let $D:=\dim(\mathfrak{g})$ and $d:=\dim(V)$. Our main goal is to find a linear homogeneous map $f:\mathfrak{g}\rightarrow\mathfrak{g}$ such that we have both $f(A^{t}\cap V)\subseteq A^{m}\cap f(V)$ and $0<\dim(f(V))<d$.

Suppose first that $A^{t}\cap V$ spans a proper $K$-vector space $V'$ of $V$, of dimension $0<d'<d$. We can build a basis of $V$ containing a basis of $V'$ as a proper subset (say $s_{1},\ldots,s_{d}$ spanning $V$ and $s_{1},\ldots,s_{d'}$ spanning $V'$); then we take $f$ to be the linear homogeneous map sending $s_{j}$ to itself for $j\leq d'$ and to $0$ for $d'<j\leq d$. This definition yields $f(V)=V'$ with $V'$ stabilized pointwise, and since in this case $A^{t}\cap V=A^{t}\cap V'$ we are done.

Suppose now that $A^{t}\cap V$ spans the whole $V$. Since $\dim(V)>1$, we can fix two nonzero elements $x,y\in A^{t}\cap V$ with $y\notin Kx$. By Corollary~\ref{co:generic}, there is a constant $m_{1}=O(tD^{2}+D^{3})$ and there is a linear homogeneous map $f_{1}$ with $f_{1}(A^{t})\subseteq A^{m_{1}}$, $f_{1}(x)\neq 0$, and either $f_{1}(y)\in Kf_{1}(x)$ or $[f_{1}(x),f_{1}(y)]\neq 0$. In the first case, $f_{1}(y-cx)=0$ for some $c\in K$: this means that $\dim(f(V))<d$ by the rank-nullity theorem (because $y-cx\in V$) and also $\dim(f(V))>0$ (because $x\in V$), so we can take $f:=f_{1}$. In the second case, we define instead $f:=[f_{1}(x),f_{1}(\cdot)]$: this $f$ is linear homogeneous, has $f(A^{t})\subseteq A^{2m_{1}}$, $f(x)=0$ and $f(y)\neq 0$, so by the rank-nullity theorem again $0<\dim(f(V))<d$, and we are done.

Now that we have $f$, we get the bound
\begin{equation*}
|A^{t}\cap V|\leq|A^{m}\cap f(V)|\cdot|A^{t}\cap Y|,
\end{equation*}
where $m=2m_{1}$ and $Y$ is the fibre in $V$ through $f$ whose intersection with $A^{t}$ is largest. The $K$-vector subspace $f(V)$ has the correct dimension by our choice of $f$, and $Y$ is a linear affine space of dimension $d-\dim(f(V))$ by Proposition~\ref{pr:linearmap}. Applying Theorem~\ref{th:dimest}\eqref{th:dimestturr} to $Y$, we conclude the proof.
\end{proof}

Since reducing the dimension preserves the density of $A$, we are able to go down to $\dim(V)=1$. We have the following result.

\begin{theorem}\label{th:onedim}
Let $\mathfrak{g}$ be a classical Lie algebra over a field $K$ with $\mathrm{char}(K)=0$ or $\mathrm{char}(K)\geq 3\dim(\mathfrak{g})$, and let $A\subseteq\mathfrak{g}$ be a symmetric set with $\mathrm{span}_{K}(\langle A\rangle)=\mathfrak{g}$.

Then for $k=e^{O(\dim(\mathfrak{g})\log\dim(\mathfrak{g}))}$ the following holds: for every $\varepsilon>0$, either
\begin{enumerate}[(a)]
\item\label{th:onedimgrowth} $|A^{k}|>|A|^{1+\varepsilon}$, or
\item\label{th:onedimone} there is a vector subspace $V\subsetneq\mathfrak{g}$ with $\dim(V)=1$ such that
\begin{equation*}
|A^{k}\cap V|>|A|^{\frac{1}{\dim(\mathfrak{g})}-\varepsilon}.
\end{equation*}
\end{enumerate}
\end{theorem}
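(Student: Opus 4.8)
The plan is to iterate the dimensional descent of Proposition~\ref{pr:descent}, starting from $V=\mathfrak{g}$ and ending at a line. Write $D=\dim(\mathfrak{g})$, and set $V_{0}=\mathfrak{g}$, $t_{0}=1$, so that $A^{t_{0}}\cap V_{0}=A$. As long as $\dim(V_{i})>1$, Proposition~\ref{pr:descent} (applicable since $\mathrm{char}(K)=0$ or $\mathrm{char}(K)\geq 3D$) furnishes a subspace $V_{i+1}$ with $0<\dim(V_{i+1})<\dim(V_{i})$ and
\begin{equation*}
|A^{t_{i}}\cap V_{i}|\leq|A^{t_{i+1}}\cap V_{i+1}|\cdot|A^{k_{i}}|^{\frac{\dim(V_{i})-\dim(V_{i+1})}{D}},
\end{equation*}
where I set $t_{i+1}=m(t_{i},D)=O(t_{i}D^{2}+D^{3})$ and $k_{i}=O(t_{i}D+D^{2})$. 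The dimensions strictly decrease and stay positive, so after some $J\leq D-1$ steps we reach $\dim(V_{J})=1$. Note that the chain $V_{0}\supsetneq V_{1}\supsetneq\cdots\supsetneq V_{J}$, the index $J$, and the parameters $t_{i},k_{i}$ depend only on $\mathfrak{g}$ and $A$, not on $\varepsilon$.

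Next I would control the blow-up of the parameters: from $t_{i+1}\leq C(t_{i}D^{2}+D^{3})\leq 2CD^{3}t_{i}$ (using $t_{i}\geq 1$) and $t_{0}=1$ one gets $t_{i}\leq(2CD^{3})^{i}$, and since $i\leq D-1$ this is $e^{O(D\log D)}$; the same bound then holds for each $k_{i}=O(t_{i}D+D^{2})$ and for $t_{J}$. Let $k$ be the maximum of $t_{J}$ and of all the $k_{i}$, so that $k=e^{O(D\log D)}$, as the statement demands.

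Finally I would telescope the displayed inequalities over $i=0,\dots,J-1$, bounding each $|A^{k_{i}}|$ by $|A^{k}|$ and $|A^{t_{J}}\cap V_{J}|$ by $|A^{k}\cap V_{J}|$ (legitimate since the exponents are positive and $t_{J},k_{i}\leq k$), and using $\sum_{i=0}^{J-1}(\dim(V_{i})-\dim(V_{i+1}))=D-1$, to obtain
\begin{equation*}
|A|=|A^{t_{0}}\cap V_{0}|\leq|A^{k}\cap V_{J}|\cdot|A^{k}|^{\frac{D-1}{D}}.
\end{equation*}
Now case-split. If $|A^{k}|>|A|^{1+\varepsilon}$ we are in case~\eqref{th:onedimgrowth}. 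Otherwise $|A^{k}|\leq|A|^{1+\varepsilon}$, and substituting this into the last display gives
\begin{equation*}
|A^{k}\cap V_{J}|\geq|A|\cdot|A^{k}|^{-\frac{D-1}{D}}\geq|A|^{1-(1+\varepsilon)\frac{D-1}{D}}=|A|^{\frac{1}{D}-\varepsilon\frac{D-1}{D}}>|A|^{\frac{1}{D}-\varepsilon},
\end{equation*}
the last step being strict because $|A|\geq 2$ (as $A$ generates the more-than-one-dimensional algebra $\mathfrak{g}$) and $\varepsilon\frac{D-1}{D}<\varepsilon$; this is case~\eqref{th:onedimone} with $V=V_{J}$, which is a proper line since $D>1$.

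The telescoping and bookkeeping are routine; the one point genuinely requiring care is checking that the recursion $t_{i+1}=O(t_{i}D^{2}+D^{3})$ iterated $O(D)$ times still yields $k=e^{O(D\log D)}$ rather than a larger tower, and one should also observe that the combined inequality stays valid in the degenerate situations (some $A^{t_{i}}\cap V_{i}$ trivial, or not spanning $V_{i}$) — but this is automatic, because Proposition~\ref{pr:descent} is stated so that its conclusion holds unconditionally in those cases.
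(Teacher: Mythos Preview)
Your proof is correct and follows essentially the same approach as the paper: iterate Proposition~\ref{pr:descent} from $V_{0}=\mathfrak{g}$ down to a line, telescope the inequalities, bound the recursion $t_{i+1}=O(t_{i}D^{2}+D^{3})$ over at most $D$ steps to get $k=e^{O(D\log D)}$, and then split on whether $|A^{k}|>|A|^{1+\varepsilon}$. Your remark that $V_{J}$ and $k$ are chosen independently of $\varepsilon$ is a good observation matching the quantifier order in the statement.
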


\begin{proof}
Let $D:=\dim(\mathfrak{g})$. We apply Proposition~\ref{pr:descent} repeatedly. Starting with $V_{0}=\mathfrak{g}$ and $t_{0}=1$, we have a sequence of vector subspaces $V_{j}$ such that
\begin{equation*}
|A^{t_{j}}\cap V_{j}|\leq|A^{t_{j+1}}\cap V_{j+1}|\cdot|A^{k_{j+1}}|^{\frac{\dim(V_{j})-\dim(V_{j+1})}{D}},
\end{equation*}
where $0<\dim(V_{j+1})<\dim(V_{j})$ and
\begin{align*}
t_{j+1} & =m(t_{j},D), & k_{j+1} & =k(t_{j},D),
\end{align*}
with the functions $m,k$ as in Proposition~\ref{pr:descent}. By the dimensional constraint we reach some $1$-dimensional $V=V_{J}$ in $J\leq D$ steps, and for that $V_{J}$ we obtain
\begin{align}
|A|=|A^{t_{0}}\cap V_{0}| & \leq|A^{t_{J}}\cap V|\cdot\prod_{j=0}^{J-1}|A^{k_{j+1}}|^{\frac{\dim(V_{j})-\dim(V_{j+1})}{D}} \nonumber \\
 & \leq|A^{t_{J}}\cap V|\cdot|A^{k_{J}}|^{1-\frac{1}{D}}. \label{eq:longdescent}
\end{align}
Set $k=\max\{t_{J},k_{J}\}$. If $|A^{k}|\leq|A|^{1+\varepsilon}$ then
\begin{equation}\label{eq:ed}
|A^{k_{J}}|^{1-\frac{1}{D}}\leq|A^{k}|^{1-\frac{1}{D}}\leq|A|^{(1+\varepsilon)\left(1-\frac{1}{D}\right)}<|A|^{1-\frac{1}{D}+\varepsilon}.
\end{equation}
Combining \eqref{eq:longdescent} and \eqref{eq:ed} we conclude the result for $k$ as above. It remains to estimate it: by the constraint on $J$ we get $t_{J}=e^{O(D\log D)}$ and then $k_{J}=e^{O(D\log D)}$ too, and we are done.
\end{proof}

\section{Sum-bracket theorem}\label{se:main}

We can move now to proving the sum-bracket theorem.

\subsection{Proof of Theorem~\ref{th:main}}

Let $D:=\dim(\mathfrak{g})$. As we already discussed in \S\ref{se:introover}, we divide the proof in several steps: first we reduce to a $1$-dimensional space, then we grow in it via the sum-product theorems, and finally we stick it in linearly independent directions. In all these steps we assume that both $\mathrm{char}(K)$ and $|A|$ are large; a further final step will treat the case when either one is small.
\medskip

\textbf{Step 1: reduction to dimension $1$.} From now until almost the end of the proof, we assume that $\mathrm{char}(K)=0$ or $\mathrm{char}(K)\geq 3D$, and also that $A$ is large enough, in the sense that $|A|>\max\{c_{1},e^{c_{2}D}\}$ for any constants $c_{1},c_{2}$ we may need.

Let $m\geq 1$ be an integer: whenever we want to prove cases \eqref{th:mainzero}, \eqref{th:mainprime}, \eqref{th:mainstuck} of the theorem, we take $m=1$. Let $0<\delta\leq 1$: whenever we want to prove cases \eqref{th:mainzero}, \eqref{th:mainprime}, \eqref{th:mainbig} of the theorem, we take $\delta=1$. Define $\eta=\lceil\frac{3\gamma}{\delta}\rceil$, where $\gamma$ is as in Theorem~\ref{th:sumprodAv}: we can ask for $\gamma$ to be a positive integer, so that in particular $\frac{3\gamma}{\delta}\leq\eta<\frac{4\gamma}{\delta}$.

Apply repeatedly Theorem~\ref{th:onedim} with $\varepsilon=\frac{1}{\eta mD}$. Then, either we fall into case \eqref{th:onedimone} in at most $\eta mD$ steps or, after falling into case \eqref{th:onedimgrowth} $\eta mD$ times, we have obtained
\begin{equation}\label{eq:firstgrowth}
|A^{k^{\eta mD}}|>|A|^{\left(1+\frac{1}{\eta mD}\right)^{\eta mD}}>|A|^{2}
\end{equation}
with $k=e^{O(D\log D)}$. If \eqref{eq:firstgrowth} holds, then we have Theorem~\ref{th:main}\eqref{th:mainzero}-\eqref{th:mainprime}-\eqref{th:mainstuck}; moreover, \eqref{eq:firstgrowth} cannot hold under the hypotheses of Theorem~\ref{th:main}\eqref{th:mainbig} since we already start with $|A|>|\mathfrak{g}|^{\frac{1}{2}}$. Thus, assume instead that we fall into Theorem~\ref{th:onedim}\eqref{th:onedimone} in at most $\eta mD$ steps: this means that
\begin{equation}\label{eq:oneattheend}
|A^{k^{\eta mD}}\cap V|\geq|A^{ks}\cap V|>|A^{s}|^{\left(1-\frac{1}{\eta m}\right)\frac{1}{D}}\geq|A|^{\left(1-\frac{1}{\eta m}\right)\frac{1}{D}}
\end{equation}
for some $1\leq s\leq k^{\eta mD-1}$.

Since $V$ is a $1$-dimensional vector subspace, we have $V=Kv$ for any nonzero $v\in V$. Since $|A|\geq 2$, by \eqref{eq:oneattheend} we may take $v\in A^{k^{\eta mD}}\setminus\{0\}$. As $[v,\cdot]$ is not the zero map, there is some $w\in\mathfrak{g}$ such that $[v,w]\neq 0$; by Corollary~\ref{co:vspanturr} and bilinearity, we may assume that $w\in\mathcal{T}_{\leq 2D}(v,A)$. By anti-commutativity, up to switching signs we can rewrite $w$ to be of the form
\begin{equation*}
w=[\ldots[[\ldots[a_{1},a_{2}]\ldots,a_{i}],v]\ldots,a_{j}]
\end{equation*}
for some $0\leq i\leq j<2D$ and some $a_{1},\ldots,a_{j}\in A$.
\medskip

\textbf{Step 2: growth by sum-product.} Set
\begin{equation*}
X:=\{x\in K|xv\in A^{k^{\eta mD}}\},
\end{equation*}
for which $|X|>|A|^{\left(1-\frac{1}{\eta m}\right)\frac{1}{D}}$ by \eqref{eq:oneattheend}. Then we have
\begin{align*}
(X+X)v & =Xv+Xv\subseteq A^{2k^{\eta mD}}\cap Kv, \\
(XX)[v,w] & =[Xv,[\ldots[[\ldots[a_{1},a_{2}]\ldots,a_{i}],Xv]\ldots,a_{j}]] \\
 & \subseteq A^{2k^{\eta mD}+2D-1}\cap K[v,w],
\end{align*}
and $v,[v,w]\neq 0$. Let us argue now separately, depending on which case of the theorem we want to prove: in each of them, we aim to obtain a $1$-dimensional vector space $Z$ in which either we have growth or we reach every element.
\medskip

\textit{Case~\eqref{th:mainzero}.} Let $z:=[v,w]$ and $Z:=Kz$, and set $k':=2k^{3\gamma D}+2D-1$. Using Theorem~\ref{th:sumprodF} and the fact that $A$ is large enough, we conclude that
\begin{equation*}
|A^{3k'}\cap Z|\geq|XX+XX+XX|\geq C^{-1}|X|^{\frac{7}{4}}\geq C^{-1}|A|^{\frac{7}{6D}}\geq|A|^{\frac{8}{7D}}.
\end{equation*}

\textit{Case~\eqref{th:mainprime}.} Let $z:=[v,w]$ and $Z:=Kz$, and set $k':=2k^{3\gamma D}+2D-1$. Again using Theorem~\ref{th:sumprodF} and $A$ large enough, we get
\begin{align*}
|A^{3k'}\cap Z| & \geq|XX+XX+XX|\geq C^{-1}\min\{|X|^{\frac{7}{4}},p\}\geq C^{-1}\min\{|A|^{\frac{7}{6D}},p\} \\
 & \geq\min\{|A|^{\frac{8}{7D}},C^{-1}p\};
\end{align*}
by Cauchy-Davenport (see for instance \cite[Thm.~5.4]{TV06}), from $|A^{3k'}\cap Z|\geq C^{-1}p$ we also obtain $|A^{6Ck'}\cap Z|\geq p$.

\textit{Case~\eqref{th:mainstuck}.} Let
\begin{align*}
z & :=v & k' & :=2k^{\eta D} & & \text{if $|X+X|\geq|XX|$,} \\
z & :=[v,w] & k' & :=2k^{\eta D}+2D-1 & & \text{otherwise,}
\end{align*}
and let $Z:=Kz$. We already have $|X|>|A|^{\left(1-\frac{1}{\eta}\right)\frac{1}{D}}$; by Theorem~\ref{th:dimest} on the other hand $|X|\leq|A^{\frac{5}{2}Dk^{\eta D}}|^{\frac{1}{D}}$ (if $k\geq 2$, say). If $|A^{\frac{5}{2}Dk^{\eta D}}|\geq|A|^{1+\frac{1}{\eta}}$ we are done, so assume otherwise. Since $\eta\geq\frac{3\gamma}{\delta}$, we have
\begin{align*}
|X|^{1-\frac{\delta}{2}} & >|A|^{\left(1-\frac{\delta}{2}\right)\left(1-\frac{1}{\eta}\right)\frac{1}{D}}>|A|^{\frac{1-\delta}{D}}, \\
|X|^{1+\frac{\delta}{2}} & \leq|A^{\frac{5}{2}Dk^{\eta D}}|^{\left(1+\frac{\delta}{2}\right)\frac{1}{D}}<|A|^{\left(1+\frac{\delta}{2}\right)\left(1+\frac{1}{\eta}\right)\frac{1}{D}}<|A|^{\frac{1+\delta}{D}}.
\end{align*}
By hypothesis, there is no subfield $K'$ with $|A|^{\frac{1-\delta}{D}}\leq|K'|\leq|A|^{\frac{1+\delta}{D}}$; thus, there is no subfield with $|X|^{1-\frac{\delta}{2}}\leq|K'|\leq|X|^{1+\frac{\delta}{2}}$ either. Finally, Theorem~\ref{th:sumprodAv} gives us
\begin{equation*}
|A^{k'}\cap Z|\geq\max\{|X+X|,|XX|\}\geq|X|^{1+\frac{\delta}{2\gamma}}\geq|A|^{\left(1+\frac{\delta}{2\gamma}\right)\left(1-\frac{1}{\eta}\right)\frac{1}{D}}\geq|A|^{\left(1+\frac{\delta}{8\gamma}\right)\frac{1}{D}}
\end{equation*}
since $\eta<\frac{4\gamma}{\delta}$ and $\frac{\gamma}{\delta}\geq 1$.

\textit{Case~\eqref{th:mainbig}.} Let $z:=[v,w]$ and $Z:=Kz$, and set $k':=2k^{3\gamma mD}+2D-1$. By the hypothesis, $|X|>q^{\frac{1}{2}\left(1-\frac{1}{3\gamma m}\right)\left(1+\frac{1}{m}\right)}\geq q^{\frac{1}{2}\left(1+\frac{1}{3m}\right)}$ since $\gamma,m\geq 1$. Using Theorem~\ref{th:sumprodQ}, we conclude that
\begin{equation*}
A^{3mk'}\cap Z\supseteq(3mXX)z\supseteq Z\setminus\{0\},
\end{equation*}
and $A^{3mk'}$ contains also $0$ since $A$ is symmetric.
\medskip

\textbf{Step 3: independent directions.} We now return to treating all cases at the same time. Collecting together all our choices, we have $k'=k^{O(mD/\delta)}=e^{O(mD^{2}\log D/\delta)}$. Set $k''=\max\{3k',6Ck',k',3mk'\}$, and define
\begin{equation*}
Y=\{y\in K|yz\in A^{k''}\}.
\end{equation*}
By Corollary~\ref{co:vspanturr} the set $\mathcal{T}_{\leq 2D}(z,A)\subseteq A^{k'+2D-1}$ contains a basis of $\mathfrak{g}$; moreover, by definition the elements $z_{i}$ of this basis are written in terms of $z$ in such a way that we have $yz_{i}\in\mathcal{T}_{\leq 2D}(yz,A)$ for any $y\in K$. Thus $Yz_{i}\subseteq A^{k''+2D-1}$, and since the $z_{i}$ form a basis
\begin{equation*}
A^{D(k''+2D-1)}\supseteq\bigoplus_{i=1}^{D}Yz_{i}.
\end{equation*}
This translates to
\begin{align*}
|A^{D(k''+2D-1)}| & \geq\prod_{i=1}^{D}|Yz_{i}|\geq|A|^{\frac{8}{7}} & & \text{in case~\eqref{th:mainzero},} \\
|A^{D(k''+2D-1)}| & \geq\prod_{i=1}^{D}|Yz_{i}|\geq\min\{|A|^{\frac{8}{7}},p^{D}\} & & \text{in case~\eqref{th:mainprime},} \\
|A^{D(k''+2D-1)}| & \geq\prod_{i=1}^{D}|Yz_{i}|\geq|A|^{1+\frac{\delta}{8\gamma}} & & \text{in case~\eqref{th:mainstuck},} \\
A^{D(k''+2D-1)} & \supseteq\bigoplus_{i=1}^{D}Kz_{i}=\mathfrak{g} & & \text{in case~\eqref{th:mainbig}.}
\end{align*}
As $k''=O(mk')=e^{O(mD^{2}\log D/\delta)}$, the theorem is proved.
\medskip

\textbf{Step 4: $A$ or $\mathrm{char}(K)$ small.} We now forgo the hypotheses that we set in Step 1. Assume first that $A$ is small, namely $|A|\leq\max\{c_{1},e^{c_{2}D}\}$ for some constants $c_{1},c_{2}$. Denote by $\varepsilon_{0},k_{0}$ the values of $\varepsilon,k$ for which the theorem is true for all large sets. By Lemma~\ref{le:linsize}, there is $h=e^{O(D)}$ for which $A^{h}$ is large: since $|A|\leq|A^{h}|$, cases \eqref{th:mainzero}, \eqref{th:mainprime}, \eqref{th:mainbig} of the theorem hold automatically for $A$ small with $\varepsilon=\varepsilon_{0}$ and $k=hk_{0}$.

Consider case~\eqref{th:mainstuck}: by hypothesis there is no $K'$ with $|A|^{\frac{1-\delta}{\dim(\mathfrak{g})}}\leq|K'|\leq|A|^{\frac{1+\delta}{\dim(\mathfrak{g})}}$. Assume that $|A^{h}|\leq|A|^{1+\frac{\delta}{3}}$ (otherwise we are done). Then
\begin{align*}
|A^{h}|^{\frac{1-\delta/2}{\dim(\mathfrak{g})}} & \geq|A|^{\frac{1-\delta/2}{\dim(\mathfrak{g})}}>|A|^{\frac{1-\delta}{\dim(\mathfrak{g})}}, \\
|A^{h}|^{\frac{1+\delta/2}{\dim(\mathfrak{g})}} & \leq|A|^{\left(1+\frac{\delta}{2}\right)\left(1+\frac{\delta}{3}\right)\frac{1}{\dim(\mathfrak{g})}}\leq|A|^{\frac{1+\delta}{\dim(\mathfrak{g})}},
\end{align*}
so there is no $K'$ with $|A^{h}|^{\frac{1-\delta/2}{\dim(\mathfrak{g})}}\leq|K'|\leq|A^{h}|^{\frac{1+\delta/2}{\dim(\mathfrak{g})}}$. Hence, case~\eqref{th:mainstuck} holds for $A$ small with $\varepsilon=\frac{\varepsilon_{0}}{2}$ and $k=hk_{0}^{2}$.

Finally, let $\mathrm{char}(K)=p$ with $0<p<3D$. By Lemma~\ref{le:linsize} then $|A^{k}|\geq\min\{(3D)^{D},|\langle A\rangle|\}\geq p^{D}$ for some $k=e^{O(D\log D)}$, and \eqref{th:mainprime} follows.

\subsection{Proof of Corollary~\ref{co:diam}}

Much like with groups (see \cite[\S 6.1]{BDH21}), once we have growth of sets inside a finite Lie algebra $\mathfrak{g}$, it is straightforward to obtain bounds on the diameter of $\mathfrak{g}$.

Let $D:=\dim(\mathfrak{g})$. Apply Theorem~\ref{th:main}\eqref{th:mainprime} to $A$, $A^{k}$, $A^{k^{2}}$... until we cover $\mathfrak{g}$, i.e.\ until $A^{k^{m}}=\mathfrak{g}$. It follows that
\begin{equation*}
m\leq\frac{\log\log|\mathfrak{g}|-\log\log|A|}{\log(1+\varepsilon)}=O(\log\log p+\log D)
\end{equation*}
and
\begin{equation*}
\mathrm{diam}(\mathfrak{g})\leq k^{m}=e^{O(D^{2}\log D(\log\log p+\log D))}=(D\log p)^{O(D^{2}\log D)},
\end{equation*}
proving the bound.

\appendix
\section{Appendix}

\subsection{Proof of Theorem~\ref{th:extremalbasis}\eqref{th:extremalbasisnofour}}\label{se:appextr}

We prove Theorem~\ref{th:extremalbasis}\eqref{th:extremalbasisnofour} here and not in the main body of the paper, since it involves a lengthier case-by-case analysis. For each $\mathfrak{g}$, we build explicitly an extremal basis and verify the property in question; the author suspects that \textit{any} $B$ and $b_{1}$ would do, as a matter of fact.

To produce $B$, one could start from the sets of extremal generators described in \cite[\S 8]{CSUW01}, and then use repeatedly the following result to build an explicit extremal basis.

\begin{lemma}\label{le:extrbra}
If $x,y$ are extremal, then $x+[x,y]+\frac{1}{2}\lambda_{y}(x)y$ is extremal.
\end{lemma}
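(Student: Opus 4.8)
The plan is to realize $x+[x,y]+\tfrac12\lambda_y(x)y$ as the image of $x$ under an automorphism of $\mathfrak g$ built out of $y$, so that extremality is simply transported from $x$. Write $D:=\mathrm{ad}_y$, i.e.\ $D(a)=[y,a]$. Using anti-commutativity and the extremality of $y$ one gets $D^2(a)=[y,[y,a]]=[[a,y],y]=\lambda_y(a)\,y$ for every $a\in\mathfrak g$; in particular $D^2(\mathfrak g)\subseteq Ky$, and since $[y,y]=0$ also $D^3=0$. When $\mathrm{char}(K)\neq 2$ this lets us define the $K$-linear map $\phi:=\mathrm{id}-D+\tfrac12 D^2$ (morally $\exp(-\mathrm{ad}_y)$), and
\begin{equation*}
\phi(x)=x-[y,x]+\tfrac12\lambda_y(x)y=x+[x,y]+\tfrac12\lambda_y(x)y,
\end{equation*}
which is exactly the element in the statement. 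So it suffices to show $\phi$ is an automorphism of the Lie algebra $\mathfrak g$.

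Bijectivity is immediate: from $D^3=0$ one checks directly that $(\mathrm{id}+D+\tfrac12 D^2)$ is a two-sided inverse of $\phi$ (the product equals $\mathrm{id}+\tfrac14 D^4=\mathrm{id}$). For multiplicativity, $D$ is a derivation of $\mathfrak g$ — this is precisely the Jacobi identity — so expanding $[\phi a,\phi b]$ and $\phi[a,b]$ via the Leibniz rule for $D$ (and the induced rule $D^2[a,b]=[D^2a,b]+2[Da,Db]+[a,D^2b]$) yields, after cancellation of the common terms,
\begin{equation*}
[\phi a,\phi b]-\phi[a,b]=-\tfrac12[Da,D^2 b]-\tfrac12[D^2 a,Db]+\tfrac14[D^2 a,D^2 b].
\end{equation*}
Now substitute $D^2a=\lambda_y(a)y$, $D^2b=\lambda_y(b)y$: the last term vanishes because $[y,y]=0$, while $[Da,D^2 b]=\lambda_y(b)[[y,a],y]=-\lambda_y(a)\lambda_y(b)y$ and $[D^2 a,Db]=\lambda_y(a)[y,[y,b]]=\lambda_y(a)\lambda_y(b)y$, so the two remaining terms cancel. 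Hence $\phi$ is a homomorphism, and therefore an automorphism.

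Finally, any automorphism $\phi$ of $\mathfrak g$ carries extremal elements to extremal ones: if $[[\mathfrak g,z],z]\subseteq Kz$ then, using surjectivity of $\phi$, $[[\mathfrak g,\phi z],\phi z]=\phi([[\mathfrak g,z],z])\subseteq K\phi(z)$. Applying this with $z=x$ gives that $\phi(x)=x+[x,y]+\tfrac12\lambda_y(x)y$ is extremal. I do not expect a genuine obstacle here; the only delicate point is the sign bookkeeping in $D^2(a)=\lambda_y(a)y$ and the cancellation above, which hinges on the rank-one nature of $D^2$ (so the would-be obstruction $[D^2a,D^2b]$ is forced into $[Ky,Ky]=0$). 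This is also what makes the argument valid in characteristic $3$, where the generic ``$\exp$ of a nilpotent derivation'' reasoning would otherwise require $\mathrm{char}(K)\notin\{2,3\}$.
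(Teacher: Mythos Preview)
Your proof is correct and follows essentially the same approach as the cited reference \cite[Lemma~2.4]{CSUW01}: one defines the exponential $\phi=\exp(-\mathrm{ad}_y)=\mathrm{id}-\mathrm{ad}_y+\tfrac12\mathrm{ad}_y^2$ (well-defined since $\mathrm{ad}_y^3=0$ for $y$ extremal), checks via the rank-one nature of $\mathrm{ad}_y^2$ that $\phi$ is a Lie algebra automorphism, and concludes that $\phi(x)=x+[x,y]+\tfrac12\lambda_y(x)y$ is extremal. Your sign bookkeeping and the cancellation in the multiplicativity check are accurate.
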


\begin{proof}
See the proof of \cite[Lemma~2.4]{CSUW01}.
\end{proof}

Such a procedure is quite taxing to transcribe and to verify. Below, we follow more economical routes.

In all that follows, $\mathrm{char}(K)\neq 2$ is always implicitly assumed. For the ``very'' classical Lie algebras, we fix the following specific representations:
\begin{align*}
\mathfrak{sl}_{n}(K) & =\{x\in\mathrm{Mat}_{n}(K)|\mathrm{tr}(x)=0\}, \\
\mathfrak{so}_{n}(K) & =\{x\in\mathrm{Mat}_{n}(K)|x^{\top}=-x\}, \\
\mathfrak{sp}_{2n}(K) & =\left\{\left.\begin{pmatrix}x_{1} & x_{2} \\ x_{3} & x_{4}\end{pmatrix}\right|x_{i}\in\mathrm{Mat}_{n}(K),x_{1}=-x_{4}^{\top},x_{2}=x_{2}^{\top},x_{3}=x_{3}^{\top}\right\},
\end{align*}
where $x^{\top}$ denotes the transpose of $x$. Extremal elements for the algebras above will be built explicitly as matrices. For any two indices $1\leq i,j\leq N$ we denote by $e_{ij}$ the matrix in $\mathrm{Mat}_{N}(K)$ having $1$ at the $(i,j)$-th coordinate and $0$ everywhere else. The following elements are extremal in $\mathrm{Mat}_{N}(K)$ itself, thereby proving that they are extremal in any subalgebra as well:
\begin{align}
b & =e_{ij} & \text{with }\lambda_{b}(z) & =-2z_{ji}, \label{eq:extr1} \\
b & =e_{ij}+e_{ij'}+e_{i'j}+e_{i'j'} & \text{with }\lambda_{b}(z) & =-2(z_{ji}+z_{j'i}+z_{ji'}+z_{j'i'}), \label{eq:extr4} \\
b & =e_{ij}+e_{ij'}-e_{i'j}-e_{i'j'} & \text{with }\lambda_{b}(z) & =-2(z_{ji}+z_{j'i}-z_{ji'}-z_{j'i'}). \label{eq:extr22}
\end{align}
To find other extremal elements when necessary, we specialize to the various $\mathfrak{g}$.
\medskip

\textbf{Case $\mathfrak{g}=\mathfrak{sl}_{n}(K)$.} Using \eqref{eq:extr1} and \eqref{eq:extr22}, an extremal basis is
\begin{equation*}
B=\{e_{ij}\ |\ i,j\leq n,\ i\neq j\}\cup\{e_{ii}+e_{i(i+1)}-e_{(i+1)i}-e_{(i+1)(i+1)}\ |\ i<n\}.
\end{equation*}
Fixing $b_{1}=e_{12}$, we can check by direct computation that the function $\mu_{b}$ defined as $\mu_{b}(z)=[[z,b_{1}],[z,b]]$ is not identically $0$ for any $b\neq b_{1}$: for instance, if $z_{0}$ has $1-n$ at the $(n,n)$-th coordinate and $1$ everywhere else, then $\mu_{b}(z_{0})\neq 0$ for all $b$ at the same time.
\medskip

\textbf{Case $\mathfrak{g}=\mathfrak{sp}_{2n}(K)$.} Using \eqref{eq:extr1}, \eqref{eq:extr4} and \eqref{eq:extr22}, an extremal basis is
\begin{align*}
B= & \ \{e_{i(i+n)}\ |\ i\leq n\}\cup\{e_{(i+n)i}\ |\ i\leq n\} \\
 & \ \cup\{e_{i(i+n)}+e_{i(j+n)}+e_{j(i+n)}+e_{j(j+n)}\ |\ i<j\leq n\} \\
 & \ \cup\{e_{(i+n)i}+e_{(i+n)j}+e_{(j+n)i}+e_{(j+n)j}\ |\ i<j\leq n\} \\
 & \ \cup\{e_{ij}+e_{i(i+n)}-e_{(j+n)j}-e_{(j+n)(i+n)}\ |\ i,j\leq n\}.
\end{align*}
Fixing $b_{1}=e_{1(n+1)}$, it is again easy to check that $\mu_{b}$ is not identically $0$, say using $z_{0}$ having $1$ at any $(i,j)$-th coordinate with either $i\leq n$ or $j\leq n$, and $-1$ when $i,j>n$.
\medskip

\textbf{Case $\mathfrak{g}=\mathfrak{so}_{2n}(K)$.} New extremal elements emerge in this case. For any $i,j\leq n$ with $i\neq j$, the following are extremal:
\begin{align*}
b= & \ e_{ij}-e_{(j+n)(i+n)} & \text{with }\lambda_{b}(z)= & \ -2z_{ji}, \\
b= & \ e_{i(j+n)}-e_{j(i+n)} & \text{with }\lambda_{b}(z)= & \ -2z_{(j+n)i}, \\
b= & \ e_{(i+n)j}-e_{(j+n)i} & \text{with }\lambda_{b}(z)= & \ -2z_{j(i+n)}, \\
b= & \ e_{ii}-e_{(i+n)(i+n)}+e_{jj}-e_{(j+n)(j+n)} & \text{with }\lambda_{b}(z)= & \ -2(z_{ii}+z_{jj} \\
 & \ +e_{i(j+n)}-e_{j(i+n)}+e_{(i+n)j}-e_{(j+n)i} & & \ -z_{i(j+n)}-z_{(i+n)j}).
\end{align*}
Then, provided that $n\geq 3$, an extremal basis is given by
\begin{align*}
B= & \ \{e_{ij}-e_{(j+n)(i+n)}\ |\ i,j\leq n,\ i\neq j\} \\
 & \ \cup\{e_{i(j+n)}-e_{j(i+n)}\ |\ i<j\leq n\}\cup\{e_{(i+n)j}-e_{(j+n)i}\ |\ i<j\leq n\} \\
 & \ \cup\{e_{11}-e_{(n+1)(n+1)}+e_{ii}-e_{(i+n)(i+n)} \\
 & \ \ \ \ \ +e_{1(i+n)}-e_{i(n+1)}+e_{(n+1)i}-e_{(i+n)1}\ |\ 1<i\leq n\} \\
 & \ \cup\{e_{22}-e_{(n+2)(n+2)}+e_{33}-e_{(n+3)(n+3)} \\
 & \ \ \ \ \ +e_{2(n+3)}-e_{3(n+2)}+e_{(n+2)3}-e_{(n+3)2}\}.
\end{align*}
Taking $b_{1}=e_{1(n+2)}-e_{2(n+1)}$, for any $b\neq b_{1}$ we have $\mu_{b}(z_{0})\neq 0$ where $z_{0}$ is the matrix having $1$ at the $(i,j)$-th coordinates with $i,j\leq n$ or $i<n<j-i$ or $j<n<i-j$, having $0$ at the coordinates with $i=j+n$ or $j=i+n$, and having $-1$ everywhere else.
\medskip

\textbf{Case $\mathfrak{g}=\mathfrak{so}_{2n+1}(K)$.} First of all, we embed $\mathfrak{so}_{2n}(K)$ inside $\mathfrak{g}$ by sending $z$ to the matrix of $\mathfrak{g}$ with $z$ in the top left corner and $0$ everywhere else. The images of the elements of the extremal basis of $\mathfrak{so}_{2n}(K)$ found above are still extremal in $\mathfrak{g}$. Furthermore, for any $i,j\leq n$ with $i\neq j$, the following are extremal:
\begin{align*}
b= & \ 2e_{ij}-2e_{(j+n)(i+n)}+e_{j(i+n)}-e_{i(j+n)} & \text{with }\lambda_{b}(z)= & \ -2(2z_{ji}-z_{(j+n)i} \\
 & \ +2e_{i(2n+1)}-2e_{(2n+1)(i+n)} & & \ -2z_{(i+n)(2n+1)}), \\
b= & \ 2e_{ij}-2e_{(j+n)(i+n)}+e_{(j+n)i}-e_{(i+n)j} & \text{with }\lambda_{b}(z)= & \ -2(2z_{ji}+z_{i(j+n)} \\
 & \ +2e_{(2n+1)j}-2e_{(j+n)(2n+1)} & & \ +2z_{j(2n+1)}).
\end{align*}
Then, provided that $n\geq 3$, an extremal basis is given by the union of the image of the basis of $\mathfrak{so}_{2n}(K)$ with
\begin{align*}
& \{2e_{i1}-2e_{(n+1)(i+n)}+e_{1(i+n)}-e_{i(n+1)}+2e_{i(2n+1)}-2e_{(2n+1)(i+n)}\ |\ 1<i\leq n\} \\
& \cup\{2e_{1i}-2e_{(i+n)(n+1)}+e_{(i+n)1}-e_{(n+1)i}+2e_{(2n+1)i}-2e_{(i+n)(2n+1)}\ |\ 1<i\leq n\} \\
& \cup\{2e_{12}-2e_{(n+2)(n+1)}+e_{2(n+1)}-e_{1(n+2)}+2e_{1(2n+1)}-2e_{(2n+1)(n+1)}\} \\
& \cup\{2e_{21}-2e_{(n+1)(n+2)}+e_{(n+1)2}-e_{(n+2)1}+2e_{(2n+1)1}-2e_{(n+1)(2n+1)}\}.
\end{align*}
As in the previous case $\mu_{b}$ is not identically $0$, taking as $b_{1}$ the image of the element $b_{1}$ of $\mathfrak{so}_{2n}(K)$ and as $z_{0}$ the matrix having the same entries as the $z_{0}$ of $\mathfrak{so}_{2n}(K)$ (for $i,j\leq 2n$) and in addition $1$ at the $(i,2n+1)$-th coordinates, $-1$ at the $(2n+1,j)$-th coordinates, and $0$ at the $(2n+1,2n+1)$-th coordinate.
\medskip

Now we pass to the other classical Lie algebras. We make use of the explicit constructions contained in \cite[\S 22]{FH04} (for $\mathfrak{g}_{2}$) and \cite[\S 3]{HRT01} (for the others). The author verified the claims via the open-source software SageMath \cite{Sag19}, version 8.9: this passage was inescapable particularly for $\mathfrak{e}_{8}$, given its size. The programs used for the verification process can be found on GitHub \cite{Don22-s}.
\medskip

\textbf{Case $\mathfrak{g}=\mathfrak{g}_{2}(K)$.} This algebra has dimension $14$. An explicit multiplication table for $\mathfrak{g}_{2}$ is given in \cite[Table~22.1]{FH04}. Following the notation therein, an extremal basis $B$ is given by
\begin{align*}
b_{1} & =X_{2}, & b_{6} & =Y_{6}, & b_{11} & =X_{1}+Y_{3}+X_{5}-Y_{6}, \\
b_{2} & =Y_{2}, & b_{7} & =H_{2}+X_{2}-Y_{2}, & b_{12} & =Y_{1}+X_{2}-Y_{4}+Y_{6}, \\
b_{3} & =X_{5}, & b_{8} & =H_{1}+H_{2}+X_{5}-Y_{5}, & b_{13} & =Y_{1}-X_{2}+Y_{4}+Y_{6}, \\
b_{4} & =Y_{5}, &
b_{9} & =X_{1}+Y_{2}-X_{4}+X_{6}, & b_{14} & =Y_{1}+X_{3}+Y_{5}-X_{6}, \\
b_{5} & =X_{6}, & b_{10} & =X_{1}-Y_{2}+X_{4}+X_{6}, & &
\end{align*}
and the result holds for this $B$ and this $b_{1}$.
\medskip

\textbf{Case $\mathfrak{g}=\mathfrak{f}_{4}(K)$.} This algebra has dimension $52$. We use the $8$ generators of the $26$-dimensional representation defined in \cite[\S 3.5]{HRT01}. First we build a basis, not necessarily extremal. We denote the $8$ generators by $a_{1},\ldots,a_{8}$ (in the order they are presented in the paper). For brevity below, a string ``$n_{1}n_{2}\ldots n_{k}^{\alpha}$'' represents the element $\frac{1}{\alpha}[\ldots[[a_{n_{1}},a_{n_{2}}],a_{n_{3}}]\ldots,a_{n_{k}}]$, where $n_{i}\in\{1,\ldots,8\}$ and $\alpha\in\mathbb{Z}$. The other elements of the basis $a_{9},\ldots,a_{52}$ are defined in order by the strings
\begin{align*}
& 1 2^{1}, 1 5^{1}, 2 3^{1}, 2 6^{1}, 3 4^{1}, 3 7^{1}, 4 8^{1}, 5 6^{1}, 6 7^{1}, 7 8^{1}, 1 2 3^{1}, 2 3 2^{2}, 2 3 4^{1}, 5 6 7^{1}, 6 7 6^{2}, 6 7 8^{1}, \\
& 1 2 3 2^{1}, 1 2 3 4^{1}, 2 3 2 4^{2}, 5 6 7 6^{1}, 5 6 7 8^{1}, 6 7 6 8^{2}, 1 2 3 2 1^{2}, 1 2 3 2 4^{1}, 2 3 2 4 3^{2}, 5 6 7 6 5^{2}, 5 6 7 6 8^{1}, \\
& 6 7 6 8 7^{2}, 1 2 3 2 1 4^{2}, 1 2 3 2 4 3^{1}, 5 6 7 6 5 8^{2}, 5 6 7 6 8 7^{1}, 1 2 3 2 1 4 3^{2}, 1 2 3 2 4 3 2^{1}, 5 6 7 6 5 8 7^{2}, \\
& 5 6 7 6 8 7 6^{1}, 1 2 3 2 1 4 3 2^{2}, 5 6 7 6 5 8 7 6^{2}, 1 2 3 2 1 4 3 2 2^{4}, 5 6 7 6 5 8 7 6 6^{4}, 1 2 3 2 1 4 3 2 2 3^{4}, \\
& 5 6 7 6 5 8 7 6 6 7^{4}, 1 2 3 2 1 4 3 2 2 3 4^{4}, 5 6 7 6 5 8 7 6 6 7 8^{4}.
\end{align*}
From these we build an extremal basis $B$. The elements $b_{1},\ldots,b_{24}$ of this basis are the $a_{i}$ for $i$ in the set
\begin{equation*}
\{3,4,7,8,13,18,20,23,27,30,31,33,34,36,37,39,41,43,47,48,49,50,51,52\},
\end{equation*}
the elements $b_{25},\ldots,b_{50}$ are equal to $a_{i_{1}}+a_{i_{2}}-a_{i_{3}}$ for $(i_{1},i_{2},i_{3})$ in the set
\begin{align*}
\{ & (1, 30, 37), (2, 18, 27), (3, 11, 20), (3, 14, 7), (3, 16, 34), (3, 23, 6), (3, 31, 19), \\
& (3, 35, 52), (3, 51, 38), (4, 15, 8), (4, 21, 33), (4, 22, 43), (4, 28, 48), (4, 36, 17), \\
& (4, 41, 26), (4, 47, 32), (5, 27, 39), (7, 9, 31), (7, 52, 40), (8, 24, 36), (8, 25, 47), \\
& (8, 43, 29), (20, 42, 51), (23, 44, 52), (31, 51, 45), (34, 52, 46)\},
\end{align*}
and finally $b_{51}=a_{12}+a_{14}+a_{20}-a_{23}$ and $b_{52}=a_{10}+a_{12}+a_{14}+a_{31}-a_{34}$. The result holds for this $B$ and this $b_{1}$.
\medskip

\textbf{Case $\mathfrak{g}=\mathfrak{e}_{6}(K)$.} This algebra has dimension $78$. We use the $12$ generators of the $27$-dimensional representation defined in \cite[\S 3.3]{HRT01}: $6$ of them are explicitly given there (we denote them by $b_{1},\ldots,b_{6}$), and $6$ are their transposes (we denote them by $b_{\overline{1}},\ldots,b_{\overline{6}}$). The strings (all with $\alpha=1$)
\begin{align*}
& 1 3, 2 4, 3 4, 4 5, 5 6, 1 3 4, 2 4 3, 2 4 5, 3 4 5, 4 5 6, 1 3 4 2, 1 3 4 5, 2 4 3 5, 2 4 5 6, 3 4 5 6, 1 3 4 2 5, 1 3 4 5 6, \\
& 2 4 3 5 4, 2 4 3 5 6, 1 3 4 2 5 4, 1 3 4 2 5 6, 2 4 3 5 4 6, 1 3 4 2 5 4 3, 1 3 4 2 5 4 6, 2 4 3 5 4 6 5, 1 3 4 2 5 4 3 6, \\
& 1 3 4 2 5 4 6 5, 1 3 4 2 5 4 3 6 5, 1 3 4 2 5 4 3 6 5 4, 1 3 4 2 5 4 3 6 5 4 2
\end{align*}
define $30$ more elements $b_{7},\ldots,b_{36}$, and they also define $30$ elements $b_{\overline{7}},\ldots,b_{\overline{36}}$ if we use the transposes to construct them. Finally, we get $6$ elements $b'_{i}$ using Lemma~\ref{le:extrbra} on the pairs $(b_{i},b_{\overline{i}})$ for $1\leq i\leq 6$. The set $B$ of all $b_{i},b_{\overline{i}},b'_{i}$ is an extremal basis, and the result holds for this $B$ and this $b_{1}$.
\medskip

\textbf{Case $\mathfrak{g}=\mathfrak{e}_{7}(K)$.} This algebra has dimension $133$. We use the $14$ generators of the $56$-dimensional representation defined in \cite[\S 3.2]{HRT01}: $7$ of them are explicitly given there (we denote them by $b_{1},\ldots,b_{7}$), and $7$ are their transposes (we denote them by $b_{\overline{1}},\ldots,b_{\overline{7}}$). The strings used for $\mathfrak{e}_{6}$, together with the strings (all with $\alpha=1$)
\begin{align*} 
& 6 7,  5 6 7, 4 5 6 7, 2 4 5 6 7, 3 4 5 6 7, 1 3 4 5 6 7, 2 4 3 5 6 7, 1 3 4 2 5 6 7, 2 4 3 5 4 6 7, 1 3 4 2 5 4 6 7, 2 4 3 5 4 6 5 7, \\
& 1 3 4 2 5 4 3 6 7, 1 3 4 2 5 4 6 5 7, 2 4 3 5 4 6 5 7 6, 1 3 4 2 5 4 3 6 5 7, 1 3 4 2 5 4 6 5 7 6, 1 3 4 2 5 4 3 6 5 4 7, \\
& 1 3 4 2 5 4 3 6 5 7 6, 1 3 4 2 5 4 3 6 5 4 2 7, 1 3 4 2 5 4 3 6 5 4 7 6, 1 3 4 2 5 4 3 6 5 4 2 7 6, 1 3 4 2 5 4 3 6 5 4 7 6 5, \\
& 1 3 4 2 5 4 3 6 5 4 2 7 6 5, 1 3 4 2 5 4 3 6 5 4 2 7 6 5 4, 1 3 4 2 5 4 3 6 5 4 2 7 6 5 4 3, 1 3 4 2 5 4 3 6 5 4 2 7 6 5 4 3 1,
\end{align*}
define $56$ more elements $b_{8},\ldots,b_{63}$, and they also define $56$ elements $b_{\overline{8}},\ldots,b_{\overline{63}}$ if we use the transposes to construct them. Finally, we get $7$ elements $b'_{i}$ using Lemma~\ref{le:extrbra} on the pairs $(b_{i},b_{\overline{i}})$ for $1\leq i\leq 7$. The set $B$ of all $b_{i},b_{\overline{i}},b'_{i}$ is an extremal basis, and the result holds for this $B$ and this $b_{1}$.
\medskip

\textbf{Case $\mathfrak{g}=\mathfrak{e}_{8}(K)$.} This algebra has dimension $248$. SageMath can produce an explicit basis of the image of the $248$-dimensional adjoint representation of $\mathfrak{g}$, based on the construction in \cite[\S 3.1]{HRT01}: the command
\begin{center}
\verb|LieAlgebra(ZZ,cartan_type=['E',8],representation='matrix').basis()|
\end{center}
yields a family of sparse matrices, which we denote by $a_{1},\ldots,a_{248}$ (in the order they are listed by the program; we have written them down explicitly in \cite{Don22-s}). Almost all of them are already extremal, except the $8$ elements $a_{i}$ with $i$ in $I=\{121,\ldots,128\}$. We define $b_{i}:=a_{i}$ for $i\notin I$, and we note that $a_{i}=[a_{247-i},a_{129-i}]$ for all $i\in I$; then we obtain $b_{i}$ for $i\in I$ using Lemma~\ref{le:extrbra}. The newly obtained set $B$ is still a basis: in fact, for each $i\in I$ the element $b_{i}$ is in the span of $\{a_{j}|j\notin I\}\cup\{a_{i}\}$ and not in the span of $\{a_{j}|j\notin I\}$, therefore $B$ spans the same space as the original basis. The result holds for this $B$ and this $b_{1}$.

\subsection{Further generalizations}\label{se:appfurther}

We spend a few words on possible generalizations of the results, and obstacles that may lie along the way.
\smallskip

\textbf{Theorem~\ref{th:main} with smaller $k$.} We are essentially able to perform our dimensional descent because, for a $K$-vector subspace $V\subseteq\mathfrak{g}$, there is a linear homogeneous map $f$ such that $f(x_{1})=0$ and $f(x_{2})\neq 0$ for two points $x_{1},x_{2}\in V$. How quickly we find $f$ in relation to our set of generators directly impacts how large $k$ is in Theorem~\ref{th:main}.

To produce $f$, we used extremal elements and their properties. This makes for a fairly general argument that covers the more special cases $\mathfrak{e}_{6},\mathfrak{e}_{7},\mathfrak{e}_{8},\mathfrak{f}_{4},\mathfrak{g}_{2}$. However, now that Theorem~\ref{th:main} is proved, we can focus on $\mathfrak{sl}_{n},\mathfrak{so}_{n},\mathfrak{sp}_{2n}$ alone and improve the dependence of $k$ on $\dim(\mathfrak{g})$ without worrying about the other algebras. Extremal elements are in no way the only route to Proposition~\ref{pr:descent}, nor the fastest.

With a more ad hoc argument for each $\mathfrak{g}$, eliminating the logarithm factor in the exponent of $k$ is entirely possible: this improvement already beats the current best bounds for the case of groups. Any improvement on $k$ in Theorem~\ref{th:main} then leads directly to an improvement on $C$ in Corollary~\ref{co:diam}.

One may wonder how further $k$ can be improved. It would be quite significant to show that $k$ may be taken with an exponent of order $o(\sqrt{\dim(\mathfrak{g})})$: in fact, there are explicit counterexamples to product theorems of equivalent strength for groups of Lie type (see \cite[Ex.~77]{PS16}), so it would become apparent that the situation in the case of Lie algebras is fundamentally different from that of groups.
\smallskip

\textbf{Theorem~\ref{th:main} for other simple Lie algebras.} Resorting to extremal elements is quite useful for the classical Lie algebras. There might be some margin left for simple non-classical Lie algebras as well: for $K$ algebraically closed with $\mathrm{char}(K)>5$ any finite-dimensional Lie algebra has extremal elements \cite[Thm.~1]{Pre87}, and for $K$ arbitrary with $\mathrm{char}(K)\notin\{2,3\}$ the presence of extremal non-sandwich elements in a simple Lie algebra $\mathfrak{g}$ implies that $\mathfrak{g}$ is generated by extremal elements (with one exception, see \cite[Thm.~1.1]{CIR08}), and thus spanned by them too \cite[Lemma~2.4]{CSUW01}.

We would want to avoid sandwich elements however, as we did with Theorem~\ref{th:extremalbasis}\eqref{th:extremalbasisnosand}. This may be possible, but not so immediate: for $K$ algebraically closed with $\mathrm{char}(K)>5$, the only simple Lie algebras without sandwich elements are the classical ones \cite[Thm.~3]{Pre87}.

Alternatively, one might be able to work case by case on the non-classical algebras. Of course, this is possible only to the extent that a classification of simple Lie algebras is known. The work is monumental already for $K$ algebraically closed with $\mathrm{char}(K)\geq 5$, and it bears the name of \textit{Block-Wilson-Strade-Premet classification theorem}: see \cite{Str17} and the subsequent volumes.
\smallskip

\textbf{Theorem~\ref{th:main} for other algebras.} As already mentioned, our descent works by finding a linear homogeneous map $f$ with $f(x_{1})=0$ and $f(x_{2})\neq 0$. In the case of Lie algebras the first condition is automatically ensured by taking $f=[\cdot,x_{1}]$, and the second is true for classical Lie algebras (up to few manipulations) thanks to the reasoning of \S\ref{se:generic}.

For more general algebras, one may require the bracket to be an alternating form (as is the case for Mal'cev algebras) to grant at least the first condition in the same way as for Lie algebras. Algebras on the other hand of the spectrum, say those in which $[\cdot,x]$ is bijective, are likely to be out of the reach of our methods.
\smallskip

\textbf{Theorem~\ref{th:main}\eqref{th:mainstuck}-\eqref{th:mainbig} for $\mathrm{char}(K)$ small.} The condition $\mathrm{char}(K)\geq 3\dim(\mathfrak{g})$ originates from an escape step inside Corollary~\ref{co:generic}. More precisely, we know that the sets $V_{i},V'_{i}$ are not the whole $\mathfrak{g}$, but we need enough points on a line to ensure that we quickly escape from their union.

In a general sense, this sort of restriction is necessary: if $|K|$ is small and $V$ is a $K$-vector space, we can certainly arrange a number of proper subspaces of $V$ to cover the entire $V$ (see for instance \cite{Kha09}). On the other hand, one might be able to show that we can always avoid these degenerate cases in Corollary~\ref{co:generic}: this seems feasible, though convolute. It is not enough to prove that the union is proper: with careful additional choices, the arguments of Appendix~\ref{se:appextr} can already do that. One needs to prove more strongly that we can concretely escape in a bounded number of steps, regardless of the original $A$ we have at hand.

An escape argument as in \cite[\S 3]{BDH21}, i.e.\ relying on algebraic geometry, is capable of that but it is much slower: the dependence of the number of steps on $\dim(\mathfrak{g})$ becomes exponential, rather than polynomial. Alternatively, the same ideas that might quicken the search for $f$ and improve the value of $k$ (as discussed at the beginning of this subsection) would probably improve the restriction on $\mathrm{char}(K)$ as well, by the very nature of the method. In the most optimistic scenario, the only restriction left would be $\mathrm{char}(K)\neq 2$.
\smallskip

\textbf{Corollary~\ref{co:diam} for Lie algebras over $\mathbb{F}_{q}$.} As mentioned in \S\ref{se:introover}, the idea of passing from $\mathfrak{g}$ to a copy of $K$ and using sum-product estimates for $K$ comes from the proof of a diameter bound for $\mathrm{SL}_{2}(\mathbb{F}_{p})$ due to Helfgott \cite{Hel08}. Shortly after, Dinai \cite{Din11} generalized the proof to $\mathrm{SL}_{2}(\mathbb{F}_{q})$. It would be worthwhile to do the same for Lie algebras.

Here is a potential obstacle. In Dinai's proof, a set $A\subseteq K$ large enough may not grow because of the second alternative of Theorem~\ref{th:sumprodAv}, or rather of \cite[Thm.~2.55]{TV06}: there may be a subfield $K'\leq K$, an element $x\in K$, and a set $X\subseteq K$ with $A\subseteq xK'\cup X$, $|K'|\leq|A|^{1+\varepsilon}$, and $|X|\leq|A|^{\varepsilon}$. The trick to get out of the impasse is the following: since our copy of $K$ is in fact the set of possible traces, we first show that $A\subseteq K'$ cannot happen directly, up to taking a small power of $A$ (as in \cite[Cor.~2.33]{Din11}). Then, once there is an element of $A$ outside any subfield, we may use
\begin{equation*}
\mathrm{tr}(g)\mathrm{tr}(h)=\mathrm{tr}(gh)+\mathrm{tr}(gh^{-1})
\end{equation*}
to produce many such elements, and avoid that $A\subseteq xK'\cup X$ for such a small $X$.

In the second respect, Lie algebras are even more convenient: if $a\notin K'$ and $B\subseteq K'$, we have directly $(a+B)\cap K'=\emptyset$, so a simple sum $A+A$ is enough to break the possibility of $A\subseteq xK'\cup X$. However, there is no guarantee that $A\subseteq K'$ cannot happen. Unlike with groups and traces, there is no ``canonical'' copy of $K$ that we are working with: any $1$-dimensional vector space $Kv$ may become the endpoint of Theorem~\ref{th:onedim}, and there is not even a canonical way to identify $Kv$ with $K$, since it all depends on the choice of $v$.

For instance, the set
\begin{align*}
A & =\mathbb{F}_{p}a\cup\mathbb{F}_{p}b\cup\mathbb{F}_{p}c, & a & =\begin{pmatrix}1 & 0 \\ 0 & -1\end{pmatrix}, & b & =\begin{pmatrix}0 & \omega \\ 0 & 0\end{pmatrix}, & c & =\begin{pmatrix}0 & 0 \\ \omega & 0\end{pmatrix},
\end{align*}
inside $\mathfrak{g}=\mathfrak{sl}_{2}(\mathbb{F}_{q})$ generates the whole $\mathfrak{g}$ if $\omega$ is not contained in any subfield of $\mathbb{F}_{q}$. However, for every $v\in\mathfrak{g}$, the set $A\cap\mathbb{F}_{q}v$ is either $\{0\}$ or a copy of $\mathbb{F}_{p}$. We can still fill $\mathfrak{g}$ in a number of steps compatible with Corollary~\ref{co:diam}, using for example
\begin{align*}
[b,c] & =\omega^{2}a, & [[[b,c],b],c] & =2\omega^{4}a, & [[[[[b,c],b],c],b],c] & =4\omega^{6}a, & \ldots
\end{align*}
to fill $\mathbb{F}_{q}a$ quickly: but this works exactly because we keep returning to the \textit{same} copy of $\mathbb{F}_{q}$. It is not at all obvious that this phenomenon occurs in general: while $\mathfrak{sl}_{2}$-pairs are fairly common inside semisimple Lie algebras (by the Jacobson-Morozov theorem, see for instance \cite[\S III.11, Thm.~17]{Jac79} and \cite[Prop.~2.1]{CIR08}), nothing guarantees that one of them is contained in a small power of $A$. On the other hand, there is also no reason why working with $\mathfrak{sl}_{2}$-pairs would be the \textit{only} way to fill $\mathfrak{g}$. It would be quite interesting to investigate either issue.
\smallskip

\textbf{Sumset theory for Lie algebras.} As mentioned in \S\ref{se:prelfields}, sum-product estimates can be expressed through sets of different shape, and then translated into the canonical estimate for $\max\{|A+A|,|AA|\}$. Similarly, in the case of groups, from a bound of the form $|A^{k}|\geq|A|^{1+\varepsilon}$ it is easy to obtain $|A^{3}|\geq|A|^{1+\varepsilon'}$ as well.

Both of the above are consequences of elementary results in sumset theory, mainly developed by Ruzsa. The triangle inequality
\begin{equation}\label{eq:ruzsatriangle}
|AC^{-1}|\cdot|B|\leq|AB^{-1}|\cdot|BC^{-1}|,
\end{equation}
valid for any $A,B,C$ finite inside an arbitrary group (see \cite[Lemma~4.2]{Ruz96} or \cite[Lemma~2.6]{TV06} for the abelian case, and \cite[Lemma~2.1]{Hel08} in general), yields countless such manipulations. There is no sumset theory in our paper: our Lemma~\ref{le:linsize} falls short of reaching even the trivial bound $|A^{k}|\geq k$ valid for groups.

It would be interesting to develop a sumset theory for Lie algebras, or non-associative algebras in general: the main results of this paper would become more flexible, allowing for wider ranges of exponents and other manipulations. There is hope: as shown in \cite[Lemma~4.2]{GHR15}, \eqref{eq:ruzsatriangle} can already be extended to certain structures weaker than groups, mutatis mutandis.

\section*{Acknowledgements}

The author thanks his supervisor A.~Shalev for suggesting the problem of growth in non-associative algebras and for his patient reading and advice, and I.~D.~Shkredov for helpful remarks on sum-product theorems. The author is grateful to the community of MathOverflow for attracting his attention to the topic of extremal elements (via question 416738).

The author was supported by the Israel Science Foundation Grants No. 686/17 and 700/21 of A.~Shalev, and the Emily Erskine Endowment Fund; he has been a postdoc at the Hebrew University of Jerusalem under A.~Shalev in 2020/21 and 2021/22.

\bibliography{Bibliography}
\bibliographystyle{alpha}

\end{document}